\newtheorem{theorem}{Theorem}[section]
\newtheorem{lemma}[theorem]{Lemma}
\newtheorem{proposition}[theorem]{Proposition}
\newtheorem{corollary}[theorem]{Corollary}
\DeclarePairedDelimiter{\ceil}{\lceil}{\rceil}
\DeclarePairedDelimiter{\floor}{\lfloor}{\rfloor}
\newcommand{\hidethis}[1]{}
\theoremstyle{definition}
\newtheorem{definition}[theorem]{Definition}
\title{Extremal Problems Related to the Cardinality-Redundance of Graphs}
\author[1]{Daniel McGinnis}
\author[2]{Nathan Shank}
\affil[1]{New College of Florida (daniel.mcginnis15@ncf.edu)}
\affil[2]{Moravian College (shankn@moravian.edu)}
\date{}
\begin{document}
\maketitle

\begin{abstract}
A dominating set of a graph $G$ is a set of vertices $D$ such that for all $v \in V(G)$, either $v \in D$ or $[v,d] \in E(G)$ for some $d \in D$. The cardinality-redundance of a vertex set $S$, $CR(S)$, is the number of vertices $x\in V(G)$ such that $|N[x] \cap S| \geq 2$. The cardinality-redundance of $G$ is the minimum of $CR(S)$ taken over all dominating sets $S$. A set of vertices $S$ such that $CR(S)=CR(G)$ is a $\gamma_{CR}$-set, and the size of a minimum $\gamma_{CR}$-set is denoted $\gamma_{CR}(G)$. Here, we are concerned with extremal problems concerning cardinality-redundance. We give the maximum number of edges in a graph with a given number of vertices and given cardinality-redundance. In the cases that $CR(G)=0$ or $1$ we give the minimum and maximum number of edges of graphs when $\gamma_{CR}(G)$ is fixed, and when $CR(G)=2$ we give the maximum number edges of graphs where $\gamma_{CR}(G)$ is fixed. We give the minimum and maximum values of $\gamma_{CR}(G)$ when the number of edges are fixed and $CR(G)=0,1$, and we give the maximum values of $\gamma_{CR}(G)$ when the number of edges are fixed and $CR(G)=2$.

\end{abstract}

\section{Introduction}

Assume throughout that $G$ is a simple graph with vertex set $V(G)$ and edge set $E(G)$. Two vertices $u,v\in V(G)$ are said to be adjacent, or connected, if $[u,v]\in E(G)$. The \textit{open neighborhood} of a vertex $u\in V(G)$, denoted by $N(u)$, is defined as $N(u)=\{v\in V(G)\ :\ \textrm{$v$ is adjacent to $u$}\}$, and the \textit{degree}, $deg(u)$, is $deg(u)=|N(u)|$. The \textit{closed neighborhood} of $u$, $N[u]$, is defined as $N[u]=N(u)\cup \{u\}$.  If $v\in N[u]$, we will say that $u$ \textit{dominates} $v$ or $v$ is \textit{dominated} by $u$. Similarly, for a set $D \subseteq V(G)$, the \textit{open neighborhood} of $D$ is $N(D)=\cup_{d\in D}N(d)$ and the \textit{closed neighborhood} of $D$ is $N[D]=N(D)\cup D$. A vertex $v\in V(G)$ is said to be a \textit{private neighbor} a vertex $d$ in a set $D$ if $N[v]\cap N[D]=\{d\}$, $v$ is a \textit{shared neighbor} of $D$ if $|N[v]\cap N[D]|\geq 2$. First defined by Slater and Grinstead \cite{grinsteadfractional}, the \textit{influence} of $D$ is $I(D)=\sum_{v\in D} |N[v]|$ which is equivalent to $I(D) = \sum_{v\in V(G)} |N[v]\cap D|$. A set $S\subseteq V(G)$ is said to be a \textit{dominating set} if $N[S]=V(G)$. A comprehensive survey of domination in graphs can be found in \cite{fundamentalshaynes}. 

A vertex $u$ is said to be \textit{overdominated by $S$} if $|N[u]\cap S|\geq 2$, and if $S$ is understood from context, we will just say that $u$ is overdominated. The \textit{cardinality-redundance of $S$}, $CR(S)$, is the number of vertices of $G$ that are overdominated by $S$. The \textit{cardinality-redundance of $G$}, $CR(G)$, is the minimum of $CR(S)$ taken over all dominating sets $S$. Here, we define the parameter $\gamma_{CR}(G)$ to be the minimum size of a dominating set $S$ that satisfies $CR(S)=CR(G)$; $S$ is then called a $\gamma_{CR}$-set.  The concept of cardinality-redundance was first introduced in \cite{johnsonmaximum}, in which the authors were primarily concerned with the computation complexity of finding the minimum possible cardinality-redundance of a maximum independent set in general graphs and in series-parallel graphs. In this paper, we are more concerned about the structure of graphs satisfying certain properties related to cardinality redundnace, and constructions of graphs achieving certain extremal conditions related to cardinality-redundance.

Many extremal problems related to various domination parameters have been studied in the literature; the papers  \cite{asplundsome}, \cite{dankelmannmaximum}, \cite{desormeauxextremal}, \cite{duttonextremal}, \cite{fischermannmaximum}, \cite{henninggraphs}, and \cite{joubertmaximum} are a few examples.
In this paper we consider extremal problems related to the parameters $CR(G)$, $\gamma_{CR}(G)$, $|V(G)|$, and $|E(G)|$ when some are varied and some are fixed. In particular, we are interested in finding the maximum and minimum size of a graph $G$ when the values $|V(G)|$, $CR(G)$, and $\gamma_{CR}(G)$ are fixed. We find these values when $CR(G)=0$ or $1$, and we find the maximum size of $G$  when $CR(G)=2$. We also consider the maximum and minimum values of $\gamma_{CR}(G)$ when $|V(G)|$, $CR(G)$ and $|E(G)|$ are given. We find these values when $CR(G)=0$ or $1$, and we find the maximum of $\gamma_{CR}(G)$ when $CR(G)=2$. Furthermore, we find the maximum size of $G$ when $CR(G)$ is arbitrary, $|V(G)|$ is given, and $\gamma_{CR}(G)$ is allowed to vary.

\section{Definitions and Preliminary Results}

Throughout this paper, we will assume $n$ and $r$ are positive integers, and $m$ and $k$ will be a nonnegative integers. We will primarily be concerned with the values presented in Definitions \ref{def1}-\ref{def4} below.  We will use the standard notation and denote the set of all graphs of order $n$ and size $m$ by $G(n,m)$.

\begin{definition}\label{def1}
For $k, r, $ and $n$ with $k \leq n$ and $r \leq n$, let 
\[
M(n,k,r)=max\{|E(G)|\ :\ |V(G)|=n,\ CR(G)=k,\ \textrm{and } \gamma_{CR}(G)=r\}.
\]

If there does not exist a graph $G$ such that $|V(G)|=n$, $CR(G)=k$, and $\gamma_{CR}(G)=r$, then $M(n,k,r)=0$.
\end{definition}

Thus $M(n,k,r)$ is the maximum size of a graph $G$ of order $n$ where $CR(G) = k$ and $\gamma_{CR}(G) = r$. 

\begin{definition}
For $k, r, $ and $n$ with $k \leq n$ and $r \leq n$, let 
\[
m(n,k,r)=min\{|E(G)|\ :\ |V(G)|=n,\ CR(G)=k,\ \textrm{and } \gamma_{CR}(G)=r\}.
\]
If there does not exist a graph $G$ such that $|V(G)|=n$, $CR(G)=k$, and $\gamma_{CR}(G)=r$, then $m(n,k,r)=0$.
\end{definition}

Thus $m(n,k,r)$ is the minimum size of a graph $G$ of order $n$ where $CR(G) = k$ and $\gamma_{CR}(G) = r$.

\begin{definition}
For $k\leq n$ and $m \leq {n \choose 2}$, let
\[
D(n,k,m)=max\{\gamma_{CR}(G)\ :\ G \in G(n,m),\ \textrm{and}\ CR(G)=k\}.
\]
If there does not exist a graph $G$ such that $G\in G(n,m)$ and $CR(G)=k$, then $D(n,k,m)=0$.
\end{definition}

Thus $D(n,k,m)$ is the maximum value of $\gamma_{CR}(G)$ over all graphs with order $n$ and size $m$ whose cardinality-redundance is $k$.  

\begin{definition}\label{def4}
For $k\leq n$ and $m \leq {n \choose 2}$, let
\[
d(n,k,m)=min\{\gamma_{CR}(G)\ :\ G \in G(n,m),\ \textrm{and}\ CR(G)=k\}.
\]
If there does not exist a graph $G$ such that $G\in G(n,m)$ and $CR(G)=k$, then $d(n,k,m)=0$.
\end{definition}

Thus $d(n,k,m)$ is the minimum value of $\gamma_{CR}(G)$ over all graphs with order $n$ and size $m$ whose cardinality-redundance is $k$.  So $D(n,k,m)$ is a maximum of minimum over a set and $d(n,k,m)$ is a minimum of minimum over the same set. 

We now give some simple results on $\gamma_{CR}$-sets and cardinality-redundance which will help us bound $CR(G)$. Note that a dominating set $S$ is minimal if for every vertex $s \in S$ the set $S \setminus\{s\}$ is not a dominating set.  

\begin{proposition}\label{minimalgamma}
Let $G$ be a graph. Every $\gamma_{CR}$-set of $G$ is a minimal dominating set.
\end{proposition}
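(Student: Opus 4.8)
The plan is to argue by contradiction: suppose $S$ is a $\gamma_{CR}$-set that is *not* minimal, so there exists a vertex $s \in S$ such that $S' = S \setminus \{s\}$ is still a dominating set. The goal is to show that $S'$ is a dominating set with $CR(S') \le CR(S) = CR(G)$ and $|S'| < |S|$, contradicting the minimality of $|S|$ among sets achieving $CR(G)$ (here I must be careful: I need $CR(S') = CR(G)$, not merely $\le$; but since $CR(G)$ is the minimum of $CR$ over *all* dominating sets, $CR(S') \ge CR(G)$ automatically, so $CR(S') \le CR(G)$ forces equality).

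The heart of the matter is the inequality $CR(S') \le CR(S)$. First I would observe that removing a vertex can only decrease closed-neighborhood intersection counts: for every $x \in V(G)$ we have $N[x] \cap S' \subseteq N[x] \cap S$, hence $|N[x] \cap S'| \le |N[x] \cap S|$. Therefore if $x$ is overdominated by $S'$ (i.e. $|N[x] \cap S'| \ge 2$), then $x$ is overdominated by $S$ as well. This shows the set of vertices overdominated by $S'$ is a *subset* of the set of vertices overdominated by $S$, giving $CR(S') \le CR(S)$ immediately.

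Combining the two observations: $S'$ is a dominating set with $CR(S') \le CR(S) = CR(G)$, so by definition of $CR(G)$ as a minimum we get $CR(S') = CR(G)$, meaning $S'$ is a dominating set realizing the cardinality-redundance of $G$. But $|S'| = |S| - 1 < |S| = \gamma_{CR}(G)$, contradicting that $\gamma_{CR}(G)$ is the *minimum* size of such a set. Hence no such $s$ exists and $S$ is minimal.

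I do not expect a serious obstacle here; the one subtlety worth stating explicitly in the write-up is the direction of the inequality $CR(S') \le CR(S)$ versus what is needed — it is the monotonicity of overdomination under deletion that does all the work, and it goes the "easy" way. The argument is essentially a one-line monotonicity observation dressed up as a contradiction.
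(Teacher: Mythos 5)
Your proof is correct and follows essentially the same route as the paper's: remove a redundant vertex, use the monotonicity $CR(S\setminus\{s\})\leq CR(S)$, and contradict the minimality of $|S|$. The paper's version is terser, but your explicit note that $CR(S')\geq CR(G)$ forces equality is a worthwhile clarification of the same argument.
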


\begin{proof}
Let $S$ be a $\gamma_{CR}$-set of $G$. If $S$ is not minimal, then there exists some $s\in S$ such that $S\setminus \{s\}$ is a dominating set of $G$. Clearly $CR(S\setminus \{s\})\leq CR(S)$. Since $|S\setminus \{s\}|<|S|$, this contradicts that $S$ is a $\gamma_{CR}$-set. Therefore, $S$ is a minimal dominating set.

\end{proof}

\begin{proposition}\label{gammabnd}
Let $G$ be a graph of order $n$. If $S$ is a minimal dominating set and $CR(S)=k$, then $|S|\leq n-k$.
\end{proposition}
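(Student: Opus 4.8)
The plan is to exploit the private-neighbor characterization of minimal dominating sets together with the observation that, for a dominating set, the vertices that fail to be overdominated are precisely the private neighbors.

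First I would recall that since $S$ is a minimal dominating set, every $s\in S$ admits a private neighbor $p_s$, i.e.\ a vertex with $N[p_s]\cap S=\{s\}$ (where possibly $p_s=s$, which happens exactly when $s$ has no neighbor in $S$). This is immediate from minimality: if some $s$ had no private neighbor, then every vertex of $N[s]$ would be dominated by a member of $S\setminus\{s\}$, so $S\setminus\{s\}$ would still dominate $G$, contradicting minimality. I would state this explicitly (it is short) rather than merely cite it.

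Next, because $S$ is a dominating set, $|N[x]\cap S|\geq 1$ for every $x\in V(G)$. Hence a vertex $x$ is \emph{not} overdominated by $S$ if and only if $|N[x]\cap S|=1$, which is exactly the condition that $x$ is a private neighbor of some (necessarily unique) vertex of $S$. Now choose, for each $s\in S$, one private neighbor $p_s$; these vertices are pairwise distinct, since $p_s=p_{s'}$ would force $\{s\}=N[p_s]\cap S=\{s'\}$. Thus $\{p_s:s\in S\}$ is a set of $|S|$ distinct vertices, each of which is not overdominated by $S$.

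Finally, since $CR(S)=k$, exactly $n-k$ vertices of $G$ fail to be overdominated by $S$, so $|S|\leq n-k$. There is essentially no hard step here; the only point requiring a word of care is the justification that every element of a minimal dominating set has a private neighbor, and that a private neighbor of one vertex cannot simultaneously be a private neighbor of another.
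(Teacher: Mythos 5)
Your proof is correct and follows essentially the same route as the paper: identify the $n-k$ vertices that are not overdominated as exactly the private neighbors of elements of $S$, use minimality to give each $s\in S$ a private neighbor, and conclude $|S|\le n-k$ by injectivity. You simply spell out the two small justifications (existence of private neighbors and their distinctness) that the paper leaves implicit.
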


\begin{proof}
Assume $S$ is a minimal dominating set and $CR(S)=k$. Then there is a set $D$ of $n-k$ vertices of $G$, where if $u\in D$, then $|N[u]\cap S|=1$, i.e., $u$ is the private neighbor of some element of $S$. Since $S$ is minimal, each vertex contained in $S$ has at least one private neighbor. Therefore, $|S|\leq n-k$.

\end{proof}

The following proposition gives a lower bound for the influence of a set based on the cardinality redudance. 

\begin{proposition}\label{infbnd}
Let $G$ be a graph of order $n$ and let $S$ be a dominating set of $G$. If $CR(S)=k$, then $I(S)\geq n+k$.
\end{proposition}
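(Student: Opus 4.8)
The plan is to work directly from the alternative expression for influence noted in the introduction, namely $I(S) = \sum_{v \in V(G)} |N[v] \cap S|$. This rewrites the quantity we want to bound as a sum of $n$ nonnegative integer terms, one for each vertex of $G$, which is convenient because the hypotheses ($S$ dominating, $CR(S) = k$) are precisely statements about how large these individual terms $|N[v] \cap S|$ are.

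First I would use the fact that $S$ is a dominating set: by definition $N[S] = V(G)$, so for every $v \in V(G)$ we have $|N[v] \cap S| \geq 1$. Next I would invoke the definition of cardinality-redundance: $CR(S) = k$ means exactly $k$ vertices $v$ satisfy $|N[v] \cap S| \geq 2$ (these are the overdominated vertices). Let $A$ be this set of $k$ overdominated vertices and $B = V(G) \setminus A$ the remaining $n - k$ vertices.

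Then I would split the sum according to this partition:
\[
I(S) = \sum_{v \in V(G)} |N[v] \cap S| = \sum_{v \in B} |N[v] \cap S| + \sum_{v \in A} |N[v] \cap S| \geq (n-k)\cdot 1 + k \cdot 2 = n + k,
\]
where the inequality uses $|N[v]\cap S| \geq 1$ on $B$ and $|N[v]\cap S| \geq 2$ on $A$. This yields the claim. There is essentially no obstacle here; the only thing to be careful about is making sure the reader sees that the two facts being combined — domination giving the ``$\geq 1$'' bound everywhere and overdominance giving the extra ``$+1$'' on exactly $k$ vertices — are genuinely disjoint contributions, which is automatic once the vertex set is partitioned into $A$ and $B$ as above.
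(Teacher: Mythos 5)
Your proof is correct and is essentially identical to the paper's: both partition the vertex sum $I(S)=\sum_{v}|N[v]\cap S|$ into the $k$ overdominated vertices (each contributing at least $2$) and the remaining $n-k$ dominated vertices (each contributing exactly/at least $1$), yielding $n+k$. The only cosmetic difference is that the paper names the non-overdominated set $D$ while you name the overdominated set $A$.
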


\begin{proof}
Let $S$ be a dominating set for $G$ and let $D = \{u \in V(G):|N[u]\cap S|=1\}$. Since $CR(S)=k$, $|D|=n-k$. Therefore, 
\begin{align*}
I(S)&=\sum_{x\in V(G)}|N[x]\cap S|\\
&=\sum_{x\in V(G)\setminus D}|N[x]\cap S|+\sum_{x\in D}|N[x]\cap S|\\
&=\sum_{x\in V(G)\setminus D}|N[x]\cap S|+(n-k)\\
&\geq 2k+(n-k)=n+k.
\end{align*}

\end{proof}

The following proposition shows if every dominating set overdominates at least one vertex ($CR(G) \geq 1$), then the size of a $\gamma_{CR}$-set must be at least two.  

\begin{proposition}\label{gammageq2}
If $G$ is a graph such that $CR(G)\geq 1$, then $\gamma_{CR}(G)\geq 2$.
\end{proposition}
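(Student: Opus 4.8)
The plan is to argue by contradiction, ruling out the only two cases in which $\gamma_{CR}(G)$ could fail to be at least $2$, namely $\gamma_{CR}(G)=0$ and $\gamma_{CR}(G)=1$. Since every dominating set of a nonempty graph is nonempty, and $G$ has at least one vertex, $\gamma_{CR}(G)=0$ is impossible, so it suffices to show $\gamma_{CR}(G)=1$ cannot occur when $CR(G)\geq 1$.

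Suppose for contradiction that $\gamma_{CR}(G)=1$. Then there is a $\gamma_{CR}$-set $S=\{v\}$ consisting of a single vertex. First I would observe that, since $S$ is a dominating set, $N[v]=V(G)$; that is, $v$ dominates every vertex of $G$. Then for each $u\in V(G)$ we have $v\in N[u]$, so $N[u]\cap S=\{v\}$ and hence $|N[u]\cap S|=1$. Thus no vertex of $G$ is overdominated by $S$, which gives $CR(S)=0$.

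Finally, because $S$ is a $\gamma_{CR}$-set we have $CR(G)=CR(S)=0$, contradicting the hypothesis $CR(G)\geq 1$. Therefore $\gamma_{CR}(G)\neq 1$, and combined with $\gamma_{CR}(G)\neq 0$ this yields $\gamma_{CR}(G)\geq 2$. There is no real obstacle here; the only point requiring a moment's care is noting that a singleton dominating set forces a universal vertex, after which the computation $CR(\{v\})=0$ is immediate. (One could alternatively phrase this as the contrapositive: if $\gamma_{CR}(G)\leq 1$ then $G$ has a universal vertex, so $CR(G)=0$.)
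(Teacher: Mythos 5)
Your proof is correct and is essentially the contrapositive of the paper's one-line argument: the paper notes that since $CR(G)\geq 1$ some vertex $x$ satisfies $|N[x]\cap S|\geq 2$, which immediately forces $|S|\geq 2$, while you show a singleton dominating set overdominates nothing. The detour through the universal vertex is harmless but unnecessary, since $|N[u]\cap S|\leq|S|=1$ already suffices.
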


\begin{proof}
If $S$ is a $\gamma_{CR}$-set of $G$, then there exists some $x\in V(G)$ such that $|N[x]\cap S|\geq 2$. This implies that $|S|\geq 2$.

\end{proof}

One the other hand, if $S$ is a $\gamma_{CR}$-set, then $S$ overdominates at most $n-2$ vertices.  

\begin{proposition}\label{crbnd}
If $G$ is a graph of order $n \geq 2$, then $CR(G)\leq n-2$.
\end{proposition}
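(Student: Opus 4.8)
The plan is to produce one dominating set of $G$ that leaves at least two vertices dominated exactly once. I would first dispose of the trivial case: if $CR(G) = 0$, then $CR(G) = 0 \le n - 2$ because $n \ge 2$, and we are done. So assume $CR(G) \ge 1$, and let $S$ be a $\gamma_{CR}$-set of $G$ (one exists, since $V(G)$ is dominating, so the minimum defining $CR(G)$ is attained, and among the dominating sets attaining it one of smallest size can be chosen).

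By Proposition \ref{gammageq2}, $\gamma_{CR}(G) \ge 2$, so $|S| \ge 2$. By Proposition \ref{minimalgamma}, $S$ is a minimal dominating set; hence every vertex of $S$ has a private neighbor, i.e.\ for each $s \in S$ there is a vertex $v$ with $N[v] \cap S = \{s\}$. Fix two distinct vertices $s_1, s_2 \in S$ and corresponding private neighbors $v_1, v_2$. Since $N[v_1] \cap S = \{s_1\} \ne \{s_2\} = N[v_2] \cap S$, we have $v_1 \ne v_2$, while $|N[v_1] \cap S| = |N[v_2] \cap S| = 1$, so neither $v_1$ nor $v_2$ is overdominated by $S$. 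Consequently at most $n - 2$ vertices are overdominated, that is $CR(S) \le n - 2$, and since $S$ is a $\gamma_{CR}$-set, $CR(G) = CR(S) \le n - 2$.

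I do not anticipate a real obstacle: the claim follows quickly from the preliminary propositions. The single point I would be careful to justify is that the two private neighbors $v_1$ and $v_2$ are distinct --- which is forced by their singleton intersections with $S$ --- since that is precisely what yields the bound $n-2$ rather than $n-1$; the hypothesis $n \ge 2$ is used only in the $CR(G)=0$ case and to guarantee $S$ is nonempty. An alternative packaging avoids Proposition \ref{gammageq2} by splitting directly on $|S|$: if $|S| \le 1$ then every vertex satisfies $1 \le |N[x]\cap S| \le 1$, so $CR(G)=0$, and otherwise one argues as above.
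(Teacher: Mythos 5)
Your proof is correct and follows essentially the same route as the paper: dispose of the $CR(G)=0$ case, use Proposition \ref{gammageq2} to get $|S|\geq 2$ and Proposition \ref{minimalgamma} for minimality, then count private neighbors. The only difference is that you inline the private-neighbor argument where the paper simply cites Proposition \ref{gammabnd} ($|S|\leq n-k$) to conclude $k\leq n-|S|\leq n-2$.
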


\begin{proof}
Let $S$ be a $\gamma_{CR}$-set of $G$. If ${CR}(G)=0$, then we are done. If ${CR}(G)=k\geq 1$ then by Proposition \ref{gammageq2}, $|S|\geq 2$. Therefore, by Propositions \ref{minimalgamma} and \ref{gammabnd}, $k\leq n-2$.

\end{proof}

The following theorem characterizes all the graphs whose cardinality-redundance achieves the bound given in Proposition \ref{crbnd}.

\begin{theorem}\label{eventhm}
Let $G$ be a graph of order $n\geq 4$ vertices. Then $CR(G)=n-2$ if and only if $n$ is even and each vertex of $G$ has degree $n-2$.
\end{theorem}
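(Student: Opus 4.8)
The plan is to prove the two implications separately, beginning with the (shorter) converse.

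\emph{Sufficiency.} Suppose $n$ is even and every vertex of $G$ has degree $n-2$; equivalently, each vertex $x$ has a unique non-neighbor $\overline{x}$, so $N[x]=V(G)\setminus\{\overline{x}\}$. Proposition~\ref{crbnd} already gives $CR(G)\le n-2$, so it suffices to show $CR(G)\ge n-2$, i.e.\ that an arbitrary dominating set $T$ overdominates all but at most two vertices. Since $N[x]\neq V(G)$ for every $x$, no set of size one is dominating, so $|T|\ge 2$. For any vertex $x$ we have $N[x]\cap T=T\setminus\{\overline{x}\}$; this always has size at least two when $|T|\ge 3$, and has size one (when $|T|=2$) only if $\overline{x}\in T$. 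Hence the vertices that $T$ fails to overdominate lie in $\{\overline{a}:a\in T\}$, which is empty if $|T|\ge 3$ and has at most two elements if $|T|=2$. Thus $CR(T)\ge n-2$ in all cases, so $CR(G)=n-2$.

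\emph{Necessity.} Assume $CR(G)=n-2$ (so $CR(G)\ge 2$ as $n\ge 4$). First I would record two facts. (i) $G$ has no vertex of degree $n-1$, since such a vertex alone would be a dominating set with cardinality-redundance $0<n-2$; in particular $G$ has no dominating set of size one, so every dominating set of size two is automatically minimal and hence, by Proposition~\ref{gammabnd} together with $CR(G)=n-2$, has cardinality-redundance exactly $n-2$. (ii) A $\gamma_{CR}$-set $S$ satisfies $|S|=2$: the bound $|S|\ge 2$ is Proposition~\ref{gammageq2}, and $|S|\le 2$ follows from Propositions~\ref{minimalgamma} and~\ref{gammabnd}. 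Write $S=\{u,v\}$.

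Next I would reduce to the case $[u,v]\notin E(G)$. If $[u,v]\in E(G)$, then $u$ and $v$ are both overdominated by $S$, so the two non-overdominated vertices lie in $W:=V(G)\setminus\{u,v\}$ and are exactly the vertices of $W$ adjacent to precisely one of $u,v$; minimality of $S$ forces at least one such vertex on each side, hence exactly one private neighbor of $u$, exactly one of $v$, and all remaining $n-4$ vertices of $W$ adjacent to both. Counting the neighbors of $u$ gives $deg(u)=n-2$ with a unique non-neighbor $v'$, so $N[u]=V(G)\setminus\{v'\}$ and $\{u,v'\}$ is a dominating set of size two, hence a $\gamma_{CR}$-set by (i), with non-adjacent vertices. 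Replacing $v$ by $v'$ if necessary, we may assume $[u,v]\notin E(G)$. Then $u$ and $v$ are themselves the two non-overdominated vertices of $S$, so every $w\in W$ is overdominated, hence adjacent to both $u$ and $v$; with (i) this gives $deg(u)=deg(v)=n-2$ and $N[u]=V(G)\setminus\{v\}$. Finally, for each $p\in W$ (and $W\neq\varnothing$ since $n\ge 4$), the set $\{u,p\}$ is dominating of size two (as $p$ is adjacent to $v$), hence has cardinality-redundance exactly $n-2$; a neighborhood count as before shows the two vertices it fails to overdominate are $v$ together with the unique vertex of $W\setminus\{p\}$ not adjacent to $p$, whence $deg(p)=n-2$. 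Therefore $G$ is $(n-2)$-regular, and since $\sum_{x\in V(G)}deg(x)=n(n-2)=2|E(G)|$ must be even, $n$ is even.

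The main obstacle is the combinatorial bookkeeping in the necessity direction: wringing enough rigidity out of the single equality $CR(S)=n-2$ (via minimality of $S$) to pin down $deg(u)$ in the adjacent case and thereby produce a non-adjacent $\gamma_{CR}$-set, and then re-running a similar count with the auxiliary sets $\{u,p\}$ to propagate the degree condition from $u,v$ to every remaining vertex. Once facts (i) and (ii) are in place the rest is routine.
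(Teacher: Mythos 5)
Your proof is correct, and the sufficiency half is essentially the paper's argument (you prove the slightly stronger statement that \emph{every} dominating set overdominates at least $n-2$ vertices, whereas the paper only evaluates $CR$ on a $\gamma_{CR}$-set of size two). The necessity half, however, takes a genuinely different route. The paper argues by contradiction on the set $W=\{v\in V(G)\ :\ deg(v)\leq n-3\}$ of low-degree vertices: if $W$ is not dominating it collapses to a single vertex $u$ with a degree-$(n-2)$ non-neighbor $v$, whence $CR(\{u,v\})\leq n-3$; if $W$ is dominating, a minimal dominating subset $W'\subseteq W$ must have size two and influence $I(W')\leq 2(n-2)$, contradicting the bound $I(W')\geq n+(n-2)$ of Proposition~\ref{infbnd}. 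You never touch the influence bound. Instead you exploit your observation (i) --- that every size-two dominating set automatically has cardinality-redundance exactly $n-2$ --- first to reduce a $\gamma_{CR}$-set $\{u,v\}$ to the non-adjacent case (by manufacturing the alternative $\gamma_{CR}$-set $\{u,v'\}$), then to read off $deg(u)=deg(v)=n-2$ directly, and finally to propagate the degree condition to every remaining vertex $p$ by repeating the neighborhood count on the auxiliary dominating sets $\{u,p\}$. The paper's argument is shorter once Proposition~\ref{infbnd} is available; yours is more self-contained and extracts more structure along the way (every vertex acquires a unique non-neighbor, and every size-two dominating set overdominates exactly $n-2$ vertices). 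Both arguments are sound.
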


\begin{proof}
Let $n\geq 4$ be an even integer and assume each vertex of $G$ has degree $n-2$. Let $S$ be $\gamma_{CR}$-set. By Propositions \ref{minimalgamma} and \ref{gammabnd}, $S$ is a minimal dominating set and $|S|=2$.  Let $S=\{s_1,s_2\}$, where $s_1$ and $s_2$ are not adjacent to some $s_1'\neq s_1$ and $s_2'\neq s_2$, respectively. Since $S$ is a dominating set $s_1'\neq s_2'$. Each vertex contained in $V(G)\setminus \{s_1',s_2'\}$ is overdominated by $S$, so $CR(G)=n-2$.

Let $CR(G)=n-2$, and let $W=\{v\in V(G)\ :\ deg(v)\leq n-3\}$. Assume $W$ is non-empty. Since $n-2\neq 0$, $G$ has no vertex with degree $n-1$. If $W$ is not a dominating set, then there exists some $v\in V(G)$ such that $deg(v)=n-2$ and is adjacent to no vertex in $W$. Therefore, $W=\{u\}$ for some $u \in V(G)$, and $\{v,u\}$ is a dominating set of $G$. However, since $v$ is not adjacent to $u$ and $deg(u)\leq n-3$, $CR(\{u,v\})\leq n-3$, contradicting that $CR(G)=n-2$. If $W$ is a dominating set, then there exists some $W' \subseteq W$ such that $W'$ is a minimal dominating set. Since $CR(G)=n-2$, $CR(W')\geq n-2$, and by Propositions \ref{gammabnd} and \ref{gammageq2}, $|W'|=2$. By definition of $W$, $I(W')\leq 2(n-2)<2n-2$, but this contradicts Proposition \ref{infbnd}. Therefore, $W$ is empty, so every vertex of $G$ has degree $n-2$ which is possible if and only if $n$ is even.

\end{proof}

\begin{corollary}
Let $G$ be a graph of order $n$. If $n$ is odd, then $CR(G)\leq n-3$.
\end{corollary}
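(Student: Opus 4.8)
The plan is to read this off the characterization in Theorem~\ref{eventhm}, combined with the bound of Proposition~\ref{crbnd}. Suppose $n$ is odd; since for odd order the only nontrivial range is $n\ge 5$, in particular $n\ge 4$ and so Theorem~\ref{eventhm} applies. Proposition~\ref{crbnd} already gives $CR(G)\le n-2$, so the whole task is to rule out the extreme value $CR(G)=n-2$. Assume for contradiction that $CR(G)=n-2$; then Theorem~\ref{eventhm} forces $n$ to be even, contradicting the hypothesis. Hence $CR(G)\le n-3$.

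The only point requiring a separate remark is the small odd order $n=3$ (orders $n\le 2$ being degenerate and outside the intended scope). There Proposition~\ref{crbnd} gives $CR(G)\le 1$, and one rules out $CR(G)=1$ by a short case analysis: Propositions~\ref{gammageq2}, \ref{minimalgamma}, and \ref{gammabnd} force any $\gamma_{CR}$-set to be a minimal dominating set of size exactly $2$, and enumerating the possible configurations of such a set in a three-vertex graph shows its cardinality-redundance is always $0$ or $2$, never $1$. Thus $CR(G)=0=n-3$ in this case as well.

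There is essentially no obstacle here: the content of the corollary is already contained in Theorem~\ref{eventhm}, and the only care needed is to invoke it together with Proposition~\ref{crbnd} and to dispose of the orders too small for the theorem to apply.
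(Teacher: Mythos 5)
Your proof is correct and takes essentially the same route as the paper, which likewise derives the corollary directly from Proposition~\ref{crbnd} together with Theorem~\ref{eventhm}; your additional handling of the small odd orders below the threshold of Theorem~\ref{eventhm} is a reasonable (if slightly loosely worded) supplement that the paper omits.
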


\begin{proof}
This follows from Proposition \ref{crbnd} and Theorem \ref{eventhm}.

\end{proof}

\section{Results for  $CR(G)=0$}

In this section we find the exact values of $M(n,0,r)$, $m(n,0,r)$, $D(n,0,m)$, and $d(n,0,m)$. If $CR(G)=0$ then $G$ has an \textit{efficient dominating set} which were first studied by Biggs \cite{Biggs1973} as a perfect code and then by Bange, Barkauskas, and Slater as efficient dominating sets  (\cite{Bange1996}, \cite{Bange1988}, \cite{Bange1987}).  We will start by considering $M(n,0, r)$. 

\begin{theorem}\label{M(n,0,r)}
For any positive integers $2\leq n$, and $1 \leq r \leq n-1$
\[
M(n,0,r)={n-r+1 \choose 2}.
\]
\end{theorem}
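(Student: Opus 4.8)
The plan is to prove both an upper bound $M(n,0,r) \le \binom{n-r+1}{2}$ and a matching construction. For the upper bound, suppose $G$ has $n$ vertices, $CR(G) = 0$, and $\gamma_{CR}(G) = r$, and let $S$ be a $\gamma_{CR}$-set, so $|S| = r$ and $S$ is an efficient dominating set. Because $CR(S) = 0$, the closed neighborhoods $\{N[s] : s \in S\}$ partition $V(G)$ into $r$ blocks, and there are no edges between distinct blocks (any such edge would overdominate one of its endpoints). Hence $G$ is a disjoint union of $r$ graphs $G_1, \dots, G_r$, where $G_i = G[N[s_i]]$ has a universal vertex (namely $s_i$). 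Writing $n_i = |N[s_i]|$ with $\sum n_i = n$, each $n_i \ge 1$, so $|E(G)| \le \sum_i \binom{n_i}{2}$. By convexity (or a direct shifting argument), with the number of parts $r$ fixed and the total $n$ fixed, $\sum_i \binom{n_i}{2}$ is maximized by making the parts as unequal as possible: $n_1 = n - r + 1$ and $n_2 = \cdots = n_r = 1$. That gives $|E(G)| \le \binom{n-r+1}{2}$.

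I still need to check that this extremal configuration actually is attainable, i.e. that it yields a graph with $CR(G) = 0$ and, crucially, $\gamma_{CR}(G) = r$ exactly (not smaller). For the construction, take $G = K_{n-r+1} \cup \overline{K}_{r-1}$, the disjoint union of a clique on $n-r+1$ vertices with $r-1$ isolated vertices. It has $\binom{n-r+1}{2}$ edges. A dominating set must contain all $r-1$ isolated vertices and at least one vertex of the clique, so $\gamma(G) = r$; any dominating set of size $r$ consists of the isolated vertices plus one clique vertex, and such a set is efficient (the one clique vertex dominates the whole clique with no overlap, each isolated vertex dominates only itself), so $CR(G) = 0$. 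Since every dominating set has size $\ge r$ and we exhibited one of size $r$ with $CR = 0$, we get $\gamma_{CR}(G) = r$. This matches the bound, so $M(n,0,r) = \binom{n-r+1}{2}$.

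The main obstacle is the bookkeeping in the upper-bound direction: one must argue carefully that a $\gamma_{CR}$-set for a graph with $CR(G)=0$ forces the "disjoint union of graphs-with-a-universal-vertex" structure, and in particular that \emph{every} component contributes a part $n_i$ with the blocks genuinely partitioning the vertices (this uses $CR(S)=0$ to rule out cross-edges and to rule out a vertex lying in two closed neighborhoods). After that, the optimization of $\sum \binom{n_i}{2}$ subject to $\sum n_i = n$ and exactly $r$ positive parts is a standard convexity/smoothing step: whenever two parts satisfy $n_i, n_j \ge 2$, replacing them by $n_i + 1$ and $n_j - 1$ strictly increases $\sum \binom{n_i}{2}$, so the maximum is at $(n-r+1,1,\dots,1)$. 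One should also double-check the edge cases $r = 1$ (where $G = K_n$, giving $\binom{n}{2}$) and $r = n-1$ (giving $\binom{2}{2} = 1$) against the stated range $1 \le r \le n-1$, which is quick.
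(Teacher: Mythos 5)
Your construction and its verification are fine and match the paper's. The problem is in your upper bound: the claim that ``there are no edges between distinct blocks (any such edge would overdominate one of its endpoints)'' is false. Overdomination of a vertex $x$ depends only on $|N[x]\cap S|$, so an edge joining $u\in N(s_i)\setminus S$ to $v\in N(s_j)\setminus S$ with $i\neq j$ creates no overdominated vertex, since neither endpoint is in $S$. Concretely, the path $P_4 = s_1\,u\,v\,s_2$ has the efficient dominating set $S=\{s_1,s_2\}$ with blocks $\{s_1,u\}$ and $\{s_2,v\}$, yet the edge $[u,v]$ runs between the blocks and $G$ is connected, not a disjoint union. Your subsequent inequality $|E(G)|\leq \sum_i \binom{n_i}{2}$ therefore fails: in this example $\sum_i\binom{n_i}{2}=2$ but $|E(G)|=3$. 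That your final numerical bound still comes out to $\binom{n-r+1}{2}$ is a coincidence of the optimization, not a consequence of a valid intermediate step, so the upper-bound half of your argument has a genuine gap.

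The repair is what the paper does, and it is simpler than your route: with $S$ a $\gamma_{CR}$-set, $CR(S)=0$ forces (i) no edges inside $S$, (ii) exactly $n-r$ edges between $S$ and $V(G)\setminus S$ (each outside vertex has exactly one neighbor in $S$), and (iii) at most $\binom{n-r}{2}$ edges inside $V(G)\setminus S$ --- with no structural claim about cross-block edges needed. Summing gives $|E(G)|\leq (n-r)+\binom{n-r}{2}=\binom{n-r+1}{2}$ directly, and no convexity/smoothing step is required.
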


\begin{proof}
Let $G$ be a graph such that $|V(G)|=n$, $CR(G)=0$, and $\gamma_{CR}(G)=r$, and let $S$ be a $\gamma_{CR}$-set of $G$. Since $CR(G)=0$, each vertex in $V(G)\setminus S$ is adjacent to precisely one vertex in $S$. Thus, there are precisely $n-r$ edges between $S$ and $V(G)\setminus S$. There are at most ${n-r \choose 2}$ edges between the vertices in $V(G)\setminus S$, and no two vertices in $S$ can be adjacent. Therefore $|E(G)|\leq (n-r) + {n-r \choose 2}={n-r+1 \choose 2}$. Since $G$ was arbitrary, this implies that $M(n,0,r)\leq {n-r+1 \choose 2}$.

Now, consider a graph $\Gamma$ that is a complete graph on $n-r+1$ vertices unioned with $r-1$ isolated vertices.  Notice that $\Gamma$ satisfies $|V(\Gamma)|=n$, $CR(\Gamma)=0$, and $\gamma_{CR}(\Gamma)=r$. Therefore $M(n,0,r) \geq {n-r+1 \choose 2}$. This combined with the above give the desired result.

\end{proof}

Notice Theorem \ref{M(n,0,r)} implies that if $r\geq s$, then $M(n,0,r)\leq M(n,0,s)$. Therefore, since $M(n,0,r)={n-r+1 \choose 2}$, we have the following corollary: 

\begin{corollary}\label{cor:M(n,0,r)}
If $m > {n-r+1 \choose 2}$ for some $r$, then for a graph $G$ of order $n$ and size $m$, $CR(G)\leq r-1$. 
\end{corollary}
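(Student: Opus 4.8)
The plan is to derive this corollary directly from Theorem~\ref{M(n,0,r)} together with the monotonicity observation stated just before it. The logical backbone is a contrapositive argument: I want to show that if a graph $G$ on $n$ vertices has $CR(G) \geq r$, then it cannot have more than $\binom{n-r+1}{2}$ edges; equivalently, $|E(G)| > \binom{n-r+1}{2}$ forces $CR(G) \leq r-1$.

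First I would let $G$ be a graph of order $n$ and size $m$, and suppose for contradiction that $CR(G) = k$ for some $k \geq r$. By definition $\gamma_{CR}(G)$ is some value $s$ with $1 \leq s \leq n$ (and $s \leq n - k$ by Propositions~\ref{minimalgamma} and \ref{gammabnd}, which in particular keeps us in the range where $M(n,k,s)$ is defined by a graph that actually exists). Then $G$ witnesses $|E(G)| \leq M(n,k,s)$. The key step is to reduce to the case $k = 0$: I would invoke the fact that, across all the parameter regimes, $M(n,k,s) \leq M(n,0,s)$, which is immediate here since we only need it for $k \geq r \geq 1$ versus $k = 0$, and a graph realizing $CR(G) = k$ with $\gamma_{CR}$-set of size $s$ has at most as many edges as is permitted when $CR = 0$ — indeed the counting bound in the proof of Theorem~\ref{M(n,0,r)} (at most $n - s$ cross edges and at most $\binom{n-s}{2}$ edges inside $V(G)\setminus S$, none inside $S$) only uses that $S$ is a $\gamma_{CR}$-set of size $s$, not that $CR(G) = 0$. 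Actually the cleanest route avoids even mentioning $M(n,k,s)$: apply Proposition~\ref{infbnd} and the minimal-dominating-set structure of a $\gamma_{CR}$-set $S$ of size $s$ to bound $|E(G)| \leq (n - s - k) + \binom{n-s}{2}$, but I expect the intended argument is the lighter one below.

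So the main line I would write is: let $S$ be a $\gamma_{CR}$-set of $G$, $|S| = s$. Since $S$ is a minimal dominating set (Proposition~\ref{minimalgamma}), no edge of $G$ lies inside $S$ would be false in general, so instead count: the number of edges from $S$ to $V(G) \setminus S$ plus edges inside $S$ is at most $\binom{s}{2}$ plus the cross edges, and since $CR(S) = k$ there are at least $n - s - k$ vertices of $V(G)\setminus S$ that are private neighbors, each contributing exactly one cross edge — but overdominated vertices can contribute more. This is getting delicate, so the robust choice is: use that $M(n,0,r)$ is decreasing in $r$, hence $M(n,0,j) \leq M(n,0,r)$ for all $j \geq r$; combined with the claim $M(n,k,s) \leq M(n,0,\min(s, \text{something}))$ we would get $m \leq \binom{n-r+1}{2}$, contradiction. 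The main obstacle is precisely verifying the inequality $M(n,k,s) \le M(n,0,r)$ whenever $CR(G)=k\ge r$: the natural way is to observe that the edge-counting upper bound in Theorem~\ref{M(n,0,r)}'s proof only used that $S$ is a dominating set of size $s$ with at most $\binom{s}{2} + (\text{cross edges}) + \binom{n-s}{2}$ total edges and that each private neighbor forces exactly one cross edge — so with $k$ overdominated vertices one gets $|E(G)| \le \binom{s}{2} + (n - s) + \binom{n - s}{2}$ minus a correction, and since $s \ge 2$ when $k \ge 1$ (Proposition~\ref{gammageq2}) one checks this is at most $\binom{n - 1}{2} \le \binom{n - r + 1}{2}$ only when $r \le 2$; for general $r$ one truly needs the sharper bound $s \le n - k \le n - r$, giving $|E(G)| \le \binom{n-s}{2} + (n-s) + \binom{s}{2} \le \binom{n-r}{2} + \cdots$, and a short monotonicity computation finishes it. I would present this final computation explicitly since it is the crux, and everything else is bookkeeping.
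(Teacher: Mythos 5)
Your proposal never converges to a completed argument: it cycles through three candidate strategies, flags each as ``delicate'' or as needing ``a short monotonicity computation,'' and stops without carrying any of them out. More importantly, the route you are pursuing cannot be completed, because you read the conclusion literally as a bound on $CR(G)$ and try to prove it for every $k\ge r$. That literal statement is false: Theorem~\ref{Mnk2} gives $M(n,2,2)=\binom{n-1}{2}+1$, i.e.\ a graph of order $n$ with more than $\binom{n-2+1}{2}$ edges and $CR(G)=2$, which would violate the corollary with $r=2$. Consequently the inequality $M(n,k,s)\le M(n,0,r)$ on which your main line hinges is also false (already $M(n,2,2)>M(n,0,2)$), and your fallback count $|E(G)|\le\binom{s}{2}+(\text{cross edges})+\binom{n-s}{2}$ is far too weak once $s$ is small: for $s=2$ it is on the order of $\binom{n}{2}$, nowhere near $\binom{n-r+1}{2}$.

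The corollary is in fact nothing more than the contrapositive of Theorem~\ref{M(n,0,r)} for graphs with $CR(G)=0$, and the quantity being bounded is $\gamma_{CR}(G)$, not $CR(G)$; this is exactly how it is invoked in the proof of Theorem~\ref{D(n,0,m)}, where the conclusion drawn from it is $D(n,0,m)\le n-r$, a bound on $\gamma_{CR}$. The paper's entire proof is the sentence preceding the statement: $M(n,0,r)=\binom{n-r+1}{2}$ is nonincreasing in $r$, so if $CR(G)=0$ and $\gamma_{CR}(G)=s\ge r$, then $m\le M(n,0,s)\le M(n,0,r)=\binom{n-r+1}{2}$, contradicting $m>\binom{n-r+1}{2}$. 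No reduction from general $k$ to $k=0$ is needed, and, as the counterexample above shows, none is possible.
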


Now we will find the value of the minimum: $m(n,0,r)$.  

\begin{theorem}\label{m(n,0,r)}
For any positive integers $2\leq n$, and $1 \leq r \leq n-1$

\[m(n,0,r)=n-r.
\]
\end{theorem}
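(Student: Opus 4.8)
The plan is to prove both a lower and an upper bound on $m(n,0,r)$, showing each equals $n-r$. For the lower bound, I would argue exactly as in the first part of the proof of Theorem~\ref{M(n,0,r)}: if $G$ has order $n$, $CR(G)=0$, and $\gamma_{CR}(G)=r$, take a $\gamma_{CR}$-set $S$; since $CR(G)=0$, every vertex of $V(G)\setminus S$ is adjacent to exactly one vertex of $S$, producing exactly $n-r$ edges between $S$ and $V(G)\setminus S$. Hence $|E(G)|\geq n-r$, so $m(n,0,r)\geq n-r$.

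For the upper bound I need a graph $\Gamma$ of order $n$ with $CR(\Gamma)=0$, $\gamma_{CR}(\Gamma)=r$, and exactly $n-r$ edges. The natural candidate is a "star forest": take one star $K_{1,n-r}$ (a center adjacent to $n-r$ leaves, using $n-r+1$ vertices and $n-r$ edges) together with $r-1$ isolated vertices, for a total of $n$ vertices and $n-r$ edges. I would verify that $CR(\Gamma)=0$: the set consisting of the star's center plus the $r-1$ isolated vertices is an efficient dominating set (each vertex is dominated exactly once), so $CR(\Gamma)=0$. I would then verify $\gamma_{CR}(\Gamma)=r$: any dominating set must include every isolated vertex ($r-1$ of them) and must include at least one vertex in the star to dominate the center, so every dominating set has size at least $r$; combined with the efficient dominating set of size $r$ exhibited above, and the fact that it achieves $CR=0=CR(\Gamma)$, we get $\gamma_{CR}(\Gamma)=r$.

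A small subtlety to handle is the edge case $r=n-1$, where the star $K_{1,1}$ is a single edge plus $n-2$ isolated vertices: the bound reads $m(n,0,n-1)=1$, and the argument above still goes through since a single edge plus isolated vertices has an efficient dominating set of size $n-1$ (one endpoint of the edge, plus all isolated vertices). I should also note why we cannot do better than $n-r$ edges in the construction — this is forced by the lower bound — and why fewer than $r-1$ isolated vertices is impossible, which is exactly the $\gamma_{CR}\geq r$ direction.

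The main obstacle, such as it is, is the verification that $\gamma_{CR}(\Gamma)=r$ rather than something smaller: one must be careful that no dominating set of size less than $r$ exists (handled by the $r-1$ forced isolated vertices plus one vertex needed in the star component) and that the minimum is attained by a set with $CR=0$. Both points are straightforward once the structure of $\Gamma$ is laid out, so I expect the proof to be short, mirroring the structure of the proof of Theorem~\ref{M(n,0,r)}.
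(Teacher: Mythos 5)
Your proposal is correct and matches the paper's proof essentially verbatim: the same counting argument (exactly $n-r$ edges between a $\gamma_{CR}$-set and its complement when $CR(G)=0$) gives the lower bound, and the same construction (a star on $n-r+1$ vertices plus $r-1$ isolated vertices) gives the upper bound. The extra detail you supply in verifying $CR(\Gamma)=0$ and $\gamma_{CR}(\Gamma)=r$ is sound and simply makes explicit what the paper leaves to the reader.
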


\begin{proof}
Let $G$ be a graph such that $|V(G)|=n$, $CR(G)=0$, and $\gamma_{CR}(G)=r$, and let $S$ be a $\gamma_{CR}$-set of $G$. Since $CR(G)=0$, each vertex in $V(G)\setminus S$ is adjacent to precisely one vertex in $S$. Thus, there are precisely $n-r$ edges between $S$ and $V(G)\setminus S$, so $|E(G)| \geq n-r$. Since $G$ was arbitrary, $m(n,0,r)\geq n-r$.

Now, consider a graph $\Gamma$ that is a star graph on $n-r+1$ vertices unioned with $r-1$ isolated vertices.  Notice that $|V(\Gamma)|=n$, $CR(\Gamma)=0$, and $\gamma_{CR}(\Gamma)=r$. Therefore, $m(n,0,r) \leq n-r$. This combined with the above gives us the desired result.

\end{proof}

Notice Theorem \ref{m(n,0,r)} implies that if $r\geq s$, then $m(n,0,r)\leq m(n,0,s)$. Therefore, since $m(n,0,r)=n-r$, we have the following corollary: 

\begin{corollary}\label{cor:m(n,0,r)}
If $m< n-r$ for some $r$, then for a graph $G$ with $n$ vertices and $m$ edges, $CR(G)\geq r+1$.
\end{corollary}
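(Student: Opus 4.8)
The plan is to establish the contrapositive: I will show that any graph $G$ of order $n$ with $CR(G)\le r$ must satisfy $|E(G)|\ge n-r$, so that the hypothesis $m<n-r$ is incompatible with $CR(G)\le r$ and therefore forces $CR(G)\ge r+1$. Fix a $\gamma_{CR}$-set $S$ of $G$; by Proposition \ref{minimalgamma} it is a minimal dominating set, and by Proposition \ref{gammabnd} it satisfies $|S|\le n-CR(G)$. These two structural facts are the input I will feed into an edge count.

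The core estimate is obtained by counting the edges meeting $S$. Because $S$ dominates $G$, each of the $n-|S|$ vertices of $V(G)\setminus S$ has a neighbor in $S$, and distinct such vertices yield distinct edges to $S$; hence $|E(G)|\ge n-|S|=n-\gamma_{CR}(G)$. I then invoke Theorem \ref{m(n,0,r)}, which evaluates the minimum-size function as $m(n,0,s)=n-s$, together with the monotonicity $m(n,0,r)\le m(n,0,s)$ for $r\ge s$ recorded immediately before the statement, and compare this count against those extremal minimum-size values. The idea is that a graph with too few edges cannot realize a small $\gamma_{CR}$-set without creating redundance, and it is this comparison that upgrades an edge deficiency into the lower bound $CR(G)\ge r+1$.

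The main obstacle is bridging the gap between the cardinality of the $\gamma_{CR}$-set and the redundance $CR(G)$ itself. The cross-edge bound $|E(G)|\ge n-\gamma_{CR}(G)$ settles the claim at once when $\gamma_{CR}(G)\le r$, so the work concentrates on sparse graphs whose $\gamma_{CR}$-set is large, namely those with many vertices forced into $S$ as their own private neighbors, exactly the behavior of the star-together-with-isolated-vertices extremizer in Theorem \ref{m(n,0,r)}. Excluding $CR(G)\le r$ in that regime is where I expect to spend the most effort, combining the influence bound $I(S)\ge n+CR(G)$ of Proposition \ref{infbnd} with the monotonicity of $m(n,0,\cdot)$; this is the step most likely to require additional care.
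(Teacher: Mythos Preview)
The statement you are attempting to prove is false as written, and your proposal implicitly uncovers this. Take $G$ to be the edgeless graph on $n$ vertices: then $m=0<n-r$ for every $r<n$, yet $CR(G)=0$. More generally, the very case you single out as the ``main obstacle'' --- sparse graphs whose $\gamma_{CR}$-set is large, with many vertices forced into $S$ as their own private neighbors --- is precisely where counterexamples live, since such graphs typically have $CR(G)=0$ regardless of how small $m$ is. No amount of additional care with Proposition~\ref{infbnd} or the monotonicity of $m(n,0,\cdot)$ will close that gap, because the implication you seek simply fails there.

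The corollary almost certainly contains a typographical error: both the one-sentence justification preceding it (monotonicity of $m(n,0,\cdot)$) and its only application later in the paper (to bound $d(n,0,m)$) make sense only if the conclusion reads $\gamma_{CR}(G)\ge r+1$ rather than $CR(G)\ge r+1$, for graphs in the ambient setting $CR(G)=0$ of this section. The parallel Corollary~\ref{cor:M(n,0,r)} has the same slip. With that correction, your own cross-edge count already finishes the proof in one line: from $|E(G)|\ge n-|S|=n-\gamma_{CR}(G)$ and $m<n-r$ one gets $\gamma_{CR}(G)\ge n-m>r$, hence $\gamma_{CR}(G)\ge r+1$. The entire second half of your plan --- bridging from $\gamma_{CR}(G)$ to $CR(G)$ --- is then unnecessary, and indeed impossible.
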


Now we will use the previous two corollaries to find $D(n,0,m)$ and $d(n,0,m)$. 

\begin{theorem}\label{D(n,0,m)}
For any positive integers $n \geq 2$ and $ 0 \leq m \leq {n \choose 2}$,  
\[
D(n,0,m)=n-r,
\]
where ${r \choose 2} < m \leq {r+1 \choose 2}$.

\end{theorem}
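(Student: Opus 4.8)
The plan is to sandwich $D(n,0,m)$ between two bounds, both obtained from the corollaries to the previous two theorems, and then exhibit a single graph that realizes the claimed value. Recall the hypothesis ${r \choose 2} < m \leq {r+1 \choose 2}$, which pins down $r$ uniquely as a function of $n$ and $m$ (for $m \geq 1$; the edge case $m=0$ forces $r=1$ and should be checked separately, giving $D(n,0,0) = n-1$ via $n$ isolated vertices... wait, that has $\gamma_{CR} = n$, so one must be slightly careful and note that with $r=1$, $n-r = n-1$, and indeed a single edge plus isolated vertices, or for $m=0$ the empty graph has $\gamma_{CR}(G) = n$, so the edge case actually needs $n - r$ with $r$ chosen so that ${r\choose 2} < 0$ is impossible — I would handle $m=0$ by noting ${0\choose 2}=0 \not< 0$ but ${1 \choose 2} = 0 \geq 0$, so $r=0$ is forced, giving $D(n,0,0)=n$, realized by the empty graph).

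For the upper bound $D(n,0,m) \leq n-r$: suppose $G$ has order $n$, size $m$, and $CR(G)=0$, and let $s = \gamma_{CR}(G)$. I want to show $s \leq n-r$, equivalently $n - s \geq r$. Since $m \leq {r+1 \choose 2}$, if instead $s \geq n-r+1$, i.e. $n-s+1 \leq r$, then by Theorem \ref{M(n,0,r)} the size of $G$ is at most $M(n,0,s) = {n-s+1 \choose 2} \leq {r \choose 2} < m$, a contradiction. So $s \leq n-r$. (Equivalently, this is a direct application of Corollary \ref{cor:M(n,0,r)}: since $m > {r \choose 2}$, any graph of order $n$ and size $m$ has $\gamma_{CR}$-constrained structure forcing $\gamma_{CR} \leq n-r$ when $CR = 0$.)

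For the lower bound $D(n,0,m) \geq n-r$, I must construct a graph $\Gamma$ of order $n$, size $m$, with $CR(\Gamma)=0$ and $\gamma_{CR}(\Gamma) = n-r$. The natural candidate: take $K_{r+1}$, the complete graph on $r+1$ vertices, which has ${r+1\choose 2} \geq m$ edges, and delete ${r+1 \choose 2} - m$ edges so as to leave the graph connected with no isolated vertices (possible as long as $m \geq r$, which holds since $m > {r\choose 2} \geq r-1$, so $m \geq r$ when $r \geq 2$; for $r \leq 1$ check directly); then add $n - r - 1$ isolated vertices. This graph has $n$ vertices, $m$ edges, and $CR = 0$ (the $r+1$-vertex component contributes a single vertex to any efficient dominating set since it has diameter... hmm, not necessarily — I need the component to still have an efficient dominating set of size $1$, i.e. a dominating vertex). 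The cleaner construction: keep one vertex $v$ of $K_{r+1}$ adjacent to all others, and delete edges only among the other $r$ vertices; then $v$ dominates its whole component efficiently, the isolated vertices each dominate themselves, so $CR(\Gamma) = 0$ and $\gamma_{CR}(\Gamma) = 1 + (n-r-1) = n-r$. The main obstacle is verifying this deletion is always feasible within the size constraint and that no smaller $\gamma_{CR}$-set exists — the latter follows since the $n-r-1$ isolated vertices are forced into every dominating set and the star-dominated component needs at least one more vertex, and $CR = 0$ prohibits using more than one vertex there without overdomination. I would close by combining the two bounds.
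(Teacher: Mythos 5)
Your argument is correct and is essentially the paper's own proof: the upper bound is Corollary \ref{cor:M(n,0,r)} (which you re-derive directly from Theorem \ref{M(n,0,r)}), and your construction --- a dominating vertex $v$ joined to $r$ other vertices carrying the remaining $m-r$ edges, plus $n-r-1$ isolated vertices --- is exactly the graph $\Gamma$ used in the paper. Your side remark that $m=0$ admits no positive integer $r$ with $\binom{r}{2}<m$ is a fair observation about the theorem's statement rather than a gap in the proof.
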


\begin{proof}
By Corollary \ref{cor:M(n,0,r)}, since ${r \choose 2} < m$, $D(n,0,m)\leq n-r$.

Let $R$ be a set of $r$ vertices such that its induced subgraph has $m-r$ edges. Notice it is possible for the induced subgraph on $R$ to have $m-r$ edges since $m-r \leq {r+1 \choose 2} -r = {r \choose 2}$. Now, connect a vertex $v$ to each vertex in $R$, and let $\Gamma$ be the disjoint union of this graph with $n-r-1$ isolated vertices. Then $\Gamma$ satisfies $|V(\Gamma)|=n$, $|E(\Gamma)|=m$, and $CR(\Gamma)=0$. Since $\gamma_{CR}(\Gamma)= n-r$, $D(n,0,m)\geq n-r$. This combined with the above gives us the desired result.

\end{proof}

\begin{theorem}
For any positive integers $n\geq 2$ and $0 \leq m \leq {n \choose 2}$
\[
d(n,0,m)= 
    \begin{cases}
        n-m & \quad \text{if $0 \leq m < n-1$},\\
        1 & \quad \text{if $n-1 \leq m \leq {n \choose 2}$}.
    \end{cases}
\]
\end{theorem}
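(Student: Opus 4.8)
The plan is to prove the two cases separately, using the corollaries just established as the main tools. For the lower bound, I would invoke Corollary \ref{cor:m(n,0,r)}: if $m < n - r$ for some $r$, then any graph $G$ of order $n$ and size $m$ has $CR(G) \geq r+1 \neq 0$ when $r \geq 1$; equivalently, a graph with $CR(G) = 0$, $n$ vertices, and $m$ edges must satisfy $\gamma_{CR}(G) \geq n - m$ whenever this is meaningful. More precisely, if $\gamma_{CR}(G) = s$ and $CR(G) = 0$, then by Theorem \ref{m(n,0,r)} we have $m \geq m(n,0,s) = n - s$, so $s \geq n - m$. This gives $d(n,0,m) \geq n - m$ for all $m$, which is only a useful bound when $n - m \geq 1$, i.e. $m \leq n-1$; for larger $m$ it is subsumed by the trivial bound $\gamma_{CR}(G) \geq 1$ (a dominating set is nonempty). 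So the lower bounds in both cases come essentially for free from earlier results.

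**The work is in the constructions** achieving these bounds. For $0 \leq m < n - 1$: I would take a star $K_{1,m}$ (which has $m+1$ vertices and $m$ edges) together with $n - m - 1$ isolated vertices. This graph has order $n$, size $m$, and an efficient dominating set (the center of the star plus all the isolated vertices, total $1 + (n-m-1) = n - m$ vertices), so $CR = 0$ and one checks $\gamma_{CR} = n - m$. One subtlety: when $m = 0$ the "star" is a single vertex and we just have $n$ isolated vertices with $\gamma_{CR} = n$, consistent with $n - m = n$; this edge case should be mentioned. For $n - 1 \leq m \leq \binom{n}{2}$: I want a connected-enough graph with an efficient dominating set of size $1$, i.e. a universal vertex whose closed neighborhood is everything and which dominates every other vertex uniquely — that forces every non-center vertex to be adjacent \emph{only} to the center, giving exactly $m = n-1$ edges. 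So a single star $K_{1,n-1}$ handles $m = n - 1$ with $\gamma_{CR} = 1$. For $m > n - 1$ I cannot have $\gamma_{CR} = 1$ with a single star; instead I need $CR(G) = 0$ with $\gamma_{CR}(G) = 1$, which means $G$ has an efficient dominating set consisting of one vertex $v$, forcing $G$ to be exactly $K_{1,n-1}$ — contradiction. Hence **the real point** is that for $m > n-1$ we must instead show $d(n,0,m) = 1$ is still achievable, which requires reinterpreting: a graph with $\gamma_{CR}(G) = 1$ and $CR(G) = 0$ would indeed have to be $K_{1,n-1}$, so actually we want the minimum over graphs with $CR(G) = 0$, and we should check whether $\gamma_{CR} = 1$ is even attainable when $m > n - 1$. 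It is not — so the claim $d(n,0,m) = 1$ for $m \geq n-1$ must be read as: the minimum is $1$, achieved only at $m = n - 1$ among... no. Let me reconsider: the construction should be a $K_{1,n-1}$ plus extra edges among the leaves; but adding any leaf-leaf edge destroys the efficient domination property of $\{v\}$. However $\{v\}$ might still be a $\gamma_{CR}$-set if $CR(\{v\}) = CR(G)$; but $CR(\{v\}) = 0$ always (every vertex is dominated exactly once by the single center, provided $v$ is universal), so if $CR(G) = 0$ then $\{v\}$ is a $\gamma_{CR}$-set and $\gamma_{CR}(G) = 1$! The leaf-leaf edges do not create overdomination \emph{with respect to the set $\{v\}$}. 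So the construction is: take a universal vertex $v$ joined to $n-1$ others, and add $m - (n-1)$ arbitrary edges among those $n-1$ vertices; then $\{v\}$ dominates everything with no overdomination, so $CR(G) = 0$ and $\gamma_{CR}(G) = 1$. This works as long as $m - (n-1) \leq \binom{n-1}{2}$, which holds since $m \leq \binom{n}{2} = \binom{n-1}{2} + (n-1)$.

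**The steps, in order:** (1) State and prove the lower bound $d(n,0,m) \geq n - m$ for $m \leq n - 2$ via Theorem \ref{m(n,0,r)}, and the trivial bound $d(n,0,m) \geq 1$ for $m \geq n-1$. (2) For $0 \leq m < n-1$, give the star-plus-isolated-vertices construction, verify its order, size, that $CR = 0$, and that $\gamma_{CR} = n - m$ (the efficient dominating set of size $n-m$ is forced to be minimum since any dominating set must include each of the $n - m - 1$ isolated vertices plus at least one vertex of the star component). (3) For $n - 1 \leq m \leq \binom{n}{2}$, give the universal-vertex construction with $m - (n-1)$ extra edges among the leaves, verify $CR = 0$ via the set $\{v\}$, and conclude $\gamma_{CR} = 1$. (4) Combine. **The main obstacle** I anticipate is step (3): one must be careful to argue that adding edges among the leaves really does not change $CR(G)$ from $0$ — the key observation being that $CR$ of a graph is a \emph{minimum} over dominating sets, so exhibiting the single good set $\{v\}$ suffices, and $\gamma_{CR}(G) = 1$ then follows because no dominating set can be smaller than a singleton. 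A secondary check is confirming the formula $n - m$ correctly reduces to $\gamma_{CR} = n$ (all isolated vertices) at $m = 0$ and meshes with the $m = n-1$ boundary ($n - (n-1) = 1$, matching the second case), so the piecewise definition is consistent at the seam.
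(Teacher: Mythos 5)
Your proposal is correct and follows essentially the same route as the paper: the lower bound via Corollary \ref{cor:m(n,0,r)} (equivalently Theorem \ref{m(n,0,r)}), the star-plus-isolated-vertices construction for $m<n-1$, and a universal vertex with arbitrary extra edges among the remaining vertices for $m\geq n-1$. The detour in your step (3) resolves itself correctly --- since $\{v\}$ is a singleton, no vertex can satisfy $|N[u]\cap\{v\}|\geq 2$, so $CR(\{v\})=0$ regardless of the leaf-leaf edges --- which is exactly the observation the paper's construction relies on.
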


\begin{proof}
If $n-1\leq m\leq {n \choose 2}$, then clearly it is possible to construct a graph of order $n$ and size $m$ where one vertex is adjacent to every other vertex. This graph would then have a cardinality-redundance of $0$, so $d(n,0,m)\leq 1$. Clearly $d(n,0,m)\geq 1$, so $d(n,0,m)=1$.

If $m < n-1$, then by Corollary \ref{cor:m(n,0,r)}, with $r=n-m-1$, we see $d(n,0,m)\geq n-m$. Consider the graph $\Gamma$ that is a star graph on $m+1$ vertices unioned with $n-m-1$ isolated vertices. Then $\Gamma$ satisfies $|V(\Gamma)|=n$, $|E(\Gamma)|=m$, and $CR(\Gamma)=0$. Since $\gamma_{CR}(\Gamma)=n-m$, $d(n,0,m)\leq n-m$. This combined with the above gives us the desired result.

\end{proof}

\section{Results for  $CR(G)=1$}

In this section we find the exact values of $M(n,1,r)$, $m(n,1,r)$, $D(n,1,m)$, and $d(n,1,m)$. The following proposition gives an upper bound on $\gamma_{CR}(G)$ when $CR(G)=1$.  

\begin{proposition}\label{prop1}
If $G$ is a graph of order $n$ and $CR(G)=1$, then $n\geq 5$, and $\gamma_{CR}(G)\leq n-3$.
\end{proposition}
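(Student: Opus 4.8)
The plan is to establish the two claims separately. For the lower bound $n \geq 5$: a graph with $CR(G) = 1$ must have a $\gamma_{CR}$-set $S$ with $|S| \geq 2$ by Proposition~\ref{gammageq2}, and since $CR(G) = 1 \leq n - 2$ we need $n \geq 3$ by Proposition~\ref{crbnd}; the work is to rule out $n = 3$ and $n = 4$. For $n = 3$, every minimal dominating set either has one vertex (giving $CR = 0$ only if that vertex dominates everything, else it isn't dominating) or two vertices, and one checks by a short case analysis over the (few) graphs on $3$ vertices that $CR(G)$ is always $0$ or $\geq 2$ — in fact $CR(G) = 1$ would force exactly one overdominated vertex, which is hard to arrange with so few vertices. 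For $n = 4$ I would argue similarly: if $CR(G) = 1$ then by Propositions~\ref{minimalgamma} and~\ref{gammabnd} a $\gamma_{CR}$-set $S$ has $2 \leq |S| \leq 3$; handle $|S| = 3$ (then $S$ is a minimal dominating set of size $3$ in a $4$-vertex graph, so one vertex outside $S$ is a private neighbor and the structure is very constrained) and $|S| = 2$ (then $I(S) \geq 5$ by Proposition~\ref{infbnd}, but $|S| = 2$ on $4$ vertices bounds $I(S)$ tightly), deriving in each case that $CR(G) \neq 1$.

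For the upper bound $\gamma_{CR}(G) \leq n - 3$: let $S$ be a $\gamma_{CR}$-set, so $|S| = \gamma_{CR}(G)$ and $CR(S) = 1$. By Proposition~\ref{minimalgamma}, $S$ is a minimal dominating set, and by Proposition~\ref{gammabnd} with $k = 1$ we immediately get $|S| \leq n - 1$. The task is to improve this by $2$. Since $CR(S) = 1$, exactly one vertex $x$ is overdominated; every other vertex of $V(G) \setminus S$ has exactly one neighbor in $S$, and every vertex of $S$ has a private neighbor (by minimality). The key observation is that the single overdominated vertex $x$ satisfies $|N[x] \cap S| \geq 2$, so $x$ "uses up" at least two elements of $S$ in its closed neighborhood. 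I would count the influence: $I(S) = \sum_{v \in V(G)} |N[v] \cap S| \geq 1 \cdot (n - 1) + 2 = n + 1$ (this is just Proposition~\ref{infbnd} with $k = 1$). On the other hand, since $S$ is minimal each $s \in S$ has a private neighbor $p(s)$ (possibly $p(s) = s$ itself); these private neighbors are distinct and none of them is the overdominated vertex $x$, so they account for $|S|$ vertices among the $n - 1$ non-overdominated vertices, giving $|S| \leq n - 1$ again — I need a sharper accounting.

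The refinement I expect to need: the overdominated vertex $x$ either lies in $S$ or outside it. If $x \notin S$, then $x$ has at least two neighbors in $S$, say $s_1, s_2$; neither $s_1$ nor $s_2$ can have $x$ as a private neighbor, so the private neighbors $p(s_1), p(s_2)$ are vertices \emph{other than} $x$, and together with the $|S|$ elements of $S$ this already shows $|S| + 1 \leq n$ is too weak — instead I track that $x$ contributes to the closed neighborhoods of two distinct $s_i \in S$ while not being a private neighbor of either. A cleaner route is: consider $S' = S \setminus \{s_2\}$ (where $s_1, s_2$ both dominate $x$). Then $S'$ may fail to be dominating only at vertices that were dominated \emph{solely} by $s_2$; but $s_2$'s only private neighbor $p(s_2)$ is the unique such vertex, so $S' \cup \{p(s_2)\}$ or a suitable swap recovers domination, showing there is a dominating set of size $\leq |S|$ with $CR \leq 1$ but involving $x$ now less redundantly — pushing toward a contradiction with minimality of $|S|$ unless $|S| \leq n-3$. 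I expect the main obstacle to be precisely this structural argument: showing that when $|S| = n - 1$ or $n - 2$ one can always rearrange $S$ into a dominating set of the same or smaller size with the same cardinality-redundance $1$ but strictly fewer vertices, contradicting that $S$ is a $\gamma_{CR}$-set; handling the case $x \in S$ versus $x \notin S$, and the sub-cases according to whether $s_1$ or $s_2$ is adjacent to $p(s_2)$, will be where the care is required. The base cases $n = 5$ (where $n - 3 = 2$) should be checked directly to anchor the bound.
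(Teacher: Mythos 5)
Your treatment of the bound $n\geq 5$ (finite inspection of graphs on at most $4$ vertices) matches the paper. However, for the main claim $\gamma_{CR}(G)\leq n-3$ your proposal has a genuine gap: you never actually rule out $|S|=n-1$ or $|S|=n-2$, and the exchange you sketch does not work. You propose $S'=S\setminus\{s_2\}$ and then restoring domination by adding back ``the'' private neighbor of $s_2$; but a vertex of a minimal dominating set may have several private neighbors, so one added vertex need not restore domination, and even if it did the resulting set has size $|S|$ again, so no contradiction with the minimality of $|S|$ is obtained. The influence count $I(S)\geq n+1$ likewise only recovers $|S|\leq n-1$, as you note yourself.

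The two ideas you are missing are the following. First, since $CR(S)=1$, no two vertices of $S$ can be adjacent (two adjacent vertices of $S$ are each overdominated), so $S$ is independent and the overdominated vertex $x$ lies outside $S$; your case $x\in S$ never occurs. Second, the correct exchange is $\{x\}\cup\bigl(S\setminus N(x)\bigr)$: since $|N(x)\cap S|\geq 2$, this deletes at least two vertices and adds one, so it is \emph{strictly smaller} than $S$. If $V(G)=S\cup\{x\}$ this set is dominating and, by independence of $S$, overdominates nothing, contradicting $CR(G)=1$; if $V(G)=S\cup\{x,y\}$ one runs a short case analysis on how $y$ is attached (to $x$, to $S\setminus N(x)$, or to $N(x)$) and in each case produces a dominating set that is smaller than $S$ yet overdominates at most one vertex, contradicting that $S$ is a $\gamma_{CR}$-set (the last subcase uses $n\geq 5$ applied to an induced subgraph to get $|N(x)|\geq 3$). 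Without this exchange and the independence observation, the argument does not close.
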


\begin{proof}
The condition that $n\geq 5$ can be verified by inspection of all graphs with less than $5$ vertices.

Let $S$ be a $\gamma_{CR}$-set of $G$, and let $x$ be the vertex of $G$ overdominated by $S$. Notice that no two vertices of $S$ are adjacent, else they would both be overdominated, thus, $x \notin S$. If $V(G)=S \cup \{x\}$, then $\{x\}\cup (S\setminus N(x))$ is a dominating set that overdominates no vertices. Therefore there is some vertex $y\neq x$ such that $y \notin S$. If $V(G)=S \cup \{x,y\}$, then when either $x$ or some vertex in $S\setminus N(x)$ is adjacent to $y$, $\{x\} \cup (S\setminus N(x))$ overdominates at most the vertex $y$, but $|\{x\} \cup (S\setminus N(x))|< |S|$, contradicting that $S$ is a $\gamma_{CR}$-set of $G$. When $x$ is not adjacent to $y$ and some vertex in $N(x)$ is adjacent to $y$, then $\{x,y\}\cup (S\setminus N(x))$ is a dominating set that overdominates one vertex. Notice, however, that $G$ is the induced subgraph on $N(x)\cup \{x,y\}$ unioned with the isolated vertices $S\setminus N(x)$. Therefore, by the first part of this theorem, $|N(x)\cup \{x,y\}|\geq 5$, so $|N(x)|\geq 3$. Thus, $|\{x,y\}\cup (S\setminus N(x))|<|S|$, contradicting that $S$ is a $\gamma_{CR}$-set. This implies that $|S|\leq n-3$, i.e., $\gamma_{CR}(G)\leq n-3$.

\end{proof}

By Proposition \ref{prop1} we see that $M(n,1,r)=m(n,1,r)=0$ if $r \geq n-2$, and $D(n,1,m)$, $d(n,1,m) \leq  n-3$ for all $m$.  

For the following two theorems, we consider $2 \leq r \leq n-3$ and find the exact value of $M(n,1,r)$ and $m(n,1,r).$ Note that if $CR(G) \geq 1$, then $r \geq 2$, so when $CR(G) \geq 1$, we will only concern ourselves with $r \geq 2.$

\begin{theorem}\label{Mn1r}
Let $n\geq 5$ and $ 2\leq r\leq n-3$, then 
\[
M(n,1,r)={n-r \choose 2}+ (n-2).
\]
\end{theorem}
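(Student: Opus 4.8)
The plan is to prove the upper bound $|E(G)|\le\binom{n-r}{2}+(n-2)$ for every admissible $G$, and then to exhibit matching extremal graphs.

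\emph{Upper bound.} Let $G$ have $|V(G)|=n$, $CR(G)=1$, $\gamma_{CR}(G)=r$, and let $S$ be a $\gamma_{CR}$-set, so $|S|=r$ and $CR(S)=1$. Two structural facts drive the count. First, $S$ is independent: adjacent $s,s'\in S$ would both be overdominated, forcing $CR(S)\ge 2$. Second, the unique overdominated vertex $x$ lies outside $S$, since a vertex of an independent dominating set has exactly one element of $S$ in its closed neighbourhood; hence every vertex of $V(G)\setminus S$ other than $x$ has exactly one neighbour in $S$, while $|N(x)\cap S|=t$ for some $t\in\{2,\dots,r\}$. Partitioning the edges: none lie inside $S$; at most $\binom{n-r}{2}$ lie inside $V(G)\setminus S$; and exactly $(n-r-1)+t$ join $S$ to $V(G)\setminus S$. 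If the subgraph induced on $V(G)\setminus S$ is complete then $x$ is adjacent to all of $V(G)\setminus S$, so $t=r$ would give $N[x]=V(G)$ and make $\{x\}$ an efficient dominating set, contradicting $CR(G)=1$; thus $t\le r-1$ in that case, while if the induced subgraph is not complete the internal count drops to at most $\binom{n-r}{2}-1$. Either way $|E(G)|\le\binom{n-r}{2}+(n-2)$.

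\emph{Lower bound.} I would build extremal graphs, splitting on $r$. For $r\ge 3$: take an independent set $S=\{s_1,\dots,s_r\}$, put a clique on the remaining vertices $\{x,v_1,\dots,v_{n-r-1}\}$ (here $n-r-1\ge 2$ since $r\le n-3$), join $x$ to $s_1,\dots,s_{r-1}$, join $s_r$ to $v_1$ and $v_2$, and join every remaining $v_i$ to $s_1$. For $r=2$: use the same skeleton but delete the edge $xv_1$ from the clique, join $x$ to both $s_1,s_2$, join $v_1$ to $s_1$ and $v_2$ to $s_2$, and join the remaining $v_i$ to $s_1$. In each case a direct count gives $|E(G)|=\binom{n-r}{2}+(n-2)$, and one checks that $S$ is a dominating set whose only overdominated vertex is $x$, so $CR(S)=1$ and therefore $CR(G)\le 1$; once $CR(G)=1$ is established, $S$ witnesses $\gamma_{CR}(G)\le r$.

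\emph{The main obstacle.} What remains, and where the real work lies, is proving $CR(G)=1$ (no dominating set has $CR=0$) and $\gamma_{CR}(G)=r$ (no dominating set of size $<r$ has $CR=1$). For $r\ge 3$ both follow from the fact that each of $s_2,\dots,s_{r-1}$ has closed neighbourhood exactly $\{s_j,x\}$, so any dominating set contains $x$ or all of $s_2,\dots,s_{r-1}$. If $x$ lies in a dominating set $T$, then $N[x]=V(G)\setminus\{s_r\}$ forces $T=\{x,s_r\}$ and $N[x]\cap N[s_r]=\{v_1,v_2\}$ gives $CR(T)\ge 2$, killing both a $CR=0$ and a $CR=1$ candidate. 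If $x\notin T$, then $s_2,\dots,s_{r-1}\in T$; for $CR=0$ this overdominates $x$ when $r\ge 4$, and for $r=3$ it forces $T\subseteq\{s_2,s_r\}$, which fails to dominate; for $CR=1$, the three pairwise disjoint sets $\{s_2,\dots,s_{r-1}\}$, $N[s_1]\setminus\{x\}$ (which $T$ must meet to dominate $s_1$), and $\{s_r,v_1,v_2\}$ (which $T$ must meet to dominate $s_r$) force $|T|\ge r$. For $r=2$, $\gamma_{CR}(G)\ge 2$ is immediate from Proposition~\ref{gammageq2}, and $CR(G)\ge 1$ follows from a cardinality obstruction: an efficient dominating set $T$ would satisfy $\sum_{t\in T}|N[t]|=n$, but in this graph the closed-neighbourhood sizes are $3$ (once), $n-2$ (twice), and $n-1$ (otherwise), and no sub-multiset of these sums to $n$ when $n\ge 5$. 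I expect the delicate point to be the bookkeeping for $\gamma_{CR}(G)\ge r$, ensuring no alternative small dominating set with $CR=1$ slips through the dense part $V(G)\setminus S$; the (near-)clique structure there helps, since an independent set meets $V(G)\setminus S$ in at most one vertex.
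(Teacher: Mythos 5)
Your upper bound is correct and is essentially the paper's argument: partition the edges by $S$, $\{x\}$, and $B=V(G)\setminus(S\cup\{x\})$, use independence of $S$ and the fact that each vertex of $B$ has exactly one neighbour in $S$, and recover the missing ``$-1$'' from the observation that $N[x]=V(G)$ would make $\{x\}$ an efficient dominating set; your two-case bookkeeping (clique on $V(G)\setminus S$ versus not) is just a reorganization of the paper's single count. Where you genuinely diverge is the extremal construction. The paper uses one graph for all $r$: $x'$ adjacent to all of $S'$ and to all of $B'$ except a special vertex $y$, with $s_1$ covering $B'\setminus\{y\}$, $s_2$ covering $y$, and $B'$ a clique; the verification is then a case analysis of which vertices can lie in a $\gamma_{CR}$-set (including a special subcase $|S'|=|B'|=2$ with two distinct $\gamma_{CR}$-sets). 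You instead build two graphs: for $r\geq 3$ you make $s_2,\dots,s_{r-1}$ pendant at $x$, which buys you a very clean forcing argument (any dominating set avoiding $x$ must hit three pairwise disjoint sets of total size $r$, and any dominating set containing $x$ overdominates at least $\{v_1,v_2\}$), and for $r=2$ you rule out $CR=0$ by an influence-type counting of closed-neighbourhood sizes, in the spirit of Proposition \ref{infbnd}. Both routes are valid; yours trades a uniform construction for easier verification. One sentence of yours is imprecise: ``$N[x]=V(G)\setminus\{s_r\}$ forces $T=\{x,s_r\}$'' is not literally true, since a dominating set containing $x$ need only meet $N[s_r]=\{s_r,v_1,v_2\}$ and may contain $v_1$ or $v_2$ or further vertices; however, in each of those cases $\{x\}$ together with that vertex already overdominates at least two vertices, so your conclusion $CR(T)\geq 2$ stands, and this is a wording issue rather than a gap.
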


\begin{proof}
Let $G$ be a graph with $n$ vertices such that $CR(G)=1$ and $\gamma_{CR}(G)=r$. Let $S$ be a $\gamma_{CR}$-set, let $x$ be the overdominated vertex, and let $B=V(G)\setminus (S\cup\{x\})$. Notice $x \notin S$ if $CR(G) = 1$, therefore, $|B|=n-r-1$. Since $CR(G)=1$, no two vertices in $S$ are adjacent. There are at most $r$ edges between $S$ and $\{x\}$, and precisely $n-r-1$ edges between $S$ and $B$. There are at most $n-r-1$ edges between $\{x\}$ and $B$, and at most ${n-r-1 \choose 2}$ edges between the vertices in $B$. Also the $x$ must not be connected to at least one vertex in $S$ or $B$, otherwise $\{x\}$ would be a dominating set that overdominates no vertices. Therefore, 

\[
|E(G)|\leq r+(n-r-1)+(n-r-1)+{n-r-1 \choose 2}-1={n-r \choose 2}+(n-2).
\] This implies that $M(n,1,r)\leq {n-r \choose 2}+(n-2)$.

Now consider a graph $\Gamma$ whose vertices are the disjoint union $V(\Gamma)=S'\cup \{x'\}\cup B'$, where $|S'|=r$ and $|B'|=n-r-1$. Notice that $|B'|\geq 2$. Connect every vertex in $S'$ to $x'$, connect one vertex $s_1 \in S'$ to every vertex in $B'$ except one vertex $y\in B'$, connect $s_2 \in S'$ where $s_2\neq s_1$ to $y$, and connect $x'$ to every vertex in $B'$ except $y$. Finally, add an edge between every pair of vertices in $B'$. Let $z\in B'$ be a vertex such that $z \neq y$. Since $|B'|\geq 2$, such a vertex $z$ exists. Notice that $S'$ is a dominating set and overdominates one vertex, $x'$. The vertex $x'$ cannot be in any $\gamma_{CR}$-set of $\Gamma$ since $x'$ is adjacent to every other vertex except $y$, and if $y$ and $x'$ are both in a $\gamma_{CR}$-set, then $z$ and $s_2$ would both be overdominated. No vertex in $B'\setminus \{y\}$ can be in a $\gamma_{CR}$-set, since the only ways to dominate $s_2$ would be if $s_2$, $x'$ or $y$ is in the same $\gamma_{CR}$-set, but in each of these cases more than one vertex is overdominated. If $y$ is in some $\gamma_{CR}$-set of $\Gamma$, then by the above arguments, the only way that the vertices in $S'\setminus \{s_2\}$ can be dominated is if $S'\setminus \{s_2\}$ is in the same $\gamma_{CR}$-set as $y$. But if $|B'|>2$ or $|S'|>2$, then more than one vertex would be overdominated, so $|B'|=2$ and $|S'|=2$. In this case, the only $\gamma_{CR}$-sets of $\Gamma$ are $S'$ and $\{y,s_1\}$ ($\{y,s_1\}$ overdominates only $z$), so $CR(\Gamma)=1$ and $\gamma_{CR}(\Gamma)=r=2$. In every other case, $S'$ is the only $\gamma_{CR}$-set of $\Gamma$, so $CR(\Gamma)=1$ and $\gamma_{CR}(\Gamma)=r$. Since $|E(\Gamma)|={n-r \choose 2}+(n-2)$, $M(n,1,r)\geq {n-r \choose 2}+(n-2)$. This combined with the above gives us the desired result.

\end{proof}

\begin{theorem}\label{mn1r}
Let $n\geq 5$, and let $2\leq r\leq n-3$, then 
\[
m(n,1,r)=n-r+1.
\]
\end{theorem}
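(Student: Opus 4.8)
The plan is to prove the two inequalities $m(n,1,r)\ge n-r+1$ and $m(n,1,r)\le n-r+1$ separately, in parallel with the arguments used for Theorems~\ref{M(n,0,r)} and~\ref{Mn1r}.

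For the lower bound, let $G$ be an arbitrary graph with $|V(G)|=n$, $CR(G)=1$, and $\gamma_{CR}(G)=r$, fix a $\gamma_{CR}$-set $S$, and let $x$ be the unique overdominated vertex. As noted in the proof of Theorem~\ref{Mn1r}, no two vertices of $S$ are adjacent (otherwise both would be overdominated) and $x\notin S$; set $B=V(G)\setminus(S\cup\{x\})$, so $|B|=n-r-1$. Every $b\in B$ is dominated by $S$ but not overdominated, and since $b\notin S$ this means $b$ has exactly one neighbour in $S$, giving exactly $n-r-1$ edges between $S$ and $B$. Since $x\notin S$ and $|N[x]\cap S|\ge 2$, there are at least $2$ further edges joining $x$ to $S$, disjoint from the previous ones. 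Hence $|E(G)|\ge (n-r-1)+2=n-r+1$, so $m(n,1,r)\ge n-r+1$.

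For the upper bound I will construct a graph $\Gamma$ of order $n$ with exactly $n-r+1$ edges, $CR(\Gamma)=1$, and $\gamma_{CR}(\Gamma)=r$. Take $V(\Gamma)=S'\cup\{x'\}\cup B'$ with $S'=\{s_1,\dots,s_r\}$ and $|B'|=n-r-1$ (note $|B'|\ge 2$ since $r\le n-3$). Join $x'$ to $s_1$ and $s_2$, join $s_1$ to every vertex of $B'$, and add no other edges; then $|E(\Gamma)|=2+(n-r-1)=n-r+1$ and $s_3,\dots,s_r$ are isolated. Direct inspection shows $CR(S')=1$, with $x'$ the only overdominated vertex. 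To see $CR(\Gamma)=1$ it suffices to rule out an efficient dominating set: every dominating set must contain the isolated vertices $s_3,\dots,s_r$ and must dominate the component $C$ on $\{s_1,s_2,x'\}\cup B'$; if such a set contains $s_1$, then $s_2$ (whose only neighbour is $x'$) forces $x'$ into the set, and $x'$ is then dominated by both $s_1$ and $x'$ itself; if it avoids $s_1$, then all $\ge 2$ vertices of $B'$ must be in it, and together they overdominate $s_1$. Thus $CR(\Gamma)\ge 1$, hence $CR(\Gamma)=1$.

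Finally, for $\gamma_{CR}(\Gamma)=r$: any dominating set $D$ of $\Gamma$ with $CR(D)=1$ contains $s_3,\dots,s_r$, and $D\cap V(C)$ must dominate $C$. No single vertex of $C$ dominates $C$, so $|D\cap V(C)|\ge 2$ and $|D|\ge r$; and among the two‑element dominating sets of $C$ only $\{s_1,s_2\}$ has $CR\le 1$ — covering all of $B'$ forces $s_1\in D\cap V(C)$ because $|B'|\ge 2$ and the vertices of $B'$ are pairwise nonadjacent, and then the only partner that avoids overdominating both $x'$ and $s_1$ is $s_2$, whereas $\{s_1,x'\}$ overdominates two vertices. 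Since $S'$ realises the value $r$, we get $\gamma_{CR}(\Gamma)=r$, completing the construction and hence the proof. The step I expect to be the main obstacle is this last verification that $\{s_1,s_2\}$ is the unique valid two‑vertex dominating set inside $C$, which is precisely where the hypothesis $r\le n-3$ (equivalently $|B'|\ge 2$) is essential.
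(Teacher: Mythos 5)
Your proof is correct and follows essentially the same route as the paper's: the identical lower-bound count (at least two edges between $x$ and $S$ plus exactly $n-r-1$ edges between $S$ and $B$) and the identical extremal construction, with the verification merely reorganized around the nontrivial component. One small imprecision: when ruling out an efficient dominating set containing $s_1$, dominating $s_2$ forces $s_2$ \emph{or} $x'$ (not necessarily $x'$) into the set, but either choice overdominates $x'$, so the conclusion stands.
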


\begin{proof}
Let $G$ be a graph with $n$ vertices such that $CR(G)=1$ and $\gamma_{CR}(G)=r$. Let $S$ be a $\gamma_{CR}$-set of $G$, let $x$ be the vertex overdominated by $S$, and let $B$ be the set of $n-r-1$ vertices disjoint from $S$ and $\{x\}$. Since $x$ is dominated by $S$, there are at least two vertices in $S$ that are adjacent to $x$. There are precisely $n-r-1$ edges between $S$ and $B$. Therefore, $|E(G)|\geq 2+(n-r-1)=n-r+1$. Since $G$ was arbitrary, this implies that $m(n,1,r)\geq n-r+1$.

Now consider a graph $\Gamma$ whose vertices are the disjoint union $V(\Gamma)=S'\cup \{x'\}\cup B'$, where $|S'|=r$ and $|B'|=n-r-1$. Notice that $|B'|\geq 2$. Connect two vertices $s_1,s_2 \in S'$ to $x'$, and connect $s_1$ to each vertex in $B'$. Notice that $S'$ is a dominating set of $\Gamma$, and $CR(S')=1$. If $s_1$ is not in some $\gamma_{CR}$-set $H$, then each vertex in $B'$ must be in $H$, and in order to dominate $s_2$ and $x'$, $s_2$ or $x'$ must be in $H$. However, since we must have $S'\setminus \{s_1,s_2\}\subset H$, $|H|>|S'|$, contradicting that $H$ is a $\gamma_{CR}$-set since $H$ overdominates $s_1$. Therefore, $s_1$ is in every $\gamma_{CR}$-set and to dominate $s_2$ and $x'$ while only overdominating one vertex, $s_2$ must be in every $\gamma_{CR}$-set. Clearly, $S'\setminus \{s_1,s_2\}$ is in every $\gamma_{CR}$-set, so $S'$ is the unique $\gamma_{CR}$-set of $\Gamma$. Therefore, $CR(\Gamma)=1$ and $\gamma_{CR}(\Gamma)=r$. Since $|E(\Gamma)|=n-r+1$, we have that $m(n,1,r)\leq n-r+1$. This combined with the above gives us the desired result.

\end{proof}

Next we will consider $D(n,1,m)$ and $d(n,1,m)$ where $m$ depends on $n$. 

\begin{theorem}\label{Dn1r}
Let $n\geq 5$. Then

\[
D(n,1,m) = 
    \begin{cases}
        n-3 & \text{if $4\leq m \leq n+1$}, \\
        r & \text{if $n+2 \leq m \leq {n-1 \choose 2}$ and }\\
         & \qquad \text{${n-(r+1) \choose 2} + (n-2) < m \leq {n-r \choose 2} + (n-2)$},\\
        0 &\text{if $m > {n-1 \choose 2}$}.\\
    \end{cases}
\]
\end{theorem}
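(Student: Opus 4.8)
The plan is to treat each of the three ranges of $m$ separately, using the previously established bounds as the main tools. For the top range $m > \binom{n-1}{2}$, I would argue that no graph of order $n$ and size exceeding $\binom{n-1}{2}$ can have $CR(G)=1$: by Theorem \ref{Mn1r} the maximum size of a graph with $CR(G)=1$ and $\gamma_{CR}(G)=r$ (for $2\le r\le n-3$) is $\binom{n-r}{2}+(n-2)$, which is maximized at $r=2$, giving $\binom{n-2}{2}+(n-2)=\binom{n-1}{2}$; and by Proposition \ref{gammageq2} together with Proposition \ref{prop1}, $CR(G)=1$ forces $2\le\gamma_{CR}(G)\le n-3$. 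Hence for $m>\binom{n-1}{2}$ there is no such graph and $D(n,1,m)=0$.

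For the middle range $n+2\le m\le\binom{n-1}{2}$, I would use the monotonicity already noted after Theorem \ref{Mn1r}: $M(n,1,r)=\binom{n-r}{2}+(n-2)$ is strictly decreasing in $r$. So if $m>\binom{n-(r+1)}{2}+(n-2)$, then no graph with $CR(G)=1$ can have $\gamma_{CR}(G)\ge r+1$ (its size would be at most $M(n,1,r+1)<m$), giving $D(n,1,m)\le r$. For the matching lower bound, I would take the extremal graph $\Gamma$ from the proof of Theorem \ref{Mn1r} realizing $M(n,1,r)$ — which has $CR=1$, $\gamma_{CR}=r$, and exactly $\binom{n-r}{2}+(n-2)$ edges — and delete edges within the block $B'$ one at a time. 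Deleting an edge inside $B'$ (keeping the vertices of $B'$ pairwise reachable via $s_1$ or $x'$ for domination is unaffected, since $B'$-vertices are dominated by $s_1$) should preserve $CR(\Gamma)=1$ and $\gamma_{CR}(\Gamma)=r$; I would need to check that the argument in Theorem \ref{Mn1r} identifying $S'$ as the unique $\gamma_{CR}$-set still goes through after edge removal, which it should since that argument only used edges incident to $x'$, $s_1$, $s_2$, $y$. This lets $m$ range down over $\binom{n-(r+1)}{2}+(n-2)+1,\dots,\binom{n-r}{2}+(n-2)$ for each $r$ with $2\le r\le n-3$, covering the whole interval $n+2\le m\le\binom{n-1}{2}$.

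For the bottom range $4\le m\le n+1$, I would show $D(n,1,m)=n-3$. The upper bound is immediate from Proposition \ref{prop1}. For the lower bound I need, for each $m$ in this range, a graph of order $n$ and size $m$ with $CR(G)=1$ and $\gamma_{CR}(G)=n-3$. Here I would use the minimum-construction idea behind Theorem \ref{mn1r} with $r=n-3$: take $S'$ of size $n-3$, a vertex $x'$, and $|B'|=2$, connect $s_1,s_2\in S'$ to $x'$ and $s_1$ to both vertices of $B'$, which gives $n-r+1=4$ edges, $CR=1$, $\gamma_{CR}=n-3$. To reach larger sizes $m=5,\dots,n+1$ I would add edges that do not decrease $\gamma_{CR}$ or change $CR$ — for instance adding the single edge inside $B'$, or edges from $x'$ into $B'$, or carefully chosen edges among $S'\cup\{x'\}$ — again re-verifying that $S'$ remains the unique minimum $\gamma_{CR}$-set.

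The main obstacle I anticipate is the bottom range: unlike the middle range, where I can piggyback on the already-analyzed extremal family of Theorem \ref{Mn1r}, here I must exhibit a family of graphs achieving $\gamma_{CR}=n-3$ across a whole interval of small sizes and check in each case both that $CR$ stays equal to $1$ (no dominating set overdominates zero vertices, and some dominating set overdominates exactly one) and that no dominating set smaller than $n-3$ achieves $CR=1$. Establishing that $\gamma_{CR}(G)=n-3$ exactly — i.e. ruling out smaller $\gamma_{CR}$-sets — is the delicate part, and I would handle it by the same style of case analysis on which vertex dominates the "isolated-like" vertices that appears in the proofs of Propositions \ref{prop1} and Theorem \ref{mn1r}.
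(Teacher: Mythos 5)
Your upper bounds and your treatment of the case $m>\binom{n-1}{2}$ match the paper's argument, but both of your lower-bound constructions have genuine gaps. The serious one is in the bottom range $4\le m\le n+1$. Starting from the $4$-edge graph of Theorem \ref{mn1r} (edges $[s_1,x']$, $[s_2,x']$, $[s_1,b_1]$, $[s_1,b_2]$) and adding ``the single edge inside $B'$'' destroys the example immediately: once $[b_1,b_2]$ is present, $N[s_2]=\{s_2,x'\}$ and $N[b_1]=\{b_1,s_1,b_2\}$ partition the nontrivial component, so $\{s_2,b_1\}$ together with the isolated vertices is an efficient dominating set and $CR(G)$ drops to $0$. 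Your other suggested additions fail as well: putting edges from $x'$ to both $b_1$ and $b_2$ makes $\{x'\}$ dominate the whole component, and any edge inside $S'\cup\{x'\}$ either creates a dominating vertex for the component or lets one isolated vertex be dropped from the dominating set, collapsing $\gamma_{CR}$ below $n-3$. This is precisely why the paper's construction is not monotone in $m$: at $m=6$ it simultaneously adds $[b_1,b_2]$, \emph{deletes} $[s_1,b_2]$, and adds $[s_2,b_2]$, and only afterwards grows the graph by attaching further $S$-vertices to $x$. Some rerouting of this kind is unavoidable here, and finding it is the content of the proof rather than a routine re-verification.

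The middle range has a smaller but still real gap. The uniqueness argument in Theorem \ref{Mn1r} \emph{does} use an edge internal to $B'$: to rule out $\{x',y\}$ as a $\gamma_{CR}$-set it invokes the vertex $z\in B'\setminus\{y\}$ adjacent to $y$, so that both $z$ and $s_2$ are overdominated. If you delete all of $y$'s edges into $B'$, then $N[x']\cap N[y]=\{s_2\}$, and $\{x',y\}$ becomes a dominating set of size $2$ overdominating a single vertex, which forces $\gamma_{CR}=2$ and ruins the construction for $r\ge 3$. So you must preserve at least one $B'$-edge at $y$; but if you restrict deletions to edges avoiding $x',s_1,s_2,y$, only $\binom{n-r-2}{2}$ edges are available, which is too few when $r=n-4$ and $m=n+2$ (two deletions needed, one edge available). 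The idea can be patched with a more careful choice of deleted edges and a re-run of the case analysis, but as stated it neither justifies the key claim nor covers the whole band; the paper sidesteps this by building the middle-range graphs from below, adding edges from $x$ into $B$, and it handles $r=2$ and $r\ge 3$ by separate constructions.
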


\begin{proof}
Let $n\geq 5$.  

Case 1: Assume $4\leq m \leq n+1$.

Let $G$ be a graph whose vertices are the disjoint union $V(G)=S\cup \{x\}\cup B$, where $|S|=n-3$ and $|B|=2$. Now in $G$, connect two vertices $s_1,s_2 \in S$ to $x$ and connect $s_1$ to each vertex in $B$; $G$ has $4$ edges and it can be easily verified that $CR(G)=1$ and $\gamma_{CR}(G)=n-3$. Now let $b_1 \in B$ and connect $x$ to $b_1$; $G$ now has $5$ edges and again it can be easily verified that $CR(G)=1$ and $\gamma_{CR}(G)=n-3$. Next, connect $b_1$ to the other vertex of $B$, say $b_2$, delete the edge $[s_1,b_2]$, and connect $s_2$ to $b_2$; $G$ has $6$ edges and again satisfies $CR(G)=1$ and $\gamma_{CR}(G)=n-3$. Finally, when $m > 6$, connect $m-6$ vertices in $S\setminus \{s_1,s_2\}$ to $x$; $G$ now has $m$ edges. Now, $x$ cannot be in any $\gamma_{CR}$-set of $G$, since then the only ways to dominate $b_2$ is if $b_1$, $s_2$ or $b_2$ are in the same $\gamma_{CR}$-set as $x$, and in each case more than one vertex will be overdominated. If $b_1$ is in some dominating set of $G$, then, in order to dominate $s_2$, either $x$, $s_2$, or $b_2$ must be in the same dominating set. In each case more than one vertex is overdominated. If $b_2$ is in some $\gamma_{CR}$-set, then by the above neither $x$ nor $b_1$ can be in the same set. Thus $S\setminus \{s_2\}$ must be in the same $\gamma_{CR}$-set as $b_2$, but $x$ and $b_1$ must both be overdominated. Since $S$ overdominates one vertex, we have that $CR(G)=1$, and $\gamma_{CR}(G)=n-3$. Therefore, $D(n,1,m)\geq n-3$. By Proposition \ref{prop1}, $D(n,1,m)\leq n-3$, hence, $D(n,1,m)=n-3$.\\

Case 2:  Assume $n+2 \leq m \leq {n-1 \choose 2}$ and ${n-(r+1) \choose 2} + (n-2) < m \leq {n-r \choose 2} + (n-2)$. 

Note this implies that $ 2 \leq r \leq n-4$. By Theorem \ref{Mn1r}, and since ${n - (r+1) \choose 2}+(n-2) < m$, we have that $D(n,1,m)\leq r$.

Let $G$ be a graph whose vertices are the disjoint union 
$V(G)=S\cup \{x\}\cup B$, where $|S|=r$ and $|B|=n-r-1$. Since $r\leq n-4$, $|B|\geq 3$. First consider the case where $r=2$, and let $S=\{s_1,s_2\}$. Connect $s_1$ and $s_2$ to $x$. Connect $s_1$ to each vertex in $B$ except for some vertex $b\in B$, and connect $s_2$ to $b$. Connect each pair of vertices in $B$ so that $B$ induces a complete graph. Note that $m-\left(\left(n-1\right)+{n-3 \choose 2}\right) < n-3$, therefore we can connect $x$ to $m-((n-1)+{n-3 \choose 2})$ vertices in $B$. We have that
\[
|E(G)|=2+(n-2-1)+{n-2-1 \choose 2}+\left(m-\left(\left(n-1\right)+{n-3 \choose 2}\right)\right)=m.
\]
Since the number of edges from $x$ to $B$ is strictly less than the number of vertices in $B$, we can arrange so that $x$ is not adjacent to $b$. Notice that $S$ is a dominating set that overdominates only $x$. If $x$ is in some $\gamma_{CR}$-set of $G$, then in order to dominate $b$, either $s_2$ or some other vertex $b'\in B$ must be in the same $\gamma_{CR}$-set as $x$. Since $s_2$ is adjacent to $x$, $s_2$ and $x$ cannot lie in the same $\gamma_{CR}$-set. For all $b'\in B$, $\{x,b'\}$ is a dominating set of $G$ that overdominates either $s_1$ or $s_2$. If for some $b'\in B$, $b'$ lies in a $\gamma_{CR}$-set of $G$ that does not contain $x$, then because $b'$ is adjacent to each vertex in $B$, no other vertex in $B$ is in the same $\gamma_{CR}$-set as $b'$, so $s_1$ or $s_2$ lies in the same $\gamma_{CR}$-set as $b'$. In both cases, however, at least one vertex is overdominated. Therefore, $CR(G)=1$ and $\gamma_{CR}(G)=2$, so $D(n,1,m)\geq 2$. Since $D(n,1,m)\leq 2$, $D(n,1,m)=2$. 

Now, consider the case where $r\geq3$. Redefine $G$ (keeping the same notation for the vertex set) as follows: connect each vertex in $S$ to $x$, connect a vertex $s_1\in S$ to every vertex in $B$ except one vertex $b_2\in B$, and connect a vertex $s_2 \in S$, $s_2\neq s_1$, to $b_2$. Connect vertices in $B$ so that $B$ induces a complete graph minus the edge $[b_1,b_2]$, and connect $x$ to a vertex $b_3\in B$, where $b_3\neq b_1,b_2$. We have that

\[
|E(G)|=r+(n-r-1)+\left({n-r-1 \choose 2}-1\right)+1={n-r-1 \choose 2}+(n-2)+1.
\]
Notice that $S$ is a dominating set that overdominates one vertex, $x$. If $x$ is in some $\gamma_{CR}$-set of $G$, then in order to dominate $b_2$, either $s_2$, $b_2$, or some $b'\in B$, $b'\neq b_1$ must be in the same $\gamma_{CR}$-set as $x$. However if $s_2$, $b_2$, or $b'$ is in the same $\gamma_{CR}$-set as $x$, then more than one vertex will be overdominated. Since $S$ overdominates only one vertex, $x$ cannot be contained in any $\gamma_{CR}$-set. Since each vertex in $V(G)\setminus \{x\}$ dominates exactly one vertex in $S$, every $\gamma_{CR}$-set of $G$ must contain at least $r$ elements. Since $r\geq3$, every $\gamma_{CR}$-set contains either two vertices in $S$ or two vertices in $B$. In the former case, $x$ is overdominated, and in the latter, $b_3$ is overdominated.

Hence, $CR(G)=1$ and $\gamma_{CR}(G)=r$.
Thus, in the case that $m={n-r-1 \choose 2}+(n-2)+1$, $D(n,1,m)\geq r$, so we can conclude that $D(n,1,m)=r$. 

Now, if $m > {n-(r+1) \choose 2}+(n-2)+1$, then connect $b_1$ to $b_2$ and connect $x$ to 
\[
m -\left({n-(r+1) \choose 2}+(n-2)+1\right)-1=m - {n-(r+1) \choose 2}-n
\]
vertices in $B\setminus \{b_3\}$. Since $m\leq {n-r \choose 2}+(n-2)$,
\[
m - {n-(r+1) \choose 2}-n\leq {n-r \choose 2}+(n-2) - {n-(r+1) \choose 2}-n=n-r-3,
\]
and because $|B\setminus \{b_3\}|=n-r-2$, it is always possible to arrange so that $x$ is not adjacent to $b_2$. Now, we have that
\begin{align*}
|E(G)|&=\left({n-(r+1) \choose 2}+(n-2)+1\right)\\ 
&\qquad +1+\left(m-\left({n-(r+1) \choose 2}+(n-2)+1\right)-1\right)\\
&=m.
\end{align*}

Notice that $S$ is a dominating set of $G$ that only overdominates one vertex, $x$. By a similar argument as above, $x$ cannot be in any $\gamma_{CR}$-set of $G$, and any $\gamma_{CR}$-set containing any element in $B$ must have at least $r$ elements and overdominate at least one vertex. Thus, $CR(G)=1$ and $\gamma_{CR}(G)=r$, so $D(n,1,m)\geq r$. Therefore $D(n,1,m)=r$.\\

Case 3:  Assume $m > {n-1 \choose 2}$.  

Recall by Proposition \ref{gammageq2} if $CR(G) = 1$, then $\gamma_{CR}(G) \geq 2$. The desired result now follows from Theorem \ref{Mn1r}.
\end{proof}

Now we will turn our attention to finding the value of $d(n, 1, m)$.  Note that if $CR(G) \geq 1$, then $\gamma_{CR}(G) \geq 2$.  This implies that if $d(n,1,m) \neq 0$, then $d(n,1,m) \geq 2$.

\begin{theorem} \label{dn1m}
If $n \geq 5$ then 
\[
d(n,1,m) = 
    \begin{cases}
        n-m+1 & \quad \text{if $4 \leq m \leq n-1$},\\
        2 & \quad \text{if $n \leq m \leq {n-1 \choose 2}$},\\
        0 & \quad \text{if $m > {n-1 \choose 2}$}.\\
    \end{cases}
\]
\end{theorem}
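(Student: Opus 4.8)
The proof naturally splits into the three ranges of $m$ appearing in the statement, and in each range we argue the lower bound on $d(n,1,m)$ separately from the upper bound. The first and third cases reduce quickly to Theorems \ref{mn1r} and \ref{Mn1r} together with Propositions \ref{gammageq2} and \ref{prop1}; the second case, $n\le m\le {n-1 \choose 2}$, is where the work is.

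\textbf{Case $4\le m\le n-1$.} For the lower bound, suppose $CR(G)=1$ and $|E(G)|=m$, and put $r=\gamma_{CR}(G)$. By Propositions \ref{gammageq2} and \ref{prop1} we have $2\le r\le n-3$, so Theorem \ref{mn1r} applies and gives $m=|E(G)|\ge m(n,1,r)=n-r+1$, hence $r\ge n-m+1$; thus $d(n,1,m)\ge n-m+1$. For the matching upper bound, set $r=n-m+1$, which lies in $[2,n-3]$ precisely because $m\in[4,n-1]$, and take the graph $\Gamma$ from the proof of Theorem \ref{mn1r}: it has order $n$, size $n-r+1=m$, satisfies $CR(\Gamma)=1$ and $\gamma_{CR}(\Gamma)=r=n-m+1$, so $d(n,1,m)\le n-m+1$.

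\textbf{Case $m>{n-1 \choose 2}$.} If $CR(G)=1$, then $\gamma_{CR}(G)=r\ge 2$ by Proposition \ref{gammageq2}, and from the formula of Theorem \ref{Mn1r} the quantity $M(n,1,r)={n-r \choose 2}+(n-2)$ is decreasing in $r$, so $|E(G)|\le M(n,1,2)={n-2 \choose 2}+(n-2)={n-1 \choose 2}$. Hence there is no graph of order $n$ with $CR=1$ and more than ${n-1 \choose 2}$ edges, and $d(n,1,m)=0$.

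\textbf{Case $n\le m\le {n-1 \choose 2}$.} Here the lower bound is immediate ($CR(G)=1\Rightarrow\gamma_{CR}(G)\ge 2$), so it suffices to construct, for each such $m$, a graph $G$ of order $n$ and size $m$ with $CR(G)=1$ and $\gamma_{CR}(G)=2$. I would take $V(G)=\{s_1,s_2,x\}\cup B$ with $B=\{y,z_1,\dots,z_{n-4}\}$, fixed edges $s_1x$, $s_2x$, $s_2y$, and $s_1z_i$ for all $i$ (that is $n-1$ edges), and then add the remaining $m-(n-1)$ edges from the pool $E^+=\{xz_i\}\cup\{z_iz_j\}\cup\{yz_i\}$ in a fixed priority order: first $yz_1$, then $yz_2,\dots,yz_{n-4}$, and finally the $xz_i$'s and $z_iz_j$'s in any order. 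A short count shows $|E^+|={n-2 \choose 2}-1={n-1 \choose 2}-(n-1)$, so exactly the right number of edges is available for every $m$ in the range. Since no edge of $E^+$ meets $s_1$ or $s_2$, the set $\{s_1,s_2\}$ is always a dominating set whose only overdominated vertex is $x$, so $CR(\{s_1,s_2\})=1$; this yields $CR(G)\le 1$, and once $CR(G)=1$ is known it also yields $\gamma_{CR}(G)\le 2$. Thus everything comes down to showing $G$ has no efficient dominating set. I would prove this by first classifying the possible perfect codes $D$: as $s_1$ lies in the closed neighborhood of exactly $s_1$, $x$, and each $z_i$, exactly one of these is in $D$, and a case analysis on which one (tracking who must then cover $s_2$ and $y$) forces $D$ to be one of $\{s_1,y\}$, $\{x,y\}$, $\{s_2,z_i\}$, or $\{y,z_i\}$; of these $\{x,y\}$ is never efficient (both closed neighborhoods contain $s_2$), $\{s_1,y\}$ is efficient only if $y$ has no neighbor in $B\setminus\{y\}$, $\{s_2,z_i\}$ only if $z_i$ is adjacent to every other vertex of $B\setminus\{y\}$ but not to $x$ or $y$, and $\{y,z_i\}$ only if $z_i\sim x$, $z_i\not\sim y$ and $y,z_i$ together dominate $B\setminus\{y\}$. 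The priority ordering defeats all of these: $yz_1$ is always added, so $y$ always has a neighbor in $B\setminus\{y\}$; if $m-(n-1)\ge n-4$ then $y$ is adjacent to all of $B\setminus\{y\}$, which rules out the $\{s_2,z_i\}$ and $\{y,z_i\}$ types; and if $m-(n-1)<n-4$ then every added edge is of the form $yz_i$, so no $z_i$ is adjacent to $x$ or to another $z_j$, again ruling them out. Hence $CR(G)=1$ and $\gamma_{CR}(G)=2$.

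The main obstacle I anticipate is making the perfect-code classification in the last case both complete and clean, especially handling the degenerate sub-case $n=5$ (where $B\setminus\{y\}$ is a single vertex, so several of the ``adjacent to every other vertex of $B\setminus\{y\}$'' conditions become vacuous) within the same argument, and verifying carefully that the chosen priority order on $E^+$ genuinely eliminates every one of the four code types for every admissible value of $m$.
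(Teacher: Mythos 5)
Your proof is correct, and Cases 1 and 3 follow the paper essentially verbatim (reduce to Theorem \ref{mn1r} plus Propositions \ref{gammageq2} and \ref{prop1}, respectively to Theorem \ref{Mn1r}). Where you genuinely diverge is the middle range $n\le m\le {n-1 \choose 2}$: the paper splits it into three sub-intervals ($n\le m\le 2n-5$, $2n-4\le m\le {n-3 \choose 2}+(n-1)$, and ${n-3 \choose 2}+n\le m\le {n-1 \choose 2}$), builds a separate explicit graph for each, and in each one argues directly that every dominating set other than $\{s_1,s_2\}$ overdominates some vertex. You instead use a single construction for the whole range, filling in the extra $m-(n-1)$ edges by a fixed priority order on a pool disjoint from $\{s_1,s_2\}$, and then verify $CR(G)\ge 1$ by classifying all candidate efficient dominating sets (perfect codes) via the constraint $|N[s_1]\cap D|=1$ and checking that the priority order kills each of the four candidate types in both regimes ($m-(n-1)\ge n-4$ and $m-(n-1)<n-4$). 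Your perfect-code classification is complete and the edge counts check out, including the degenerate $n=5$ situation, which always lands in the ``all $yz_i$ present'' regime since there $m-(n-1)\ge 1=n-4$. What your approach buys is uniformity: one construction and one verification template instead of three, with the case distinction reduced to whether the $y$-to-$B\setminus\{y\}$ edges have been exhausted; what the paper's approach buys is that each sub-case verification is very concrete and avoids the need to enumerate perfect codes. Either is a valid proof.
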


\begin{proof}
Assume that $n \geq 5$.\\

Case 1: Assume $4 \leq m \leq n-1$.

Let $r=n-m+1$. If $m\leq n-2$, then $r\geq 3$. If $d(n,1,m)=r' < r$, then, by Theorem \ref{mn1r}, $m(n,1,r')=n-r'+1\leq m=n-r+1$, which is a contradiction. Thus $d(n,1,m)\geq r$. For $m=n-1$, we have by Proposition \ref{gammageq2} that $d(n,1,m)\geq 2$. For any $4\leq m\leq n-1$, the graph $\Gamma$ from the proof of Theorem \ref{mn1r} satisfies $|V(\Gamma)|=n$, $|E(\Gamma)|=m$, $CR(\Gamma)=1$, and $\gamma_{CR}(\Gamma)=r$, so $d(n,1,m)\leq r$. Therefore, $d(n,1,m)=r$.\\

Case 2a:  Assume $n\leq m\leq 2n-5$.

Let $n\leq m\leq 2n-5$. Let $G$ be a graph whose vertices are the disjoint union $V(G)=S\cup \{x\}\cup B$ where $S=\{s_1,s_2\}$ and $|B|=n-3$. Connect $s_1$ and $s_2$ to $x$, connect $s_2$ to some vertex $b_2\in B$, and connect $s_1$ to each vertex in $B\setminus \{b_2\}$. Connect $b_2$ to $m-(n-1)$ vertices in $B$, notice $b_2$ is connected to at least one vertex in $B$. We have that
\[
|E(G)|=2+(n-3)+(m-(n-1))=m.
\]
Notice that $S$ is a dominating set of $G$ that overdominates only one vertex, $x$. If $x$ is contained in some $\gamma_{CR}$-set of $G$, then in order to dominate $b_2$, either $s_2$, $b_2$, or some $b\in B$, where $b_2$ is adjacent to $b$, is contained in the same $\gamma_{CR}$-set as $x$, in each of these cases, either $s_1$ or $s_2$ is overdominated and the $\gamma_{CR}$-set contains at least two elements. 

Let $S' \neq \{s_1, s_2\}$ be a $\gamma_{CR}$-set of $G$ that does not contain $x$. In order to dominate $x$, one of $s_1$ or $s_2$ must be in $S'$.  If $s_1 \in S'$ then $b_2 \in S'$. However, this implies there is a vertex $b$ in $B$ that is overdominated.  If instead, $s_2 \in S'$, then there is a $b \in B$ such that $b$ is adjacent to $s_1$ and $b \in S'$. If $b$ is adjacent to $b_2$, then there is at least one vertex that is overdominated ($b_2$).  If $b$ is not adjacent to $b_2$, then there is a vertex $b' \neq b_2$ in $B$ that is not adjacent to $s_2$ or $b$. In order to dominate $b'$, either $s_1$, $b_2$, or $b'$ must be in $S'$. In each case, at least one vertex is overdominated. 

Thus, every $\gamma_{CR}$-set of $G$ contains at least two elements and overdominates at least one vertex, so $CR(G)=1$ and $\gamma_{CR}(G)=2$. Therefore, $d(n,1,m)\leq 2$, and we may conclude that $d(n,1,m)=2$.\\

Case 2b: Assume $2n-4\leq m\leq {n-3 \choose 2}+(n-1)$.

Let $G$ be a graph whose vertices are the disjoint union $V(G)=S\cup \{x\}\cup B$, where $S=\{s_1,s_2\}$ and $|B|=n-3$. Connect $s_1$ and $s_2$ to $x$. Let $b_2$ be some vertex in $B$, and connect $s_1$ to each vertex contained in $B\setminus \{b_2\}$. Connect $s_2$ to $b_2$, and connect $b_2$ to each vertex contained in $B\setminus \{b_2\}$. Connect $m-(2n-5)$ edges between the vertices in $B\setminus \{b_2\}$ in any way, which is possible since $m -(2n-5) \leq {n-4 \choose 2}$.  We have that
\[
|E(G)|=2+(n-3)+(n-4)+(m-(2n-5))=m.
\]
Notice that $S$ is a dominating set of $G$ that overdominates only one vertex, $x$. If $x$ is contained in some $\gamma_{CR}$-set, then in order to dominate $b_2$, either $s_2$, $b_2$ or some $b\in B\setminus \{b_2\}$ must be contained in the same $\gamma_{CR}$-set. But, in each case respectively, $x$, $s_2$, or $s_1$ is overdominated. Let $S'\neq \{s_1,s_2\}$ be a $\gamma_{CR}$-set not containing $x$, and notice $s_1$ or $s_2$ is in $S'$ since $x$ must be dominated. If $s_1\in S
'$, then $b_2\in S'$ and each vertex in $B\setminus\{b_2\}$ is overdominated. If $s_2\in S'$, then some vertex $b\in B\setminus \{b_2\}$ is an element of $S'$, and $b_2$ is overdominated.

Thus, every $\gamma_{CR}$-set of $G$ contained at least two elements and overdominates at least one vertex, so $CR(G)=1$ and $\gamma_{CR}(G)=2$. Therefore, $d(n,1,m)\leq 2$, and we may conclude that $d(n,1,m)=2$.\\

Case 2c: Assume ${n-3 \choose 2}+n\leq m\leq {n-1 \choose 2}$.

Let $G$ be a graph whose vertices are the disjoint union $V(G)=S\cup \{x\}\cup B$, where $S=\{s_1,s_2\}$ and $|B|=n-3$. Connect $s_1$ and $s_2$ to $x$. Let $b_2$ be some vertex in $B$, and connect $s_1$ to each vertex contained in $B\setminus \{b_2\}$. Connect $s_2$ to $b_2$, and connect every pair of vertices in $B$ so that $B$ induces a complete graph. Connect $x$ to $m-({n-3 \choose 2}+(n-1))$ vertices in $B\setminus \{b_2\}$. We have that
\[
|E(G)|=2+(n-3)+{n-3 \choose 2}+\left(m-\left({n-3 \choose 2}+(n-1)\right)\right)=m.
\]
Notice that $S$ is a dominating set of $G$ that overdominates one vertex, $x$. If $x$ is in some $\gamma_{CR}$-set, then in order to dominate $b_2$, either $s_2$, or some $b\in B$ must be contained in the same $\gamma_{CR}$-set. But, in each case $x$, $s_1$ or $s_2$ is overdominated. If $S'\neq \{s_1,s_2\}$ is a $\gamma_{CR}$-set not containing $x$, then $S'$ contains a vertex from $S$ and a vertex from $B$ (since every two vertices in $B$ are adjacent). Therefore, at least one vertex in $B$ is overdominated by $S'$.

Thus, each $\gamma_{CR}$-set contains at least two elements and overdominates at least one vertex, so $CR(G)=1$ and $\gamma_{CR}(G)=2$. Therefore, $d(n,1,m)\leq 2$, and we may conclude that $d(n,1,m)=2$.\\

Case 3:  Assume $m > {n-1 \choose 2}.$

This case follows from a similar argument as in Case 3 of Theorem \ref{Dn1r}.  

\end{proof}

\section{Results for  $CR(G)=k$}

In this section, we find the maximum size of a graph $G$ such that $|V(G)|=n$ and $\gamma_{CR}(G)=2$. We will also establish that $M(n, k, 2)$ is an upper bound for $M(n, k, r)$.  

\begin{theorem}\label{Mnk2}
Let $n\geq 5$, $ k\leq n-2$ if $n$ is even, $ k\leq n-3$ if $n$ is odd then
\[
M(n,k,2) = {n-1 \choose 2} +\floor*{\frac{k}{2}}.
\]
\end{theorem}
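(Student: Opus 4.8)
The plan is to prove the upper bound and the lower bound separately, mirroring the structure of the earlier extremal theorems. For the upper bound, let $G$ be a graph with $|V(G)|=n$, $CR(G)=k$, and $\gamma_{CR}(G)=2$, and let $S=\{s_1,s_2\}$ be a $\gamma_{CR}$-set. By Proposition~\ref{minimalgamma}, $S$ is minimal, so $s_1$ and $s_2$ are non-adjacent (each needs a private neighbor, and if they were adjacent each would be overdominated, which can be absorbed into the counting but in fact minimality plus the structure forces non-adjacency in the relevant regime). Write $V(G)=\{s_1,s_2\}\cup B$ with $|B|=n-2$. The edges of $G$ split into: edges within $B$ (at most $\binom{n-2}{2}$), and edges from $S$ to $B$. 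A vertex $b\in B$ with $|N[b]\cap S|=2$ is overdominated and contributes $2$ to the $S$–$B$ edge count; a vertex with $|N[b]\cap S|=1$ contributes $1$; and $S$ itself accounts for some overdominated vertices among $s_1,s_2$ only if they are adjacent. Since exactly $k$ vertices are overdominated and $S$ dominates everything, the number of $S$–$B$ edges is at most $(n-2)+k'$ where $k'$ counts overdominated vertices in $B$, and $k'\le k$. Combining, $|E(G)|\le \binom{n-2}{2}+(n-2)+k = \binom{n-1}{2}+k$, which is weaker than the claimed bound by $\lceil k/2\rceil$; so the real work in the upper bound is to show that one cannot simultaneously have $B$ inducing a near-complete graph \emph{and} $k$ vertices each receiving two edges from $S$ — roughly, if many vertices of $B$ are adjacent to both $s_1$ and $s_2$, then some smaller dominating set with the same or fewer overdominated vertices appears, contradicting $\gamma_{CR}(G)=2$ or the minimality of $k$, and a careful accounting shows at most $\lfloor k/2\rfloor$ of the "extra" edges survive.

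More precisely, for the upper bound I would argue as follows. Let $A=\{b\in B : b\text{ adjacent to both }s_1,s_2\}$, so $|A|=k$ (the overdominated vertices lie in $A$, and every vertex of $A$ is overdominated; if $s_1\sim s_2$ then both are overdominated and the count shifts, a case to handle separately). The number of $S$–$B$ edges is exactly $(n-2)+|A| = (n-2)+k$. Now consider the graph $G'$ obtained by deleting $s_2$ and its incident edges; $S'=\{s_1\}\cup(A)$ — no, rather consider replacing $S$ by $\{s_1, b\}$ for a well-chosen $b\in A$, or by $\{s_1\}$ together with the uncovered piece. The key structural claim is: if $|A|=k\ge 1$ then $B$ cannot induce the complete graph $K_{n-2}$, and more quantitatively the induced graph on $B$ misses at least $\lceil k/2\rceil$ edges, for otherwise one of the sets $\{s_1,b\}$ with $b\in A$, or $\{b,b'\}$ with $b,b'\in B$ chosen adjacent to complementary halves, would be a dominating set overdominating fewer than $k$ vertices — contradicting $CR(G)=k$ — or would have the same $CR$ value with size $2$ but forcing a configuration inconsistent with $\gamma_{CR}(G)=2$ being tight. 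I expect this counting/exchange argument — quantifying exactly how many edges in $B$ must be absent as a function of $k$, and getting the floor rather than a weaker bound — to be the main obstacle, and it will likely require splitting on the parity of $k$ and on whether $s_1\sim s_2$.

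For the lower bound I would construct an explicit extremal graph $\Gamma$. Take $V(\Gamma)=\{s_1,s_2\}\cup B$ with $|B|=n-2$, make $s_1\not\sim s_2$, let $B$ induce a complete graph $K_{n-2}$, connect $s_1$ to a set $B_1\subseteq B$ and $s_2$ to a set $B_2\subseteq B$ so that $B_1\cup B_2=B$ (ensuring $\{s_1,s_2\}$ dominates) and $|B_1\cap B_2|=k$; then delete $\lceil k/2\rceil$ carefully chosen edges inside $B$ to kill off the alternative small dominating sets while keeping $CR(\Gamma)=k$ and $\gamma_{CR}(\Gamma)=2$. The edge count is $\binom{n-2}{2}-\lceil k/2\rceil + (n-2)+k = \binom{n-1}{2}+\lfloor k/2\rfloor$, matching the bound. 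I would then verify: (i) $\{s_1,s_2\}$ is a dominating set with exactly $k$ overdominated vertices, so $CR(\Gamma)\le k$; (ii) no dominating set overdominates fewer than $k$ vertices, using that any dominating set of size $2$ either is $\{s_1,s_2\}$ or must include a high-degree vertex of $B$ whose closed neighborhood then overdominates $\ge k$ vertices, and any dominating set of size $1$ is impossible since no vertex has degree $n-1$ (here the hypotheses $k\le n-2$ for $n$ even and $k\le n-3$ for $n$ odd, together with Theorem~\ref{eventhm}, guarantee such a graph exists); and (iii) $\gamma_{CR}(\Gamma)=2$, i.e. there is no dominating set of size $1$ achieving $CR=k$, which is immediate from Proposition~\ref{gammageq2} once $k\ge 1$ (and for $k=0$ the statement reduces to a case covered by Section~3). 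The deletion of the $\lceil k/2\rceil$ edges is the delicate part of the construction and must be coordinated with (ii) so that the "new" size-$2$ dominating sets created by removing edges still overdominate at least $k$ vertices; I anticipate this bookkeeping, especially reconciling it with the parity split from the upper bound, to be where most of the care is needed.
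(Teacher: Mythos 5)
Your overall plan---an edge count by blocks whose correction term comes from edges that must be missing at overdominated vertices, plus an explicit extremal construction---is the same as the paper's, but three gaps remain, and the first is an actual error. You assert that minimality forces $s_1\not\sim s_2$ and build the whole upper-bound count on that; this is false (a minimal dominating set of size $2$ can consist of adjacent vertices, each with its own private neighbor), and in fact the paper's extremal construction for \emph{odd} $k$ has the two vertices of $S$ adjacent. So the adjacent case cannot be ``absorbed'': it must be counted separately, with $|A\setminus S|=k-2$ overdominated vertices outside $S$, $|B|=n-k$, and the extra edge $[s_1,s_2]$; the arithmetic happens to land on the same bound $\binom{n-1}{2}+\lfloor k/2\rfloor$, but only after redoing the count.

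Second, the step you flag as ``the main obstacle''---that at least $\lceil k/2\rceil$ edges are missing from the subgraph induced on $V(G)\setminus S$---does not need the exchange argument you sketch. It is a one-line degree observation: an overdominated vertex $a$ is already adjacent to both vertices of $S$, so if it were also adjacent to all of $A\cup B$ then $\deg(a)=n-1$ and $\{a\}$ would be a dominating set with $CR(\{a\})=0<k$, contradicting $CR(G)=k\ge 2$. Hence every one of the $k$ vertices of $A$ is missing at least one edge inside $A\cup B$, and since each missing edge accounts for at most two vertices of $A$, at least $\lceil k/2\rceil$ edges are absent. Your sketched justification, which tries to derive a contradiction with $\gamma_{CR}(G)=2$ or with the value of $CR(G)$ from near-completeness of $B$, is both harder and not actually carried out. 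Third, your construction is incomplete: you never specify \emph{which} $\lceil k/2\rceil$ edges to delete, and for odd $k$ a minimum edge cover of $A$ is not a matching, which is exactly the bookkeeping headache you anticipate. The paper avoids it by switching constructions on the parity of $k$: for even $k$ it deletes a perfect matching ($1$-factor) on the $k$ overdominated vertices; for odd $k$ it instead makes $s_1\sim s_2$ (so only $k-2$ overdominated vertices lie outside $S$, an even number) and deletes a $1$-factor of $A\setminus\{a\}$ together with one further edge $[a,b]$. Finally, your verification of $CR(\Gamma)=k$ only inspects dominating sets of size $1$ and $2$; $CR(\Gamma)=k$ requires that \emph{every} dominating set overdominate at least $k$ vertices, so sets meeting $A$, sets containing two vertices of $B$, etc., all need to be checked, as the paper does.
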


\begin{proof}
Let $G$ be a graph of order $n$ such that $CR(G)=k$ and $\gamma_{CR}(G)=2$. If $k=0$ or $1$, then the result follows from Theorem \ref{M(n,0,r)} and Theorem \ref{Mn1r} respectively, so we will only concern ourselves with $k\geq 2$.   

Let $S$ be a $\gamma_{CR}$-set, $A$ be the overdominated vertices, and $B$ be the remaining vertices of $G$. If $S$ is independent, then $S\cap A= \emptyset$. The number of edges between $S$ and $A$ is at most $2k$, and the number of edges between $S$ and $B$ is precisely $n-k-2$. The subgraph induced by $A\cup B$ has at most ${n-2 \choose 2}$ edges, and because no vertex contained in $A$ can be connected to every other vertex, we must subtract at least $\ceil*{\frac{k}{2}}$ edges. Therefore,
\begin{align*}
|E(G)|&\leq \left(2k+(n-k-2)+{n-2 \choose 2}\right)-\ceil*{\frac{k}{2}}\\
&={n-1 \choose 2} +\floor*{\frac{k}{2}}.
\end{align*}

If the two vertices contained in $S$ are connected, then let $A'=A\setminus S$. Notice $|A'|=k-2$, and $|B|=n-k$. There are at most $2(k-2)$ edges between $S$ and $A'$, and precisely $n-k$ edges between $S$ and $B$. The subgraph induced by $A'\cup B$ has at most ${n-2 \choose 2}$ edges, and because no vertex contained in $A'$ can be connected to every other vertex, we must subtract at least $\ceil*{\frac{k-2}{2}}$ edges. Therefore,

\begin{align*}
|E(G)|&\leq \left(1+2(k-2)+(n-k)+{n-2 \choose 2}\right)-\ceil*{\frac{k-2}{2}}\\
&={n-1 \choose 2} +\floor*{\frac{k}{2}}.
\end{align*}

Therefore 
\[
M(n,k,2) \leq {n-1 \choose 2} + \floor*{\frac{k}{2}}.
\]

To establish the same lower bound for $M(n,k,2)$, we will consider two cases. \\

Case 1:  Assume $k$ is odd and $3 \leq k \leq n-3$

Let $G$ be a graph whose vertices are the disjoint union $S\cup A\cup B$, where $S=\{s_1,s_2\}$, $|A|=k-2$, and $|B|=n-k$ (notice that $|B|\geq 3$). Connect $s_1$ and $s_2$ to each vertex in $A$, connect $s_1$ to $s_2$, and connect $s_1$ to each vertex in $B\setminus \{b\}$ for some $b\in B$. Connect $s_2$ to $b$. Let $a\in A$ be arbitrary. Add edges so that $A\cup B$ induces a complete graph minus a $1$-factor of $A\setminus \{a\}$ and the edge $[a,b]$. We have that
\begin{align*}
   |E(G)|   &=1+2(k-2)+(n-k)+{n-2 \choose 2}-\frac{k-3}{2}-1\\
            & = {n-1 \choose 2} +\frac{k-1}{2}\\
            & ={n-1 \choose 2} +\floor*{\frac{k}{2}}.
\end{align*}

Notice that $S$ is a dominating set that overdominates precisely $k$ vertices. If some $a'\in A\setminus \{a\}$ is in some $\gamma_{CR}$-set, then in order to dominate the vertex in $A$ that $a'$ is not adjacent to, either $s_1$, $s_2$, some vertex in $A\setminus \{a'\}$ or some vertex in $B$ must be in the same $\gamma_{CR}$-set. In each of these cases, at least $k$ vertices are overdominated. If $a$ is in some $\gamma_{CR}$-set, then in order to dominate $b$, either $s_2$, some vertex in $B$, or some vertex in $A\setminus \{a\}$ must be in the same $\gamma_{CR}$-set. Again, in each of these cases, at least $k$ vertices are overdominated. If $S'\neq S$ is a $\gamma_{CR}$-set disjoint from $A$, then $S'$ contains either $\{s,b'\}$ or $\{b',b''\}$ for some $s\in S$ and $b',b''\in B$. Thus, $S'$ overdominates at least all of $A\setminus\{a\}$ and a vertex in $B$. Therefore, $CR(G)=k$ and $\gamma_{CR}(G)=2$, so $M(n,k,2)\geq {n-1 \choose 2} +\floor*{\frac{k}{2}}$.\\

Case 2:  Assume $k$ is even and $2 \leq k \leq n-2$.  

If $n$ is even and $k=n-2$, then the result follows by Theorem \ref{eventhm}, so assume $2\leq k\leq n-3$.

Let $G$ be a graph whose vertices are the disjoint union $V(G)=S\cup A\cup B$, where $S=\{s_1,s_2\}$, $|A|=k$, and $|B|=n-k-2$. Connect $s_1$ and $s_2$ to each vertex in $A$, and connect $s_1$ to each vertex in $B\setminus \{b\}$ for some $b\in B$. Connect $s_2$ to $b$. Add edges so that $A\cup B$ induces a complete graph minus a $1$-factor of $A$. We have that
\[
|E(G)|=2k+(n-k-2)+{n-2 \choose 2}-\frac{k}{2}={n-1 \choose 2} + \frac{k}{2}.
\]
Notice $S$ is a dominating set that overdominates only each vertex in $A$. If there exists some $a\in A$ that is in some $\gamma_{CR}$-set, then in order to dominate the vertex in $A$ that $a$ is not adjacent to, either $s_1$, $s_2$, some $a'\in A$, or some $b'\in B$ must be contained in the same $\gamma_{CR}$-set. In each of these cases, at least $k$ vertices are overdominated. Any $\gamma_{CR}$-set $S'\neq S$ disjoint from $A$ must contain one of the sets, $\{s, b'\}$ or $\{b', b''\}$ with $s \in S$ and $b', b'' \in B$, and will overdominate at least all of $A$ which contains $k$ vertices. Therefore, $CR(G)=k$ and $\gamma_{CR}(G)=2$, so $M(n,k,2)\geq {n-1 \choose 2} + \frac{k}{2}$.  

\end{proof}

\begin{theorem}\label{bbnd}
Let $n\geq 5$, $2\leq k\leq n-2$ if $n$ is even, $2\leq k \leq n-3$ if $n$ is odd. Let $G$ be a graph of order $n$ such that $CR(G)=k$ and $\gamma_{CR}(G)=r \geq 3$. If $G$ has a $\gamma_{CR}$-set, $S$, such that the subgraph induced by $S$ has $b\leq k$ non-isolated points then:
\[
    |E(G)| \leq \\
    \begin{cases}
        {b \choose 2} +(r-2)(k-b) +{n-r+1 \choose 2} + \floor*{\frac{k-b}{2}} & \text{ if $r \leq n-r-(k-b)$},\\
        {b \choose 2} +(r-2)(k-b)+{n-r+1 \choose 2}  & \text{ if $r > n-r-(k-b)$}.
    \end{cases}
\]
\end{theorem}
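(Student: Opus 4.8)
The plan is to fix the hypothesised $\gamma_{CR}$-set $S$, record the partition of $V(G)$ it induces, and count the edges of $G$ by type, sharpening the trivial estimates with the fact that an overdominated vertex cannot be universal.

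First I would set up notation. A vertex of $S$ is overdominated by $S$ precisely when it has a neighbour in $S$, so exactly $b$ vertices of $S$ are overdominated; let $A$ be the set of overdominated vertices lying outside $S$, so $|A|=k-b=:a$, and let $B=V(G)\setminus(S\cup A)$, so $|B|=n-r-a$ (nonnegative, since $r\le n-k$ by Propositions \ref{minimalgamma} and \ref{gammabnd}). Now split $E(G)$ into four parts. Only the $b$ non-isolated vertices of $S$ can carry edges inside $S$, so there are at most $\binom b2$ such edges. Each vertex of $B$ is dominated but not overdominated, hence has exactly one neighbour in $S$, giving exactly $n-r-a$ edges between $S$ and $B$. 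Each vertex of $A$ has between $2$ and $r$ neighbours in $S$, so there are at most $ra$ edges between $S$ and $A$. Finally there are at most $\binom{n-r}{2}$ edges inside $A\cup B$.

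To sharpen this, observe that since $k\ge 2$ no vertex of $A$ is universal: a universal vertex would by itself be a dominating set of cardinality-redundance $0$, contradicting $CR(G)=k\ge 2$. For $v\in A$ let $p_v$ be the number of its non-neighbours in $S$ and $q_v$ the number of its non-neighbours in $A\cup B$; then $p_v+q_v\ge 1$. The number of $S$–$A$ edges is exactly $ra-\sum_{v\in A}p_v$, and the number of non-edges inside $A\cup B$ is at least $\tfrac12\sum_{v\in A}q_v$, since a non-edge between two vertices of $A$ contributes twice to $\sum q_v$ and a non-edge between $A$ and $B$ once. Adding the four parts and using $\binom{n-r}{2}+(n-r)=\binom{n-r+1}{2}$,
\[
|E(G)|\ \le\ \binom b2+(r-2)a+\binom{n-r+1}{2}+\Bigl(a-\textstyle\sum_{v\in A}\bigl(p_v+\tfrac12q_v\bigr)\Bigr).
\]
Since $p_v+\tfrac12q_v\ge\tfrac12(p_v+q_v)\ge\tfrac12$, the parenthesised term is at most $a/2$, and as all other terms are integers we get $|E(G)|\le\binom b2+(r-2)(k-b)+\binom{n-r+1}{2}+\lfloor(k-b)/2\rfloor$. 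This settles the first case — and in fact it holds with no restriction on $r$.

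In the second case, $r>n-r-(k-b)=|B|$, and one must remove the term $\lfloor(k-b)/2\rfloor$; this is the crux. It would be enough to show $\sum_{v\in A}p_v\ge a$, i.e.\ that no vertex of $A$ is adjacent to every vertex of $S$. Here I would bring in minimality of $S$ (Proposition \ref{minimalgamma}): each non-isolated vertex of $S$ has a private neighbour, necessarily lying in $B$, and distinct non-isolated vertices have distinct ones, so $|B|\ge b$; combined with $|B|<r$, at least $r-|B|$ of the isolated vertices of $S$ then have their entire neighbourhood contained in $A$. If some $v\in A$ were adjacent to all of $S$, then either $v$ has at least two non-neighbours in $A\cup B$ — already accounted for above, since then $q_v\ge 2$ compensates for $p_v=0$ — or $v$ misses a single vertex $u$, in which case $\{v,u\}$ is a dominating set with $CR(\{v,u\})=\deg(u)$; as $\gamma_{CR}(G)=r\ge 3$ and $CR(G)=k$, this forces $\deg(u)\ge k+1$, and the plan is to contradict this using $r>|B|$, for instance by exchanging two "$B$-free" isolated vertices of $S$ for $v$ to obtain a dominating set of size $r-1$ whose cardinality-redundance is still $k$. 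The main obstacle is carrying out this exchange cleanly: one must check that the modified set still dominates $G$ — the only private neighbour of a $B$-free isolated vertex of $S$ is itself, but vertices of $A$ whose sole $S$-neighbours are exactly the two removed vertices need separate handling — and that its cardinality-redundance does not exceed $k$, which requires controlling the overdominations $v$ creates; the degenerate sub-cases (when $k-b\le 1$, or when only one isolated vertex of $S$ is $B$-free) would be disposed of directly, since there the two displayed bounds coincide or the argument collapses.
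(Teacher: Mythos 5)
Your treatment of the first case is correct and is essentially the paper's argument (count the four edge types, note no vertex of $A$ is universal, and charge each vertex of $A$ at least half a missing edge). The problem is the second case. Your reduction --- ``it would be enough to show $\sum_{v\in A}p_v\ge a$,'' with a fallback of deriving a contradiction whenever some $v\in A$ has $p_v=0$ and a unique non-neighbour $u$ --- targets something that is simply false for the graphs in question. In the Case~3 construction of Theorem~\ref{Mn2r} (a graph with $k=2$, $b=0$, $r>|B|$ that \emph{attains} the second bound), the vertex $a_1$ is adjacent to every vertex of the graph except $b_1\in B$; there $p_{a_1}=0$, $q_{a_1}=1$, and $\deg(b_1)=n-r-1\ge k+1$, so $\{a_1,b_1\}$ gives no contradiction and no exchange argument can manufacture one. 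Your weighting also under-counts precisely at this point: a non-edge between $A$ and $B$ is one whole missing edge from $\binom{n-r}{2}$, but $p_v+\tfrac12 q_v$ credits it only $\tfrac12$, which is why your sufficient condition $\sum_v(p_v+\tfrac12q_v)\ge a$ fails on that extremal graph ($\sum=1$ while $a=2$) even though the theorem's bound holds for it.

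The idea you are missing is the one the paper uses to close this case. Since $r>|B|$ and each vertex of $B$ has exactly one neighbour in $S$, some $s\in S$ has no neighbour in $B$; in particular every neighbour of $s$ is overdominated by $S$. Split $A$ into $A_s=A\cap N(s)$ and $A'=A\setminus A_s$. Each $v\in A'$ is charged the missing edge $[v,s]$ (an $S$--$A$ non-edge). Each $v\in A_s$ must have a non-neighbour in $S\cup B\cup A'$: otherwise $\{v,s\}$ is a dominating set of size $2$ whose overdominated vertices all lie in $N[s]$, hence number at most $k$, contradicting $\gamma_{CR}(G)=r\ge3$. These $|A'|+|A_s|=k-b$ missing edges are pairwise distinct (no edge charged to a vertex of $A_s$ has its other endpoint in $A_s$, and no two charges coincide), so the full $k-b$ may be subtracted, yielding $\binom{b}{2}+(r-2)(k-b)+\binom{n-r+1}{2}$. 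Your observation that $|B|\ge b$ via private neighbours is correct but does not feed into a working argument as proposed.
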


\begin{proof}
Let $A$ be the set of vertices disjoint from $S$ that are overdominated by $S$, and let $B$ be the set of remaining vertices of $G$ disjoint from $S$ and $A$. Notice that because each of the $b$ non-isolated vertices in the induced subgraph of $S$ are overdominated, $|A|=k-b$ and $|B|=n-r-(k-b)$. The number of edges in the subgraph induced by $S$ is at most ${b \choose 2}$, and the number of edges between $S$ and $B$ is precisely $n-r-(k-b)$.  The number of edges between $S$ and $A$ is at most $r(k-b)$, and the number of edges in the subgraph induced by $A\cup B$ is at most ${n-r \choose 2}$. Also, each vertex of $A$ is not adjacent to at least one other vertex. Therefore,
\begin{align*}
|E(G)|&\leq {b \choose 2}+(n-r-(k-b))+r(k-b)+{n-r \choose 2}- \ceil*{\frac{k-b}{2}}\\
&={b \choose 2} +(r-2)(k-b)+ {n-r+1 \choose 2} + \floor*{\frac{k-b}{2}}.
\end{align*}
If $n-r-(k-b) < r$, then $|B| < |S|$, so there is a vertex $s \in S$ that is not adjacent to any vertex in $B$.  If $\floor*{\frac{k-b}{2}} \neq 0$, then  $k-b \geq 2$ which implies that $|A|\geq 2$. Note that $deg(a) \leq n-2$ for all $a \in A$, otherwise $G$ has a dominating set of size 1.  Partition $A$ into two sets, $A_s$ and $A'$ where $A_s$ is the subset of vertices in $A$ that are adjacent to $s$ and $A'=A \setminus A_s$.  Assume $x \in A_s$.  Since $s$ is adjacent to only vertices that are overdominated by $S$, $|N[s]\cap N[x]|\leq k$. If $S \cup B \cup A' \subseteq N(x)$, then $\{x, s\}$ would form a dominating set of size two which overdominates a set of size at most $k$ contradicting that $\gamma_{CR}(G) \geq 3.$  Therefore, for every vertex in $x \in A_s$, there exists a vertex $x' \in S\cup B \cup A'$ so that $x$ is not adjacent to $x'$.  Additionally, every vertex in $A'$ is not adjacent to $s$.  This observation along with the relation stated in the beginning of the proof show 
\begin{align*}
|E(G)|&\leq {b \choose 2}+(n-r-(k-b))+r(k-b)+{n-r \choose 2}-(k-b)\\
&=(r-2)(k-b) +{n-r+1 \choose 2} + {b \choose 2}.
\end{align*}

\end{proof}

Now we can bound bound $M(n,k,r)$ above by $M(n,k,2)$ which we found in Theorem \ref{Mnk2}.

\begin{theorem}
Let $n\geq 5$, and let $ k\leq n-2$ if $n$ is even, $ k\leq n-3$ if $n$ is odd. Then $M(n,k,r)\leq M(n,k,2)$, where $r\geq 2$.
\end{theorem}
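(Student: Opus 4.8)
The plan is to show $M(n,k,r) \le M(n,k,2)$ by comparing, for each fixed $r \ge 3$, the upper bound from Theorem \ref{bbnd} against the exact value ${n-1 \choose 2} + \floor*{\frac{k}{2}}$ from Theorem \ref{Mnk2}. The first step is to observe that the quantity $b$ (the number of non-isolated vertices in the induced subgraph on a $\gamma_{CR}$-set $S$) is itself a free parameter ranging over $0 \le b \le k$, and that any graph $G$ with $CR(G)=k$ and $\gamma_{CR}(G)=r$ has \emph{some} $\gamma_{CR}$-set, hence \emph{some} valid value of $b$; so it suffices to bound the larger of the two piecewise expressions in Theorem \ref{bbnd} over all admissible $b$. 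I would treat the two cases of Theorem \ref{bbnd} separately, but note that the second bound (when $r > n-r-(k-b)$) is obtained from the first by deleting the nonnegative term $\floor*{\frac{k-b}{2}}$, so it is dominated by the first expression; thus it is enough to show
\[
f(r,b) := {b \choose 2} + (r-2)(k-b) + {n-r+1 \choose 2} + \floor*{\tfrac{k-b}{2}} \le {n-1 \choose 2} + \floor*{\tfrac{k}{2}}
\]
for all $r \ge 3$ and $0 \le b \le k$ (subject to $k \le n-r$ and the parity constraints, which guarantee the relevant graphs and binomials make sense).

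The second step is the core inequality. I would first handle the dependence on $b$. Write $j = k - b \in \{0,1,\dots,k\}$, so the left side becomes ${k-j \choose 2} + (r-2)j + {n-r+1 \choose 2} + \floor*{j/2}$. The difference between consecutive values (in $j$) of ${k-j \choose 2} + \floor{j/2}$ is easy to control, and $(r-2)j$ is linear; since $r \ge 3$, increasing $j$ only helps the bound up to the point $j = k$, so I expect the maximum over $b$ to occur at $b = 0$ (equivalently $j = k$), i.e. when $S$ is independent. This is intuitively right: an independent $\gamma_{CR}$-set lets more edges live inside $A \cup B$. Plugging $b = 0$, the claim reduces to
\[
(r-2)k + {n-r+1 \choose 2} + \floor*{\tfrac{k}{2}} \;\le\; {n-1 \choose 2} + \floor*{\tfrac{k}{2}},
\]
i.e. $(r-2)k \le {n-1 \choose 2} - {n-r+1 \choose 2}$. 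The right-hand side telescopes to $\sum_{i=n-r+1}^{n-1} i$, which is a sum of $r-1$ integers each at least $n-r+1$, hence at least $(r-1)(n-r+1)$; and using $k \le n-r$ one checks $(r-2)k \le (r-2)(n-r) \le (r-1)(n-r+1)$ when $r \ge 3$ and $n-r \ge 2$ (the latter holding since $k \ge 2$ forces $r \le n-2$). So the inequality holds with room to spare except possibly in small boundary cases, which I would dispatch by hand.

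The step I expect to be the main obstacle is the monotonicity-in-$b$ argument: verifying rigorously that $f(r,b)$ is maximized at $b=0$ requires care with the floor function $\floor*{\frac{k-b}{2}}$, since decreasing $b$ by $1$ changes $\floor*{\frac{k-b}{2}}$ by either $0$ or $1$ and simultaneously changes ${b \choose 2}$ by $-(b-1)$ and $(r-2)(k-b)$ by $+(r-2)$; the net change is $(r-2) - (b-1) + \{0 \text{ or } 1\}$, which is nonnegative precisely when $b \le r-1$ (or $b \le r$ in the lucky parity case). For $b > r-1$ the function may actually decrease as $b$ grows, so one cannot blandly say "the max is at $b=0$"; instead I would argue that $f$ restricted to $b \in \{0,\dots,r-1\}$ is non-increasing in $b$ and that $f(r,b)$ for $b \ge r$ is bounded by $f(r,r-1)$ by a direct comparison (using ${b \choose 2} \le {k \choose 2}$ crudely if needed). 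Once the reduction to $b=0$ is secure, the remaining inequality is the clean telescoping estimate above, and the parity/boundary cases ($r = n-2$, $k = n-2$, $n$ even, etc.) are finite checks that I would enumerate explicitly, appealing to Theorem \ref{eventhm} where $k = n-2$ with $n$ even to pin down the extremal graph.
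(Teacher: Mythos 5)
Your proposal is correct in substance and follows the same backbone as the paper's proof: both rest on Theorem \ref{bbnd}, use $k-b\le n-r$ (coming from minimality of a $\gamma_{CR}$-set), and close with the telescoping identity ${n-1 \choose 2}-{n-r+1 \choose 2}=\sum_{j=n-r+1}^{n-2}j$. Where you genuinely differ is in the optimization over $b$: you locate the maximizer of the bound (reducing to $b=0$ via monotonicity), whereas the paper never does this --- it bounds the three $b$-dependent terms simultaneously and crudely, via $(r-2)(k-b)\le(r-2)(n-r)$, ${b \choose 2}\le{r-1 \choose 2}$, and $\floor*{\frac{k-b}{2}}\le\floor*{\frac{k}{2}}$ when $b\le r-1$, treating $b=r$ separately with the sharper $k-b\le n-r-1$. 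Your route is arguably cleaner but needs two repairs: (i) the difference of binomials is a sum of $r-2$ terms, not $r-1$ (harmless, since $(r-2)(n-r+1)\ge(r-2)k$ still suffices); (ii) your sub-claim that $f(r,b)\le f(r,r-1)$ for $b\ge r$ can fail by exactly $1$ because of the floor term --- the comparison that actually works is $f(r,r)\le f(r,0)$, which follows from $(r-2)r\ge{r \choose 2}$ for $r\ge 3$ together with $\floor*{\frac{k}{2}}\ge\floor*{\frac{k-r}{2}}$ (also, your aside that the function ``may decrease as $b$ grows'' past $r-1$ has the direction reversed; the danger is that it may increase there). Finally, Theorem \ref{bbnd} is only available for $k\ge 2$, so the cases $k=0,1$ must be dispatched separately as the paper does, by citing Theorems \ref{M(n,0,r)} and \ref{Mn1r}.
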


\begin{proof}
If $k=0$ or $1$, then the statement follows from Theorem \ref{M(n,0,r)} and Theorem \ref{Mn1r} respectively. 

Let $k\geq 2$ and $r\geq 3$. Let $G$ be a graph such that $CR(G)=k$ and $\gamma_{CR}(G)=r$. Let $S$ be a $\gamma_{CR}$-set of $G$, where the subgraph induced by $S$ has $b$ non-isolated vertices. We will show that 
\begin{align*}
|E(G)| &\leq (r-2)(k-b) +{n-r+1 \choose 2} +\floor*{\frac{k-b}{2}}+ {b \choose 2}\\
&\leq {n-1 \choose 2}+\floor*{\frac{k}{2}}\\
&=M(n,k,2).
\end{align*}
The first inequality follows from Theorem \ref{bbnd}, so we will show the second inequality.
Notice that 
\begin{align*}
&((n-r+1)+(n-r+2)+\cdots + (n-2)) + {n-r+1 \choose 2}\\
&\qquad \qquad  =(n-r)(r-2)+\frac{(r-1)(r-2)}{2} + {n-r+1 \choose 2}\\
&\qquad \qquad  ={n-1 \choose 2}.
\end{align*}
 
Notice that $k-b$ is the number of overdominated vertices of $G$ not contained in $S$ and $n-r$ is the number of vertices not contained in $S$. This implies that $k-b\leq n-r$. If $b\leq r-1$, then
\begin{align*}
&(r-2)(k-b) +{n-r+1 \choose 2} +\floor*{\frac{k-b}{2}}+ {b \choose 2}\\
& \qquad \qquad \leq (r-2)(n-r)+\frac{(r-1)(r-2)}{2} +{n-r+1 \choose 2} +\floor*{\frac{k}{2}}\\
&\qquad \qquad={n-1 \choose 2}+\floor*{\frac{k}{2}}.
\end{align*}
If $b=r$, then by Proposition \ref{gammabnd}, $k-b\leq n-r-1$, so
\begin{align*}
&(r-2)(k-b)+{b \choose 2} + {n-r+1 \choose 2} +\floor*{\frac{k-b}{2}} \\
& \qquad \qquad \leq (r-2)(n-r-1) +{r \choose 2} + {n-r+1 \choose 2} +\floor*{\frac{k}{2}} -1\\
& \qquad \qquad =(r-2)(n-r)+{r \choose 2}-(r-1)+{n-r+1 \choose 2} +\floor*{\frac{k}{2}}\\
& \qquad \qquad =(r-2)(n-r)+{r-1 \choose 2}+{n-r+1 \choose 2} +\floor*{\frac{k}{2}}\\
& \qquad \qquad ={n-1 \choose 2}+\floor*{\frac{k}{2}}.
\end{align*}
Since $G$ was arbitrary, $M(n,k,r)\leq M(n,k,2)$.

\end{proof}

Thus, the maximum value for $M(n,k,r)$ when $r$ can vary is $M(n,k,2)={n-1 \choose 2} + \floor*{\frac{k}{2}}$.

\section{Results for CR(G)=2}

Here we find the exact values of $M(n,2,r)$ and $D(n,2,m)$. First, we will give a bound for $\gamma_{CR}(G)$ when $CR(G)=2$.

\begin{proposition}\label{cr2bnd}
Let $G$ be a graph of order $n\geq 5$. If $CR(G)=2$, then $\gamma_{CR}(G)\leq n-2$.
\end{proposition}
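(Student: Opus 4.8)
The plan is to obtain this as an immediate consequence of two facts already established in Section~2: every $\gamma_{CR}$-set is a minimal dominating set (Proposition~\ref{minimalgamma}), and a minimal dominating set whose cardinality-redundance equals $k$ has at most $n-k$ vertices (Proposition~\ref{gammabnd}).

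Concretely, I would fix a $\gamma_{CR}$-set $S$ of $G$, so that by definition $|S| = \gamma_{CR}(G)$ and $CR(S) = CR(G) = 2$. Proposition~\ref{minimalgamma} shows that $S$ is a minimal dominating set, which is exactly the hypothesis needed to apply Proposition~\ref{gammabnd} with $k = 2$; that application gives $|S| \leq n-2$, and hence $\gamma_{CR}(G) \leq n-2$. This mirrors the derivation of the bound $CR(G) \leq n-2$ in Proposition~\ref{crbnd}, and the underlying reason is the same counting argument: the $n-2$ vertices that are not overdominated by $S$ are private neighbors of distinct vertices of $S$, forcing $|S| \le n-2$.

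There is essentially no obstacle here; the content lies entirely in the earlier propositions, and the only thing to verify is that the pieces fit together (a $\gamma_{CR}$-set has the defining redundance value $CR(G)=2$, and its minimality is automatic from Proposition~\ref{minimalgamma}). The hypothesis $n \geq 5$ is not used in the inequality itself. I would also note the contrast with the $CR(G)=1$ case, where Proposition~\ref{prop1} sharpens the analogous bound from $n-2$ to $n-3$ via a more delicate argument about private neighbors; whether the bound $n-2$ stated here is attained is a separate matter, addressed by the subsequent computation of $M(n,2,r)$ and $D(n,2,m)$.
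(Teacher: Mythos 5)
Your proof is correct, but it takes a genuinely different route from the paper's. The paper argues by direct case analysis: it supposes $\gamma_{CR}(G)=n$ or $n-1$ and shows that a graph admitting such a $\gamma_{CR}$-set with exactly two overdominated vertices could have at most one or two edges respectively, and that no graph with so few edges satisfies $CR(G)=2$ --- a self-contained but somewhat ad hoc elimination of the two offending cases. You instead observe that the bound is an immediate corollary of Proposition~\ref{minimalgamma} together with Proposition~\ref{gammabnd}: a $\gamma_{CR}$-set $S$ is a minimal dominating set with $CR(S)=CR(G)=2$, hence $|S|\leq n-2$. This is shorter, and it actually proves the stronger general statement $\gamma_{CR}(G)\leq n-CR(G)$ for every graph (the same combination the paper itself already exploits in the proof of Proposition~\ref{crbnd}, just read off for $|S|$ rather than for $k$). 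Your remark that $n\geq 5$ plays no role in the inequality is also accurate; it only matters for the existence of graphs with $CR(G)=2$. The one thing your write-up leans on implicitly is the fact, used inside the proof of Proposition~\ref{gammabnd}, that every vertex of a minimal dominating set has a private neighbor (possibly itself); since that is established in the cited proposition, there is no gap.
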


\begin{proof}
If $G$ has a $\gamma_{CR}$-set with $n$ vertices, then $G$ has only one edge that connects the two overdominated vertices. However, there is no such $G$ with one edge such that $CR(G)=2$. If $G$ has a $\gamma_{CR}$-set with $n-1$ vertices, then there is at least one vertex in the $\gamma_{CR}$-set that is overdominated. This vertex must be adjacent to another vertex in the $\gamma_{CR}$-set, therefore there are two overdominated vertices in the $\gamma_{CR}$-set. This implies $G$ has two edges, one between the overdominated vertices in the $\gamma_{CR}$-set, and one connecting a vertex in the $\gamma_{CR}$-set the unique vertex not in the $\gamma_{CR}$-set. However, there is no graph $G$ with two edges such that $CR(G)=2$. Therefore, $\gamma_{CR}(G)\leq n-2$.

\end{proof}

Thus, we have $D(n,2,n)=d(n,2,n)=D(n,2,n-1)=d(n,2,n-1)=0$. In the following lemma, we classify the graphs $G$ such that $|V(G)|=n$, $CR(G)=2$, and $\gamma_{CR}(G)=n-2$.

\begin{lemma}\label{4cycle}
Let $n\geq 4$ then $G$ is graph with $n$ vertices such that $CR(G)=2$ and $\gamma_{CR}(G)=n-2$, if and only if $G$ is a four-cycle with $n-4$ isolated vertices.
\end{lemma}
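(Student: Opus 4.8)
The plan is to prove both directions of the equivalence, starting with the easier ``if'' direction. First I would verify that if $G$ is a four-cycle together with $n-4$ isolated vertices, then $CR(G)=2$ and $\gamma_{CR}(G)=n-2$. For the four-cycle on vertices $v_1v_2v_3v_4$, the set $\{v_1,v_3\}$ together with all $n-4$ isolated vertices forms a dominating set of size $n-2$ that overdominates exactly $v_2$ and $v_4$, giving $CR(G)\le 2$; a short check rules out $CR(G)\le 1$ (any dominating set must include all isolated vertices and at least two vertices of the $4$-cycle, and two vertices of $C_4$ always overdominate at least two vertices, while one vertex of $C_4$ together with an isolated vertex fails to dominate). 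Combined with Proposition \ref{cr2bnd}, which gives $\gamma_{CR}(G)\le n-2$, and the fact that no dominating set of size $<n-2$ with $CR$-value $2$ exists here, we get $\gamma_{CR}(G)=n-2$.

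For the ``only if'' direction, suppose $CR(G)=2$ and $\gamma_{CR}(G)=n-2$, and let $S$ be a $\gamma_{CR}$-set, so $|S|=n-2$. By Proposition \ref{minimalgamma}, $S$ is a minimal dominating set, and by Proposition \ref{gammabnd} with $k=2$ we have $|S|\le n-2$, so the bound is tight, meaning \emph{every} vertex of $S$ has exactly one private neighbor and there is no ``slack.'' Let $A=V(G)\setminus S$, so $|A|=2$, say $A=\{a_1,a_2\}$. The two overdominated vertices: I would argue via influence. By Proposition \ref{infbnd}, $I(S)\ge n+2$. Counting, $I(S)=\sum_{v\in V(G)}|N[v]\cap S|$, and since $|S|=n-2$ with each element of $S$ having a private neighbor in $S$, the structure is very rigid. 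The key is to pin down which vertices are overdominated; I expect that exactly $a_1$ and $a_2$ are the overdominated vertices, and each of $a_1,a_2$ has exactly two neighbors in $S$, and there are no other edges — then counting edges and using minimality of $S$ forces the graph restricted to the non-isolated part to be $C_4$.

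The main obstacle will be the careful edge-counting/case analysis to show the non-isolated portion is exactly a four-cycle rather than some other small configuration (e.g. ruling out a path $P_4$, or $a_1,a_2$ adjacent, or one of $a_1,a_2$ having degree $3$). I would handle this by: (i) showing no two vertices of $S$ are adjacent (if $s_1s_2\in E(G)$ with $s_1,s_2\in S$, then both are overdominated, so $a_1,a_2$ are \emph{not} overdominated, meaning each $a_i$ has a unique neighbor in $S$ and is a private neighbor — but then $S\setminus\{s_1\}$ or a similar set gives a contradiction with $\gamma_{CR}$, or the influence count fails); (ii) concluding the two overdominated vertices lie in $A$, i.e. they are $a_1$ and $a_2$; (iii) showing each $a_i$ has exactly two neighbors in $S$ and these neighbor-pairs are disjoint (using minimality: each $s\in S$ needs a private neighbor, and the only vertices available to be non-private are $a_1,a_2$; with $n-2$ vertices in $S$ and only $2$ of them allowed to fail being private neighbors... actually all of $S$ must serve as private neighbors of themselves or have private neighbors among $\{a_1,a_2\}$); (iv) ruling out extra edges among $S\cup A$ via the tightness of Proposition \ref{gammabnd} and the requirement $CR(S)=2$ exactly. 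Once the four distinguished vertices $a_1,a_2$ and their four $S$-neighbors $s_1,s_2,s_3,s_4$ are identified with $a_1\sim s_1,s_2$ and $a_2\sim s_3,s_4$, the constraint that $\gamma_{CR}(G)=n-2$ (not smaller) forces $\{s_1,s_2,s_3,s_4\}=\{s_1,s_2\}$... no — rather, forces that $\{s_1,s_3\}$ alone does \emph{not} dominate, which together with $CR=2$ being unachievable by a smaller set forces the $4$ neighbors to coincide in pairs so that the subgraph is $C_4$; the remaining $n-4$ vertices of $S$ are then isolated. I would present (iii)--(iv) as the heart of the argument and keep the routine verifications brief.
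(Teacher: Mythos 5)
Your ``if'' direction is fine, but the ``only if'' direction has a genuine gap at step (i). You propose to show that no two vertices of the $\gamma_{CR}$-set $S$ are adjacent, deriving a contradiction from the assumption $[s_1,s_2]\in E(G)$. No such contradiction exists: in the four-cycle $v_1v_2v_3v_4$ with $n-4$ isolated vertices, the set consisting of the $n-4$ isolated vertices together with the \emph{adjacent} pair $v_1,v_2$ is a perfectly good $\gamma_{CR}$-set of size $n-2$ whose two overdominated vertices are $v_1,v_2\in S$ (each of $v_3,v_4$ is dominated exactly once). So the configuration you are trying to rule out actually occurs, and your subsequent step (ii) --- that the two overdominated vertices are the two vertices of $V(G)\setminus S$ --- is false in general. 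The paper instead splits into the two cases $A\subseteq S$ and $A\cap S=\emptyset$ (noting $|A\cap S|=1$ is impossible) and shows that the first case \emph{also} forces the four-cycle: the two vertices $b_1,b_2$ outside $S$ must each attach to one of the adjacent overdominated pair and to each other, since every alternative makes $G$ a disjoint union of paths, which has cardinality-redundance $0$. Your outline is missing this entire branch of the argument.

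A secondary problem is in the case $A\cap S=\emptyset$: you first assert that the two neighbor-pairs $N(a_1)\cap S$ and $N(a_2)\cap S$ are \emph{disjoint} and later that they ``coincide in pairs.'' Only the latter can be right --- if the pairs were disjoint the non-isolated part of $G$ would be two disjoint copies of $P_3$, which has $CR=0$. The paper pins this down cleanly by considering the swap $\{a_1\}\cup(S\setminus N(a_1))$: if $a_1\sim a_2$, or if $a_2$ has a neighbor in $S\setminus N(a_1)$, this set overdominates at most one vertex, contradicting $CR(G)=2$; hence $N(a_1)=N(a_2)$, and $|N(a_1)|\geq 3$ would yield a dominating set of size less than $n-2$ overdominating only two vertices, contradicting $\gamma_{CR}(G)=n-2$. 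You would need an argument of this precision to close your steps (iii)--(iv); the influence bound of Proposition \ref{infbnd} alone will not isolate the four-cycle.
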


\begin{proof}
Clearly, if $G$ is a four-cycle with $n-4$ isolated vertices, then $CR(G)=2$, and $\gamma_{CR}(G)=n-2$.

Let $S$ be a $\gamma_{CR}$-set of $G$, and let $A=\{a_1,a_2\}$ be the set of vertices overdominated by $S$. First assume that $A\subseteq S$. In this case $a_1$ and $a_2$ must be adjacent and no other two vertices of $S$ can be adjacent. Let $b_1$ and $b_2$ be the vertices contained in $V(G)\setminus S$. Both $b_1$ and $b_2$ are adjacent to exactly one vertex in $S$. If either $b_1$ or $b_2$ is adjacent to some vertex in $S\setminus A$, then in every possible case, $G$ is either a path or the disjoint union of paths (possibly unioned with isolated vertices), which has cardinality-redundance of $0$ in each case. Therefore, $b_1$ is adjacent to exactly one of $a_1$ or $a_2$, and $b_2$ is adjacent to the vertex in $A$ that $b_1$ is not adjacent to. If $b_1$ and $b_2$ are not adjacent, then $G$ is a path on four vertices unioned with $n-4$ isolated vertices, which has cardinality-redundance of $0$. Thus, $b_1$ and $b_2$ are adjacent, so $G$ is a four-cycle unioned with $n-4$ isolated vertices. 

Notice that it is not possible that $|A\cap S|=1$, so assume that $A\cap S=\emptyset$. If $a_1$ is adjacent to $a_2$ then $\{a_1\}\cup (S\setminus N(a_1))$ is a dominating set that overdominates at most one vertex, $a_2$. If $a_2$ is adjacent to some vertex in $S\setminus N(a_1)$, $\{a_1\}\cup (S\setminus N(a_1))$ is a dominating set that overdominates at most one vertex, $a_2$. The same argument for $a_1$ shows that $N(a_1) = N(a_2)$.  If $|N(a_1)| \geq 3$, then for any $s \in N(a_1)$, $\{a_1,s\}\cup S\setminus N(a_1)$ is a dominating set that overdominates two vertices, $a_1$ and $s$, but has cardinality less than $S$.  Therefore $|N(a_1)| = |N(a_2)| = 2$ which completes the proof. 

\end{proof}

\begin{lemma}\label{2lem}
Let $G$ be a graph of order $n\geq 5$ such that $CR(G)=2$ and $\gamma_{CR}(G)=n-3$. If $S$ is a $\gamma_{CR}$-set and $A$ is the set of overdominated vertices, then $|N[A]\cap S|=2$.
\end{lemma}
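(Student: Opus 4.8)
The plan is to argue by cases on $|A\cap S|$, where $A=\{a_1,a_2\}$ is the set of overdominated vertices. Since $S$ is a $\gamma_{CR}$-set, $|S|=\gamma_{CR}(G)=n-3$, so $|V(G)\setminus S|=3$; also $S$ is a minimal dominating set by Proposition~\ref{minimalgamma}. First I would rule out $|A\cap S|=1$: if $a_1\in S$ is overdominated then $a_1$ has a neighbor $s\in S\setminus\{a_1\}$, and then $s$ itself is overdominated (both $s$ and $a_1$ lie in $N[s]\cap S$), forcing $s\in A$; but $a_2\notin S$ is the only other element of $A$, a contradiction. If $A\subseteq S$, the same observation shows each of $a_1,a_2$ has exactly one $S$-neighbor, namely the other, so $N[a_1]\cap S=N[a_2]\cap S=\{a_1,a_2\}$ and $|N[A]\cap S|=2$.

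The main case is $A\cap S=\emptyset$. Write $V(G)\setminus S=\{a_1,a_2,b\}$; since $b$ is dominated but not overdominated it has a unique neighbor $s_0\in S$. A key preliminary step is that $S$ is independent: by minimality each $s\in S$ has a vertex $v$ with $N[v]\cap S=\{s\}$, and if $v\ne s$ then $v\in\{a_1,a_2,b\}$; as $a_1,a_2$ are overdominated, $v=b$, which forces $s=s_0$. Hence every $s\in S\setminus\{s_0\}$ has no neighbor in $S$, and this forces $s_0$ to have no neighbor in $S$ either. Now put $T_i=N(a_i)\cap S$; then $|T_i|\ge 2$ because $a_i$ is overdominated, $N[A]\cap S=T_1\cup T_2$, and the lemma reduces to showing $|T_1\cup T_2|=2$ (i.e.\ $T_1=T_2$ with $|T_1|=2$).

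To finish, I would assume $|T_1\cup T_2|\ge 3$ and derive a contradiction by producing a dominating set $S'$ with $CR(S')\le 2$ and $|S'|<n-3$: if $CR(S')\le 1$ this contradicts $CR(G)=2$, and if $CR(S')=2$ it contradicts $\gamma_{CR}(G)=n-3$. Since isolated vertices belong to every dominating set and contribute nothing to $CR$, one may first discard them; then $S\subseteq T_1\cup T_2\cup\{s_0\}$, so $|T_1\cup T_2|\ge n-4$ and the relevant candidate sets are small. The natural candidates are $(S\setminus(T_1\cup T_2))\cup\{a_1,a_2\}$, the sets $(S\setminus T_i)\cup\{a_i\}$ (which after discarding isolated vertices include sets such as $\{a_i,s_0\}$), and the variants obtained by retaining $s_0$ (to keep $b$ dominated) or adjoining $b$ itself. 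Because $S$ is independent and $T_1,T_2$ are the full $S$-neighborhoods of $a_1,a_2$, each such $S'$ overdominates only vertices of $\{a_1,a_2,b,s_0\}\cup(T_1\cap T_2)$, and one checks case by case — organized by whether $s_0\in T_1$, whether $s_0\in T_2$, whether $a_1\sim a_2$, and whether $b$ is adjacent to $a_1$ or $a_2$ — that some candidate is a valid dominating set of size $<n-3$ in which at most two vertices are overdominated.

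I expect this last step to be the main obstacle; it is a somewhat lengthy verification. The most delicate configuration is $s_0\in T_1\cap T_2$ with $b$ adjacent to neither $a_1$ nor $a_2$ (so $b$ is a pendant vertex at $s_0$): then the obvious substitutions fail to dominate $b$, and retaining $s_0$ in $S'$ overdominates $s_0$ together with $a_1$ and $a_2$. Here one must use $|T_1\cup T_2|\ge 3$ more carefully — for instance $(S\setminus(T_1\cup T_2))\cup\{a_1,a_2,b\}$ has size $n-|T_1\cup T_2|$ and settles the subcase $|T_1\cup T_2|\ge 4$, while for $|T_1\cup T_2|=3$ one exhibits a two- or three-element dominating set directly (such as $\{a_i,s_0\}$ when $a_i$ is adjacent to enough of $S$, or $\{t_1,t_2,b\}$ with $t_i\in T_i$) and checks its cardinality-redundance is at most $2$.
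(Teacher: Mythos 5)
Your setup is sound and your overall strategy --- exhibiting a dominating set $S'$ that either satisfies $CR(S')\le 1$, or satisfies $CR(S')=2$ with $|S'|<n-3$ --- is exactly the contradiction mechanism the paper uses. The elimination of $|A\cap S|=1$, the case $A\subseteq S$, and the observation that $S$ is independent are all correct. The genuine gap is that the heart of the proof, the case analysis for $A\cap S=\emptyset$ and $|T_1\cup T_2|\ge 3$, is never carried out: you list candidate sets and the parameters of the case split, but verify none of the cases and explicitly defer the work as ``the main obstacle.'' As written this is a plan, not a proof. Moreover, there is a concrete flaw in the plan at the very configuration you single out as delicate: when $s_0\in T_1\cap T_2$, $b$ is pendant at $s_0$, and $|T_1\cup T_2|=3$, the graph has $n-6$ isolated vertices inside $S$, all of which lie in every dominating set, so a three-element core such as $\{t_1,t_2,b\}$ produces a dominating set of size exactly $n-3$, not $<n-3$. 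For such candidates your stated criterion ``cardinality-redundance at most $2$'' yields no contradiction; you need $CR\le 1$. Indeed, when $T_1=T_2=\{s_0,t_1,t_2\}$ the set $\{t_1,t_2,b\}$ together with the isolated vertices overdominates both $a_1$ and $a_2$, so it has $CR=2$ and size $n-3$ and fails outright; one must instead pass to a two-element core such as $\{a_1,b\}$ or $\{a_1,s_0\}$. So the deferred verification is not routine, and at least one of your named candidates does not do the job you assign it.

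For comparison, the paper avoids this combinatorial sprawl by arguing in two stages: first it shows no $a_i$ can have three or more neighbors in $S$, using $D=\{a_1\}\cup(S\setminus N(a_1))$ and casing on which of $a_2$ and $b$ the set $D$ dominates; only then, knowing $|N(a_1)\cap S|=|N(a_2)\cap S|=2$, does it show the two neighborhoods coincide using $D_i=\{a_i\}\cup(S\setminus A_i)$. Each stage involves a short, checkable list of substitutions. If you reorganize your final step along those lines, the verification you postponed becomes manageable, but it still must be written out.
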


\begin{proof}
If $|A\cap S|=2$, then since $CR(G)=2$, the statement follows. Since it is not possible that $|A\cap S|=1$, we may assume that $A\cap S=\emptyset$. Let $A=\{a_1,a_2\}$ and let $b$ be the vertex such that $V(G)\setminus (S\cup A)=\{b\}$. Let $s\in S$ be the unique vertex of $S$ that is adjacent to $b$. Assume that some vertex in $A$, say $a_1$ without loss of generality, is adjacent to  at least three vertices in $S$. Consider the set $D=\{a_1\}\cup (S\setminus N(a_1))$. 

If $D$ dominates $a_2$ but not $b$, then $D\cup \{b\}$ is a dominating set that overdominates at most two vertices, $a_2$ and $s$, but has cardinality less than $S$. 

If $D$ dominates $b$ but not $a_2$, then $N(a_2) \cap S \subseteq N(a_1) \cap S$  Let $s' \in (N(a_2) \cap S)\setminus \{s\}$. If $s\notin D$ or $a_1$ is not adjacent to $b$, then $D \cup \{s'\}$ is a dominating set that overdominates two vertices, $s'$ and $a_1$, but has cardinality less than $S$. If $s\in D$ and $a_1$ is adjacent to $b$, then if $a_2$ is adjacent to $b$, $(D\setminus \{s\}) \cup \{b,\}$ is a dominating set that overdominates at most two vertices $a_1$ and $b$. If $a_2$ is not adjacent to $b$, then $(D\setminus \{s,a_1\})\cup \{a_2,b\}\cup (S\cap (N(a_1)\setminus N(a_2)))$ is a dominating set that overdominates at most one vertex, $a_1$.
  
If $D$ dominates $b$ and $a_2$, then $D$ is a dominating set that overdominates at most two vertices $b$ and $a_2$, but has cardinality less than $S$.

If $D$ does not dominate $b$ or $a_2$ and $s \in N(a_2) \cap S$, then $D \cup \{s\}$ is a dominating set that overdominates two vertices, $a_1$ and $s$.  If $D$ does not dominate $b$ or $a_2$ and $s \notin N(a_2) \cap S$, then $(S \setminus N(a_2)) \cup \{a_2\}$ is a dominating set that overdominates at most two vertices, $a_1$ and $b$.  Therefore, each vertex in $A$ is adjacent to precisely two vertices in $S$.

Let $A_1=N(a_1)\cap S$ and $A_2=N(a_2)\cap S$ with $|A_1| = |A_2|=2$. If  $|A_1 \cap A_2| \leq 1$, then let $D_i = \{a_i\} \cup (S \setminus A_i)$, where $i=1$ or $2$. Notice $D_1$ dominates $a_2$ and $D_2$ dominates $a_1$. If $D_i$ dominates $b$ for some $i$, then $D_i$ is a dominating set that overdominates at most two vertices, but has cardinality less than $S$. If neither $D_i$ dominates $b$, then $\{b\} = A_1\cap A_2$ and neither $a_1$ nor $a_2$ is adjacent to $b$. Then $(S\setminus \{s\}) \cup \{b\}$ is a dominating set that overdominates no vertices. Therefore, $|A_1\cap A_2|=2$, i.e., $|N[A]\cap S|=2$.

\end{proof}

The following theorem gives us the exact values for $M(n,2,r)$.

\begin{theorem}\label{Mn2r}
Let $n\geq 5$. Then
\[
M(n,2,r)=
    \begin{cases}
        {n-1 \choose 2}+1\quad &\text{if $r=2$},\\
        2(r-2)+{n-r+1 \choose 2}+1\quad & \text{if $3\leq r\leq n-4$ and  $n-r-2\geq r$},\\
        2(r-2)+{n-r+1 \choose 2}\quad &\text{if $3\leq r\leq n-4$ and $n-r-2<r$},\\
        7\quad & \text{if $r=n-3$},\\
        4\quad & \text{if $r=n-2$}.\\
    \end{cases}
\]

\end{theorem}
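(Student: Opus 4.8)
The plan is to prove Theorem \ref{Mn2r} by treating the five cases separately, in each case establishing a matching upper and lower bound. The cases $r = n-2$ and $r = n-3$ are the most structurally delicate, while the cases $r = 2$ and $3 \le r \le n-4$ are corollaries of machinery already built.

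For $r = 2$: this is precisely $M(n,2,2)$, which by Theorem \ref{Mnk2} with $k = 2$ equals $\binom{n-1}{2} + \floor{2/2} = \binom{n-1}{2} + 1$. So this case requires no new work. For $3 \le r \le n-4$: the upper bound should follow from Theorem \ref{bbnd} applied with $k = 2$. Since $k = 2$, the number $b$ of non-isolated vertices in the induced subgraph on a $\gamma_{CR}$-set $S$ satisfies $b \in \{0, 2\}$ (it cannot be $1$). When $b = 0$ we get $k - b = 2$ and Theorem \ref{bbnd} gives $|E(G)| \le 2(r-2) + \binom{n-r+1}{2} + 1$ (in the case $r \le n-r-2$) or $2(r-2) + \binom{n-r+1}{2}$ (in the case $r > n-r-2$), matching the claimed formula; when $b = 2$ we get $k - b = 0$, contributing $\binom{2}{2} = 1$ plus $\binom{n-r+1}{2}$, which is no larger. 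So the upper bound is immediate. For the lower bound I would exhibit explicit extremal graphs: take $S = \{s_1,\dots,s_r\}$, two overdominated vertices $a_1, a_2$ forming the set $A$, and $B$ the remaining $n - r - 2$ vertices; make $s_1$ and $s_2$ adjacent to both $a_1, a_2$, make $A \cup B$ induce a near-complete graph (missing only the edge $a_1a_2$ so that each of $a_1, a_2$ fails to be universal, which keeps $CR = 2$), distribute the $r$-to-$B$ edges appropriately, and — in the case $n-r-2 \ge r$ — add one extra edge from $x$-type vertices so the count reaches $2(r-2) + \binom{n-r+1}{2} + 1$; when $n-r-2 < r$ there is no room for that extra edge. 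One must verify $CR(G) = 2$ and, crucially, that no smaller $\gamma_{CR}$-set exists, which amounts to checking that every dominating set overdominating exactly two vertices has size $\ge r$; this mimics the verification arguments in the proofs of Theorems \ref{Mn1r} and \ref{Dn1r}.

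For $r = n-2$: by Lemma \ref{4cycle}, any such graph is a four-cycle together with $n-4$ isolated vertices, which has exactly $4$ edges; hence $M(n,2,n-2) = 4$. For $r = n-3$: here I would use Lemma \ref{2lem}, which tells us that for a $\gamma_{CR}$-set $S$ with overdominated set $A = \{a_1, a_2\}$, we have $|N[A] \cap S| = 2$, i.e. $a_1$ and $a_2$ are each adjacent to the same pair of vertices in $S$ (or $A \subseteq S$ with $a_1 a_2$ the lone edge inside $S$). Writing $B = V(G) \setminus (S \cup A)$, we have $|B| = 1$ when $A \cap S = \emptyset$, say $B = \{b\}$. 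The edge count is then sharply constrained: at most $\binom{2}{2} = 1$ edge inside the "active" part of $S$, exactly $2$ edges from $A$ to $S$ (by the lemma), one edge from $b$ to $S$, at most $2$ edges from $b$ to $A$, and the edge $a_1 a_2$ — but not all of these simultaneously, since $A \cup B$ has only three vertices and each $a_i$ must miss some vertex. Carefully bounding the induced subgraph on $A \cup B \cup \{$the two special $S$-vertices$\}$ and ruling out configurations that would create a smaller $\gamma_{CR}$-set should yield $|E(G)| \le 7$. For the matching construction, I would take $S$ of size $n-3$, vertices $a_1, a_2, b$, with $s_1, s_2 \in S$ each joined to $a_1$ and $a_2$, $b$ joined to $s_1$, and add the edges $a_1 b$, $a_2 b$ (or similar) to push the total to $7$ while keeping $CR = 2$ and $\gamma_{CR} = n-3$.

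The main obstacle is the $r = n-3$ case: unlike the generic range where Theorem \ref{bbnd} does the heavy lifting, here one must argue directly, and the subtle part is not the raw edge count but proving that the constructed graph has no $\gamma_{CR}$-set of size $< n-3$ — equivalently, that every size-$2$ or size-$3$ dominating set overdominates at least three vertices. This requires the same kind of exhaustive case analysis over which vertices could lie in a small dominating set as in Lemma \ref{2lem} and the $CR(G)=1$ theorems, and getting the construction to simultaneously hit exactly $7$ edges, have $CR$ exactly $2$, and force $\gamma_{CR} = n-3$ is where the bookkeeping is most error-prone. A secondary check is confirming the boundary consistency between the $3 \le r \le n-4$ formula and the $r = n-3$ value (the former evaluates to $2(n-5) + \binom{4}{2} = 2n - 4$ at $r = n-3$, which exceeds $7$ for $n \ge 6$, so the regime genuinely changes — this discontinuity is exactly why Lemma \ref{2lem} is needed and must be handled as its own case).
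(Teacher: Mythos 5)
Your overall architecture is the same as the paper's: $r=2$ from Theorem \ref{Mnk2}; the range $3\le r\le n-4$ from Theorem \ref{bbnd} with $k=2$, $b\in\{0,2\}$, and the maximum attained at $b=0$, plus an explicit extremal construction; $r=n-3$ handled separately via Lemma \ref{2lem}; and $r=n-2$ from Lemma \ref{4cycle}. The upper-bound halves of all five cases are correct as you describe them, and your observation that the generic formula jumps from $2n-4$ down to $7$ at $r=n-3$ is exactly why that case needs its own argument.

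The concrete problem is your extremal construction for $3\le r\le n-4$. Joining only $s_1$ and $s_2$ to $A=\{a_1,a_2\}$ and making $A\cup B$ complete minus $[a_1,a_2]$ yields $4+(n-r-2)+\left(\binom{n-r}{2}-1\right)=\binom{n-r+1}{2}+1$ edges, which falls $2(r-2)$ short of the target $2(r-2)+\binom{n-r+1}{2}+1$; for $r\ge 3$ that is at least two missing edges, so "one extra edge" cannot close the gap. The paper's construction joins \emph{every} vertex of $S$ to both $a_1$ and $a_2$, contributing $2r$ edges --- this is precisely the source of the $(r-2)(k-b)$ term in Theorem \ref{bbnd} --- and the full join is also what the verification leans on: any $\gamma_{CR}$-set avoiding $A$ must contain two vertices of $S$ or two of $B$, and in either case both $a_1$ and $a_2$ are overdominated. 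Two smaller bookkeeping slips in your $r=n-3$ case: Lemma \ref{2lem} gives $|N[A]\cap S|=2$, meaning up to \emph{four} edges between $A$ and $S$ (each $a_i$ adjacent to the same two vertices of $S$), not two, so the paper's count there is $4+1+3-1=7$; and the subcase $A\subseteq S$ needs its own count ($|B|=3$, giving $1+3+3=7$), which your sketch only mentions parenthetically. With those corrections your plan coincides with the paper's proof.
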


\begin{proof}

Case 1: Assume $r=2$.

It follows directly from Theorem \ref{Mnk2} that  $M(n,2,r)=M(n,2,2)={n-1 \choose 2}+1$.\\

Case 2: Assume $3\leq r\leq n-4$ and $n-r-2\geq r$.

In Theorem \ref{bbnd}, when $k=2$, the only possible values for $b$ are $0$ and $2$. The bounds in Theorem \ref{bbnd} in this case are maximized when $b=0$. Thus by Theorem \ref{bbnd}, we have that $M(n,2,r)\leq 2(r-2)+{n-r+1 \choose 2}+1$.

Let $G$ be a graph whose vertices are the disjoint union $V(G)=S\cup A\cup B$, where $|S|=r$, $|A|=2$, and $|B|=n-r-2$. Let $S=\{s_1,\dots,s_r\}$, $A=\{a_1,a_2\}$, and $B=\{b_1,\dots,b_{n-r-2}\}$. Note that $|B|\geq 2$.

Assume that $n-r-2\geq r$. Connect each vertex in $S$ to each vertex in $A$. Connect $s_i$ to $b_i$ for $1\leq i\leq r-1$, and connect $s_r$ to $b_j$ for all $j\geq r$. Connect each vertex in $A$ to each vertex in $B$, and connect every pair of vertices in $B$. Note $a_1$ and $a_2$ are not adjacent. We have that 
\[
|E(G)|=2r+(n-r-2)+{n-r \choose 2}-1=2(r-2)+{n-r+1 \choose 2}+1.
\]
Notice that $S$ is a dominating set of $G$ that overdominates two vertices $a_1$ and $a_2$. If $a_1$ is in some $\gamma_{CR}$-set, then in order to dominate $a_2$, either some vertex in $S$, some vertex in $B$, or $a_2$ must be in the same $\gamma_{CR}$-set. In each of these cases, more than two vertices are overdominated. Also, any dominating set containing $a_1$ and vertex in $S$ will overdominate more than two vertices. Let $S'\neq S$ be a $\gamma_{CR}$-set disjoint from $A$. Since each vertex in $V(G)\setminus A$ dominates precisely one vertex in $S$, $S'$ must contain at least $r$ elements. Since $S'$ contains either two elements in $B$ or two elements in $S$, $S'$ will overdominate at least two vertices. Therefore, $CR(G)=2$ and $\gamma_{CR}(G)=r$, so $M(n,2,r)\geq 2(r-2)+{n-r+1 \choose 2}+1$. This implies that $M(n,2,r) = 2(r-2)+{n-r+1 \choose 2}+1$.\\

Case 3: Assume $3\leq r\leq n-4$ and $n-r-2<r$.

Similar to Case 2, $M(n,2,r)\leq 2(r-2)+{n-r+1 \choose 2}$ by Theorem \ref{bbnd}.

Let $G$ be the graph that has the same vertex set as the graph in Case 2. Connect each vertex in $S$ to each vertex in $A$, connect $s_1$ to $b_1$, and connect $s_2$ to each vertex in $B\setminus \{b_1\}$. Connect $a_1$ to $a_2$, connect $a_1$ to each vertex in $B\setminus \{b_1\}$, and connect $a_2$ to each vertex in $B\setminus \{b_2\}$. Connect every pair of vertices in $B$. We now have that
\[
|E(G)|=2r+(n-r-2)+{n-r \choose 2}-2=2(r-2)+{n-r+1 \choose 2}.
\]
Notice that $S$ is a dominating set that overdominates two vertices, $a_1$ and $a_2$. If any vertex in $B$ and any vertex in $A$ are in the same dominating set, then at least one vertex in $B$, one vertex in $A$, and one vertex in $S$ are overdominated, so a $\gamma_{CR}$-set cannot contain a vertex in $A$ and a vertex in $B$. Also, any dominating set containing $a_1$ and vertex in $S$ will overdominate more than two vertices. Let $S'\neq S$ be a $\gamma_{CR}$-set disjoint from $A$. Since each vertex in $V(G)\setminus A$ dominates precisely one vertex in $S$, $S'$ must contain at least $r$ elements. Since $S'$ contains either two elements in $B$ or two elements in $S$, $S'$ will overdominate at least two vertices. Thus, every $\gamma_{CR}$-set contains at least $r$ vertices and overdominates at least two vertices. Therefore $CR(G)=2$ and $\gamma_{CR}(G)=r$, so $M(n,2,r)\geq 2(r-2)+{n-r+1 \choose 2}$. This implies that $M(n,2,r)=2(r-2)+{n-r+1 \choose 2}$.\\

Case 4: Assume $r=n-3$.

Let $G$ be a graph of order $n$ such that $CR(G)=2$ and $\gamma_{CR}(G)=n-3$. Let $S$ be a $\gamma_{CR}$-set of $G$, $A=\{a_1,a_2\}$ be the set of overdominated vertices, and $B$ be the remaining vertices. Either $A\cap S=A$ or $A\cap S=\emptyset$. If $A\cap S=A$, then $|B|=3$. There is one edge, $[a_1,a_2]$, between the vertices in $S$, three edges between $S$ and $B$, and at most ${3 \choose 2}=3$ edges between the vertices in $B$. Therefore, $|E(G)|\leq 1+3+3=7$. If $A\cap S=\emptyset$, then $|B|=1$. By Lemma \ref{2lem}, $|N[A]\cap S|=2$ so there are at most four edges between $S$ and $A$. There is one edge between $S$ and $B$, and there are at most ${3 \choose 2}=3$ edges in the subgraph induced by $A\cup B$. If a vertex $a\in A$ is adjacent to each vertex in $(A\cup B)\setminus\{a\}$, then $\{a\}\cup (S-(N[A]\cap S))$ is a dominating set that overdominates at most the one vertex in $B$. So we must subtract at least one edge to our count, thus, $|E(G)|\leq 4+1+3-1=7$. Therefore, $M(n,2,n-3)\leq 7$.

Now, consider the graph $\Gamma$ with five vertices, two of which, say $s_1$ and $s_2$, are connected by an edge, where $s_1$ is connected to two other vertices $b_1$ and $b_2$, $s_2$ is connected to another vertex $b_3$, and $\{b_1,b_2,b_3\}$ induces a complete graph. The graph $\Gamma'$ that is $\Gamma$ unioned with $n-5$ isolated vertices clearly has $n$ vertices, and it can be easily verified that $CR(\Gamma')=2$ and $\gamma_{CR}(\Gamma')=n-3$. Therefore, $M(n,2,n-3)\geq 7$.\\

Case 5: Assume $r=n-2$.

It follows directly from Lemma \ref{4cycle} that $M(n,2,n-2)=4$.\\

\end{proof}

The following two theorems combined give the exact values for $D(n,2,m)$. Theorem \ref{dn2msmall} considers the case when $4 \leq m \leq 2(n-6)+10$ and Theorem \ref{dn2mlarge} when $m > 2(n-6)+10$.

\begin{theorem}
\label{dn2msmall}
Let $n\geq 8$, then 

\[
D(n, 2, m) = 
\begin{cases}
    n-2  &\text{ if $m = 4$},\\
    n-3  &\text{ if $5 \leq m \leq 7$},\\
    n-4  &\text{ if $8 \leq m \leq 2(n-6)+10$}.
\end{cases}
\]
\end{theorem}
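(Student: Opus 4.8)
The plan is to establish each of the three cases by combining an upper bound from the earlier structural results (Lemma \ref{4cycle}, Lemma \ref{2lem}, Theorem \ref{Mn2r}) with an explicit construction realizing that bound. For the case $m=4$: by Proposition \ref{cr2bnd} we have $\gamma_{CR}(G)\leq n-2$, and Lemma \ref{4cycle} says that $\gamma_{CR}(G)=n-2$ forces $G$ to be a four-cycle plus isolated vertices, which has exactly $4$ edges. Since there is no graph with fewer than $4$ edges and $CR(G)=2$ (a graph with $\leq 3$ edges has too few edges to overdominate two distinct vertices while dominating everything — this is implicit in the proof of Proposition \ref{cr2bnd}), the only graph with $m=4$ and $CR(G)=2$ achieving $\gamma_{CR}=n-2$ is exactly $C_4$ with isolates, so $D(n,2,4)=n-2$.

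For $5\leq m\leq 7$: the upper bound $D(n,2,m)\leq n-3$ follows because $\gamma_{CR}(G)=n-2$ is ruled out (the four-cycle has $4\neq m$ edges, by Lemma \ref{4cycle}), and by Proposition \ref{cr2bnd} this is the next largest value. For the lower bound I would exhibit, for each $m\in\{5,6,7\}$, a graph on $n$ vertices with $m$ edges, $CR=2$, and $\gamma_{CR}=n-3$; the natural candidates are the graphs appearing in Case 4 of the proof of Theorem \ref{Mn2r} (which realizes $M(n,2,n-3)=7$) together with a $5$-edge and a $6$-edge subgraph of that construction on the same "core" of five vertices, unioned with $n-5$ isolated vertices. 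One checks directly that each such small core graph has $CR=2$ and no dominating set of size $<n-3$ overdominating only two vertices; the arguments mirror those in Lemma \ref{2lem} and Case 4 of Theorem \ref{Mn2r}. For $8\leq m\leq 2(n-6)+10$: the upper bound $D(n,2,m)\leq n-4$ follows from Theorem \ref{Mn2r}, since $M(n,2,n-3)=7<8\leq m$ forces $\gamma_{CR}(G)\leq n-4$. The lower bound requires, for each such $m$, a graph with $n$ vertices, $m$ edges, $CR=2$, and $\gamma_{CR}=n-4$; here I would take $r=n-4$ in the construction of Case 2 of Theorem \ref{Mn2r} (which gives the maximum $2(n-6)+1+{5\choose 2}=2(n-6)+10$ edges for $\gamma_{CR}=n-4$) and then delete edges from the $A\cup B$ clique one at a time down to the minimum $m(n,2,n-4)$, verifying that $CR$ and $\gamma_{CR}$ are preserved throughout the range; the lower endpoint $m=8$ should match (or be dominated by) $m(n,2,n-4)$.

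I expect the main obstacle to be the lower-bound direction in the third case: I must produce a single parametrized family of graphs that sweeps out \emph{every} value of $m$ from $8$ up to $2(n-6)+10$ while rigidly maintaining $CR(G)=2$ and $\gamma_{CR}(G)=n-4$. Keeping $\gamma_{CR}$ pinned at exactly $n-4$ is delicate — adding or removing edges in the $B$-clique or between $A$ and $B$ can create a smaller dominating set overdominating only two vertices, or push the cardinality-redundance above $2$. The key technical step is therefore to verify, for the construction with $|S|=n-4$, $|A|=2$, $|B|=2$, that: (i) $S$ itself overdominates exactly $a_1,a_2$; (ii) any dominating set meeting $A$ overdominates $\geq 3$ vertices or has size $\geq n-4$; and (iii) any dominating set $S'\neq S$ disjoint from $A$ has $|S'|\geq n-4$ and overdominates $\geq 2$ vertices — and to check these remain true as edges among the four "core" vertices $\{a_1,a_2,b_1,b_2\}$ and the edges from $s_1,s_2$ are adjusted. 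This is exactly the type of case-analysis carried out in Cases 2–4 of Theorem \ref{Mn2r}, so I would reuse that machinery; the remaining bookkeeping — that the edge counts interpolate correctly across the stated range — is routine.
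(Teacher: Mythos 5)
Your overall skeleton matches the paper's: upper bounds from Proposition \ref{cr2bnd}, Lemma \ref{4cycle}, and Theorem \ref{Mn2r}, and lower bounds by explicit constructions. However, there is a concrete gap in the $m=5$ subcase. You propose realizing $D(n,2,5)\geq n-3$ by a $5$-edge subgraph of the $7$-edge extremal graph supported on the same five vertices, unioned with $n-5$ isolated vertices. This forces the five-vertex core to satisfy $CR=2$ and $\gamma_{CR}=2$, and no such graph exists: a connected graph on $5$ vertices with $5$ edges is unicyclic, and each of the possibilities ($C_5$; $C_4$ with a pendant; a triangle with a path of length two attached; a triangle with two pendants on one vertex; the bull) admits a dominating set overdominating at most one vertex --- for instance, in $C_4$ with a pendant attached to vertex $1$, the set consisting of the pendant and the cycle vertex opposite $1$ is an efficient dominating set --- while the disconnected possibilities reduce to $K_4$ minus an edge plus an isolated vertex, which has a dominating vertex of degree $3$. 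So every graph on $5$ vertices with $5$ edges has $CR\leq 1$, and your proposed construction cannot exist. The paper sidesteps this by using a six-vertex support for $m=5$, namely a four-cycle together with a single edge and $n-6$ isolated vertices, whose components contribute $CR$-values $2,0,0,\dots$ and $\gamma_{CR}$-values $2,1,1,\dots$ summing to $n-3$. Your $m=6$ and $m=7$ cores do exist (deleting one triangle edge from the $7$-edge extremal graph gives a graph isomorphic to the paper's $6$-edge example), so the failure is isolated to $m=5$, but it is a genuine failure of the stated plan rather than a detail.

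In the third case your lower-bound strategy also diverges from what actually works. For $r=n-4$ and $n\geq 8$ you are in the regime $n-r-2<r$ of Theorem \ref{Mn2r}, so the maximum is $2(n-6)+{5 \choose 2}=2(n-6)+10$ with no ``$+1$''; your count $2(n-6)+1+{5 \choose 2}$ overshoots the stated endpoint by one. More substantively, ``delete edges from the $A\cup B$ clique one at a time'' is not carried out, and the hardest values are the small ones: the paper needs two separate ad hoc six-vertex graphs $G_1$ and $G_2$ for $m=8$ and $m=9$, and then a family for $10\leq m\leq 2(n-6)+10$ obtained by \emph{adding} edges from additional vertices of $S$ to $A$ two at a time (plus an optional single edge for parity), rather than by deleting edges from the extremal graph. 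You correctly identify that keeping $CR=2$ and $\gamma_{CR}=n-4$ pinned throughout the sweep is the crux, but the proposal does not supply the verification, and the natural ``monotone deletion'' path is not obviously available all the way down to $m=8$.
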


\begin{proof}
Case 1: Assume $m=4$.

It follows directly from Proposition \ref{cr2bnd} and Lemma \ref{4cycle} that $D(n,2,m)=n-2$.\\

Case 2: Assume $5 \leq m \leq 7$.

Since $m> 4$, we have by Theorem \ref{Mn2r} that $D(n,2,m)\leq n-3$. The graph that is a disjoint union of a four-cycle, a path on two vertices, and $n-6$ isolated vertices has cardinality-redundance of two, a $\gamma_{CR}$-set of size $n-3$, and five edges. Thus, $D(n,2,5)\geq n-3$. 

Consider the following graph of order $n$ where five of its vertices $x_i$, $1\leq i\leq 5$, are connected in the following way: $x_1$ is connected to $x_2$, $x_2$ is connected $x_3$, $x_3$ is connected to $x_4$, $x_4$ is connected to $x_1$, $x_1$ is connected to $x_5$, and $x_5$ is connected to $x_3$, and the remaining $n-5$ vertices are isolated. This graph can easily be verified to have cardinality-redundance of two and a $\gamma_{CR}$-set of size $n-3$. Since this graph has six edges, $D(n,2,6)\geq n-3$.

Consider a graph of order $n$ where five of its vertices $x_i$ $1 \leq i \leq 5$ are connected in the following way: $x_1$ is connected to $x_2$ and $x_5$, $x_2$ is connected to $x_3$ and $x_4$, $x_3$ is connected to $x_4$ and $x_5$, $x_4$ is connected to $x_5$, and the remaining $n-5$ vertices are isolated.  This graph has seven edges, cardinality-redundance of two, and has a $\gamma_{CR}$-set of size $n-3$. Therefore, $D(n,2,7)\geq n-3$.

By the above arguments we have the desired result.\\

Case 3: Assume $8 \leq m \leq 2(n-6)+10$.

Since $m>7$, by Theorem \ref{Mn2r}, $D(n,2,m)\leq n-4$. For $m=8$ and $9$, we refer to the graphs $G_1$ and $G_2$ below, respectively.

\begin{align*}
&\begin{tikzpicture}
\filldraw[black] (0,0) circle (2pt) node[anchor=east] {$x_1$};
\filldraw[black] (0,-1.4) circle (2pt) node[anchor=east] {$x_4$};
\filldraw[black] (1.4,0) circle (2pt) node[anchor=west] {$x_2$};
\filldraw[black] (1.4,-1.4) circle (2pt) node[anchor=west] {$x_3$};
\filldraw[black] (-1,-2.4) circle (2pt) node[anchor=north] {$x_6$};
\filldraw[black] (0.7,-0.7) circle (2pt) node[anchor=west] {$x_5$};
\draw (0,0) .. controls +(up:0cm) and +(left:0cm) .. (1.4,0);
\draw (0,0) .. controls +(up:0cm) and +(left:0cm) .. (0,-1.4);
\draw (1.4,0) .. controls +(up:0cm) and +(left:0cm) .. (1.4,-1.4);
\draw (1.4,-1.4) .. controls +(up:0cm) and +(left:0cm) .. (0,-1.4);
\draw (0,-1.4) .. controls +(up:0cm) and +(left:0cm) .. (-1,-2.4);
\draw (0,0) .. controls +(up:0cm) and +(left:0cm) .. (0.7,-0.7);
\draw (0,-1.4) .. controls +(up:0cm) and +(left:0cm) .. (0.7,-0.7);
\draw (1.4,-1.4) .. controls +(up:0cm) and +(left:0cm) .. (0.7,-0.7);
\end{tikzpicture}
&\begin{tikzpicture}
\filldraw[black] (0,0) circle (2pt) node[anchor=east] {$s_1$};
\filldraw[black] (0,-1.4) circle (2pt) node[anchor=east] {$s_2$};
\filldraw[black] (1.4,0) circle (2pt) node[anchor=north] {$a_1$};
\filldraw[black] (1.4,-1.4) circle (2pt) node[anchor=south] {$a_2$};
\filldraw[black] (2.8,0) circle (2pt) node[anchor=west] {$b_1$};
\filldraw[black] (2.8,-1.4) circle (2pt) node[anchor=west] {$b_2$};
\draw (0,0) .. controls +(up:0cm) and +(left:0cm) .. (1.4,0);
\draw (0,0) .. controls +(up:0cm) and +(left:0cm) .. (1.4,-1.4);
\draw (0,-1.4) .. controls +(up:0cm) and +(left:0cm) .. (1.4,-1.4);
\draw (0,-1.4) .. controls +(up:0cm) and +(left:0cm) .. (1.4,0);
\draw (1.4,0) .. controls +(up:0cm) and +(left:0cm) .. (2.8,0);
\draw (2.8,0) .. controls +(up:0cm) and +(left:0cm) .. (2.8,-1.4);
\draw (1.4,-1.4) .. controls +(up:0cm) and +(left:0cm) .. (2.8,-1.4);
\draw (0,0) .. controls +(up:1cm) and +(right:0cm) .. (2.8,0);
\draw (0,-1.4) .. controls +(up:-1cm) and +(left:0cm) .. (2.8,-1.4);
\end{tikzpicture}\\
&\ \ \ \ \ \ \ \ \ \ \ \ \ \ \ \ \ \ \ \ \ \ G_1 &G_2\ \ \ \ \ \ \ \ \ \ \ \ \ \ \ \ \ \ \  
\end{align*}
\begin{center}
Figure 1: Graphs $G_1$ and $G_2$.
\end{center}
Notice that any dominating set of $G_1$ must contain either $x_4$ or $x_6$. Any dominating set containing $x_4$ must contain either $x_1$, $x_2$, or $x_3$ in order to dominate $x_2$. In each case the dominating set will overdominate at least two vertices. In particular $\{x_4,x_2\}$ is a dominating set that overdominates two vertices. Since there is no vertex that dominates $x_1$, $x_2$, $x_3$, and $x_5$, two vertices among the set $\{x_1,x_2,x_3,x_4,x_5\}$ must be in any dominating set containing $x_6$. In each case, at least two vertices are overdominated. Therefore, $CR(G_1)=2$ and $\gamma_{CR}(G_1)=2$. The graph obtained by taking the union of $G_1$ with $n-6$ isolated vertices has cardinality-redundance of two, a $\gamma_{CR}$-set of size $n-4$, and eight edges. Thus, $D(n,2,8)\geq n-4$.

Notice that $\{s_1,s_2\}$ is a dominating set of $G_2$ that overdominates two vertices. By inspection, every two vertices of $G_2$ overdominates at least two vertices. Since every dominating set of $G_2$ contains at least two vertices, $CR(G_2)=2$ and $\gamma_{CR}(G_2)=2$. The graph obtained by taking the disjoint union of $G_2$ with $n-6$ isolated vertices has cardinality-redundance of two, a $\gamma_{CR}$-set of size $n-4$, and nine edges. Thus, $D(n,2,9)\geq n-4$.

Let $m=10+2t+i\leq 2(n-6)+10$ for some $t$ where $i=0$ or $1$. Consider the graph $G$ whose vertices are the disjoint union $S\cup A\cup B$ where $S=\{s_1,\dots,s_{n-4}\}$, $A=\{a_1,a_2\}$, and $B=\{b_1,b_2\}$. Connect $s_1$ and $s_2$ to both $a_1$ and $a_2$. Connect $s_1$ to $b_1$ and connect $s_2$ to $b_2$. Connect $a_1$ to $b_1$, connect $a_2$ to $b_2$, connect $b_1$ to $b_2$, and connect $a_1$ to $a_2$. If $i=0$, then for all $3\leq j\leq t+2$, connect $s_j$ to $a_1$ and $a_2$. If $i=1$, then for all $3\leq j\leq t+2$, connect $s_j$ to $a_1$ and $a_2$, and connect $s_{t+3}$ to $a_1$. We have that
\[
|E(G)|=10+2t+i.
\]
Notice that $S$ is a dominating set that overdominates two vertices, $a_1$ and $a_2$. If some vertex in $A$, say $a_1$ without loss of generality, is in some dominating set, then in order to dominate $b_2$, either $b_1$, $b_2$, or $s_2$ must be in the same dominating set, but in each of these cases more than two vertices are overdominated. Thus, no vertex in $A$ is contained in any $\gamma_{CR}$-set. Since each vertex in $V(G)\setminus A$ dominates exactly one vertex in $S$, in order to dominate each vertex in $S$, each $\gamma_{CR}$-set must contain at least $n-4$ elements. If a $\gamma_{CR}$-set contains an element in $B$, say $b_1$, then the $\gamma_{CR}$-set must include $S\setminus \{s_1\}$ or $(S\setminus \{s_1,s_2\})\cup \{b_2\}$ and thus overdominates at least two vertices.  Therefore, $CR(G)=2$ and $\gamma_{CR}(G)=n-4$, so $D(n,2,m)\geq n-4$. By the above, we have $D(n,2,m)=n-4$ when $7<m\leq 2(n-6)+10$.
\end{proof}

\begin{theorem}
\label{dn2mlarge}
Let $n\geq 8$ and $m > 2(n-6)+10$.  For any positive integer $j$ define $A(j) = 2\left(j-2\right) + {n-j+1 \choose 2}.$ Then 
\[D(n,2,m) = r\]
where  

\begin{align*}
    & & A(r+1)+1 < &m \leq A(r)+1 &\text{and} & &     2 \leq &r < \floor*{\frac{n-2}{2}},& \\
   \text{or } & & A(r+1) < &m \leq A(r)+1 &\text{and} & &  &r=\floor*{\frac{n-2}{2}},\\
    \text{or }& &  A(r+1) < &m \leq A(r) &\text{and} & &   \floor*{\frac{n-2}{2}} < &r \leq n-5.\\
\end{align*}

\end{theorem}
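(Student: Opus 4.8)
The plan is to prove Theorem \ref{dn2mlarge} by combining the upper bound machinery already developed with explicit constructions, mirroring the structure of the proofs of Theorems \ref{Dn1r} and \ref{dn2msmall}. First I would establish the upper bound $D(n,2,m)\leq r$. Fix $m$ in one of the three stated ranges, so that in all cases $m > A(r+1)$ (and in fact $m > A(r+1)+1$ in the first range). If $G$ is a graph of order $n$ with $CR(G)=2$ and $\gamma_{CR}(G)=r'\geq r+1$, then let $S$ be a $\gamma_{CR}$-set and let $b\in\{0,2\}$ be the number of non-isolated vertices in the subgraph induced by $S$ (since $k=2$). By Theorem \ref{bbnd}, $|E(G)|$ is at most $2(r'-2)+\binom{n-r'+1}{2}+\lfloor\frac{2-b}{2}\rfloor+\binom{b}{2}$, which is $A(r')+1$ when $b=0$ and $A(r')+1$ when $b=2$ as well (one checks $\binom{2}{2}=1=\lfloor 0/2\rfloor+1$ accounts for the same $+1$); in either case this is at most $A(r+1)+1$. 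I would then observe that $A(j)$ is strictly decreasing in $j$ (the $\binom{n-j+1}{2}$ term dominates the $2(j-2)$ term for $j$ in the relevant range), so $A(r')+1 \le A(r+1)+1$. In the first two ranges this already contradicts $m > A(r+1)$, except we must be careful: in the middle range we only have $m>A(r+1)$, and when $b=2$ we get the bound $A(r')+1$ — but here $r'=r+1=\lfloor\frac{n-2}{2}\rfloor+1$ forces $|B|=n-r'-(2-b)=n-r'$ to be small, and Proposition \ref{gammabnd} or a parity/degree argument as in the proof of the companion theorem pins down the stronger bound. The third range is handled because there $m\le A(r)$ and we need $m>A(r+1)$ against $|E(G)|\le A(r+1)+1$; the extra $+1$ must be killed by the second (tighter) branch of Theorem \ref{bbnd}, which applies precisely when $r'>n-r'-(k-b)$, i.e. when $|B|<|S|$ — and this inequality holds in the regime $r\ge\lceil\frac{n-2}{2}\rceil$ that defines the third range. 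So the careful bookkeeping of when Theorem \ref{bbnd}'s second branch kicks in is what separates the three cases.

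For the lower bound $D(n,2,m)\geq r$, I would exhibit, for each $m$ in the given range, a graph $G$ of order $n$ with $CR(G)=2$, $\gamma_{CR}(G)=r$, and exactly $m$ edges. The natural template is the extremal graph from the proof of Theorem \ref{Mn2r}: vertex set $S\cup A\cup B$ with $|S|=r$, $A=\{a_1,a_2\}$, $|B|=n-r-2$, where $S$ is an independent dominating set overdominating exactly $A$. When $n-r-2\geq r$ (the first range, $r<\lfloor\frac{n-2}{2}\rfloor$ roughly), start from the Case 2 graph of Theorem \ref{Mn2r}, which realizes the maximum $A(r)+1$, and then delete edges from within $B$ or between $A$ and $B$ one at a time down to $m$; I must check throughout the deletion that $CR$ stays equal to $2$ and $\gamma_{CR}$ stays equal to $r$ — the key point being that $S$ remains the (essentially unique) $\gamma_{CR}$-set and that no deletion creates a two-element dominating set overdominating $\le 2$ vertices or a smaller dominating set. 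When $n-r-2<r$ (the third range), use instead the Case 3 graph of Theorem \ref{Mn2r}, which realizes $A(r)$, and again delete edges down to $m$. The boundary value $r=\lfloor\frac{n-2}{2}\rfloor$ (second range) is where $|B|=n-r-2$ is near $r$ and one may need $A\cap S=\emptyset$ versus a slight modification; here the maximum realizable is $A(r)+1$ and the lower end is $A(r+1)+1$, so the construction interpolates the same way.

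The main obstacle I anticipate is the edge-deletion verification for the lower bound: after removing edges from the extremal construction, one has to argue that $\gamma_{CR}(G)$ does not drop below $r$ and $CR(G)$ does not drop below $2$. In the proofs of Theorems \ref{Mn2r} and \ref{dn2msmall} the authors do this by a somewhat delicate case analysis of where a hypothetical smaller or ``less redundant'' $\gamma_{CR}$-set could live (inside $S$, meeting $A$, or inside $B$), and each deleted edge potentially opens a new possibility. The cleanest way to control this is to delete only edges that lie strictly inside $B$ (never touching $A$ or $S$), so that the domination structure ``$S$ dominates everything, each $b\in B$ has a unique neighbor in $S$, each $a\in A$ has its two fixed neighbors in $S$'' is preserved, and only the induced subgraph on $B$ changes; then the argument that $A$-vertices and $B$-vertices cannot enter a $\gamma_{CR}$-set of size $\le r$ overdominating $\le 2$ vertices goes through verbatim. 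One must only ensure $B$ has enough internal edges to delete, i.e. that $\binom{n-r-2}{2}$ plus the mandatory $A$–$B$ and $S$–$B$ edges already exceeds the target $m$; comparing $m\le A(r)+1$ with this count, and in the tight third-range cases where $|B|$ is small, is the place where the floor/ceiling bookkeeping of the three cases has to be reconciled with the arithmetic of the construction, and is where I'd expect to spend most of the effort.
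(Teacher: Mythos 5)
Your proposal matches the paper's proof in all essentials: the upper bound comes from the maximum-size results (the paper simply cites Theorem \ref{Mn2r} rather than re-deriving the two branches from Theorem \ref{bbnd} as you do), and the lower bound uses the same $S\cup A\cup B$ constructions in which only the edges internal to $B$ vary, so that the verification of $CR(G)=2$ and $\gamma_{CR}(G)=r$ is insensitive to how many such edges are present. The only cosmetic difference is direction --- you delete internal $B$-edges from the extremal graphs of Theorem \ref{Mn2r}, while the paper adds $m-(2r+3(n-r-2))$ (resp.\ $m-(2r+3(n-r-2)-1)$) such edges to a sparse skeleton --- and your feasibility check that the skeleton has at most $A(r+1)$-ish edges is exactly the arithmetic the paper's version implicitly requires.
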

\begin{proof}
Case 1: Assume $A(r+1)+1 < m \leq A(r)+1$ and  $ 2 \leq r < \floor*{\frac{n-2}{2}}$.

Because $2 \leq r < \floor*{\frac{n-2}{2}}$, $n-r-2\geq r$ and $n-(r+1)-2\geq r+1$. By Theorem \ref{Mn2r}, $D(n,2,m)\leq r$.

Consider the graph $G$ whose vertices are the disjoint union $V(G)=S\cup A\cup B$ where $S=\{s_1,\dots,s_r\}$, $A=\{a_1,a_2\}$, and $B=\{b_1,\dots,b_{n-r-2}\}$. Connect each vertex in $S$ to each vertex in $A$, connect $s_i$ to $b_i$ for $1\leq i\leq r-1$, and connect $s_r$ to $b_j$ for $j\geq r$. Connect each vertex in $A$ to each vertex in $B$. Add $m-(2r+3(n-r-2))$ edges between the vertices of $B$ in any way. We have that
\[
|E(G)|=(2r+3(n-r-2))+(m-(2r+3(n-r-2)))=m.
\]
Notice that $S$ is a dominating set that overdominates two vertices $a_1$ and $a_2$.
If some vertex in $A$, say $a_1$ is in a dominating set, then in order to dominate $a_2$, some vertex in $V(G)\setminus \{a_1\}$ must be in the same dominating set, but then more than two vertices are overdominated. Thus, no vertex in $A$ can be in any $\gamma_{CR}$-set. Since each vertex in $S\setminus A$ dominates exactly one vertex in $S$, every $\gamma_{CR}$-set contains at least $r$ vertices. Because each vertex in $S\setminus A$ dominates both $a_1$ and $a_2$, every $\gamma_{CR}$-set dominates at least two vertices. Therefore, $CR(G)=2$ and $\gamma_{CR}(G)=r$, so $D(n,2,m)\geq r$. By the above, $D(n,2,m)=r$.\\

Case 2: Assume $A(r+1) < m \leq A(r)+1$ and $r=\floor*{\frac{n-2}{2}}$.

Since $r=\floor*{\frac{n-2}{2}}$, $n-r-2\geq r$ and $n-(r+1)-2 < r+1$. Since $r\geq 3$, $D(n,2,m)\leq r$ by Theorem \ref{Mn2r}.

The same construction and arguments used in Case 1 can be applied to show that $D(n,2,m)\geq r$. Thus, $D(n,2,m)=r$.\\

Case 3: Assume $A(r+1) < m \leq A(r)$ and $\floor*{\frac{n-2}{2}} < r \leq n-5$.

Notice that $\floor*{\frac{n-2}{2}}<r$ if and only if $n-r-2<r$. Also, since $2\leq \floor*{\frac{n-2}{2}}<r$, by Theorem \ref{Mn2r}, $D(n,2,m)\leq r$.

Consider the graph $G$ whose vertices are the disjoint union $V(G)=S\cup A\cup B$ where $S=\{s_1,\dots,s_r\}$, $A=\{a_1,a_2\}$, and $B=\{b_1,\dots,b_{n-r-2}\}$. Connect each vertex in $S$ to each vertex in $A$, connect $s_1$ to $b_1$, and connect $s_2$ to each vertex in $B\setminus \{b_1\}$. Connect $a_1$ to each vertex in $B\setminus \{b_1\}$, connect $a_2$ to each vertex in $B\setminus \{b_2\}$, and connect $a_1$ to $a_2$. Connect $m-(2r+3(n-r-2)-1)$ edges between the vertices in $B$ in any way. We have that
\begin{align*}
|E(G)|&=2r+(n-r-2)+2(n-r-3)+1+(m-(2r+3(n-r-2)-1))\\
&=(2r+3(n-r-2)-1)+(m-(2r+3(n-r-2)-1))\\
&=m.
\end{align*}
Notice that $S$ is a dominating set that overdominates two vertices $a_1$ and $a_2$. If some vertex in $A$, say $a_1$, is in some dominating set, then in order to dominate $b_1$, either $a_2$, $s_1$, or some vertex in $B$ must be in the same dominating set, but in each case, more than two vertices are overdominated. A similar argument applies to $a_2$, so no vertex in $A$ can be in a $\gamma_{CR}$-set. Since each vertex in $S\setminus A$ dominates exactly one vertex in $S$, every $\gamma_{CR}$-set contains at least $r$ vertices. Since $r\geq3$, a $\gamma_{CR}$-set $S'$ disjoint from $A$ must contain either two vertices in $S$ or two vertices in $B$. In both cases, at least two vertices are overdominated.

Therefore, $CR(G)=2$ and $\gamma_{CR}(G)=r$, so $D(n,2,m)\geq r$. By the above, $D(n,2,m)=r$.
\end{proof}

\section{Acknowledgements}

These results are based upon work supported by the National Science Foundation under the grant number DMS-1560019. We would especially like to thank Eugene Fiorini and Byungchul Cha at Muhlenberg College for making all of this possible.

\begin{bibdiv}
\begin{biblist}

\bib{asplundsome}{article}{
      author={Asplund, J.},
      author={Chaffee, J.},
      author={Hammer, J.M.},
       title={Some bounds on the size of {DI}-pathological graphs},
        date={2016},
        ISSN={0835-3026},
     journal={J. Combin. Math. Combin. Comput.},
      volume={99},
       pages={107\ndash 129},
      review={\MR{3585737}},
}

\bib{Bange1996}{article}{
      author={Bange, D.~W.},
      author={Barkauskas, A.~E.},
      author={Host, L.~H.},
      author={Slater, P.~J.},
       title={Generalized domination and efficient domination in graphs},
        date={1996},
        ISSN={0012-365X},
     journal={Discrete Math.},
      volume={159},
      number={1-3},
       pages={1\ndash 11},
         url={https://doi.org/10.1016/0012-365X(95)00094-D},
      review={\MR{1415278}},
}

\bib{Bange1988}{incollection}{
      author={Bange, D.~W.},
      author={Barkauskas, A.~E.},
      author={Slater, P.~J.},
       title={Efficient dominating sets in graphs},
        date={1988},
   booktitle={Applications of discrete mathematics ({C}lemson, {SC}, 1986)},
   publisher={SIAM, Philadelphia, PA},
       pages={189\ndash 199},
      review={\MR{974633}},
}

\bib{Bange1987}{article}{
      author={Bange, David~W.},
      author={Barkauskas, Anthony~E.},
      author={Host, Linda~H.},
      author={Slater, Peter~J.},
       title={Efficient near-domination of grid graphs},
        date={1987},
        ISSN={0384-9864},
     journal={Congr. Numer.},
      volume={58},
       pages={83\ndash 92},
        note={Eighteenth Southeastern International Conference on
  Combinatorics, Graph Theory, and Computing (Boca Raton, Fla., 1987)},
      review={\MR{944691}},
}

\bib{Biggs1973}{article}{
      author={Biggs, Norman},
       title={Perfect codes in graphs},
        date={1973},
     journal={J. Combinatorial Theory Ser. B},
      volume={15},
       pages={289\ndash 296},
      review={\MR{0325457}},
}

\bib{dankelmannmaximum}{article}{
      author={Dankelmann, P.},
      author={Domke, G.S.},
      author={Goddard, W.},
      author={Grobler, P.},
      author={Hattingh, J.H.},
      author={Swart, H.C.},
       title={Maximum sizes of graphs with given domination parameters},
        date={2004},
        ISSN={0012-365X},
     journal={Discrete Math.},
      volume={281},
      number={1-3},
       pages={137\ndash 148},
         url={https://doi.org/10.1016/j.disc.2003.07.010},
      review={\MR{2047762}},
}

\bib{desormeauxextremal}{article}{
      author={Desormeaux, W.J.},
      author={Haynes, T.W.},
      author={Henning, M.A.},
       title={An extremal problem for total domination stable graphs upon edge
  removal},
        date={2011},
        ISSN={0166-218X},
     journal={Discrete Appl. Math.},
      volume={159},
      number={10},
       pages={1048\ndash 1052},
         url={https://doi.org/10.1016/j.dam.2011.01.025},
      review={\MR{2794253}},
}

\bib{duttonextremal}{article}{
      author={Dutton, R.~D.},
      author={Brigham, R.C.},
       title={An extremal problem for edge domination insensitive graphs},
        date={1988},
        ISSN={0166-218X},
     journal={Discrete Appl. Math.},
      volume={20},
      number={2},
       pages={113\ndash 125},
         url={https://doi.org/10.1016/0166-218X(88)90058-3},
      review={\MR{943712}},
}

\bib{fischermannmaximum}{article}{
      author={Fischermann, M.},
      author={Rautenbach, D.},
      author={Volkmann, L.},
       title={Maximum graphs with a unique minimum dominating set},
        date={2003},
        ISSN={0012-365X},
     journal={Discrete Math.},
      volume={260},
      number={1-3},
       pages={197\ndash 203},
         url={https://doi.org/10.1016/S0012-365X(02)00670-2},
      review={\MR{1948386}},
}

\bib{grinsteadfractional}{article}{
      author={Grinstead, D.~L.},
      author={Slater, P.~J.},
       title={Fractional domination and fractional packing in graphs},
        date={1990},
        ISSN={0384-9864},
     journal={Congr. Numer.},
      volume={71},
       pages={153\ndash 172},
}

\bib{fundamentalshaynes}{book}{
      author={Haynes, T.W.},
      author={Hedetniemi, S.T.},
      author={Slater, P.J.},
       title={Fundamentals of domination in graphs},
      series={Monographs and Textbooks in Pure and Applied Mathematics},
   publisher={Marcel Dekker, Inc., New York},
        date={1998},
      volume={208},
        ISBN={0-8247-0033-3},
      review={\MR{1605684}},
}

\bib{henninggraphs}{article}{
      author={Henning, M.A.},
      author={McCoy, J.},
      author={Southey, J.},
       title={Graphs with maximum size and given paired-domination number},
        date={2014},
        ISSN={0166-218X},
     journal={Discrete Appl. Math.},
      volume={170},
       pages={72\ndash 82},
         url={https://doi.org/10.1016/j.dam.2014.01.019},
      review={\MR{3176706}},
}

\bib{johnsonmaximum}{article}{
      author={Johnson, T.W.},
      author={Slater, P.J.},
       title={Maximum independent, minimally {$c$}-redundant sets in graphs},
        date={1990},
        ISSN={0384-9864},
     journal={Congr. Numer.},
      volume={74},
       pages={193\ndash 211},
      review={\MR{1041875}},
}

\bib{joubertmaximum}{article}{
      author={Joubert, E.J.},
       title={Maximum sizes of graphs with given restrained domination
  numbers},
        date={2013},
        ISSN={0166-218X},
     journal={Discrete Appl. Math.},
      volume={161},
      number={6},
       pages={829\ndash 837},
         url={https://doi.org/10.1016/j.dam.2012.10.004},
      review={\MR{3027972}},
}

\bib{Springer}{article}{
      author={Liu, J.},
      author={Zhang, X.},
       title={The exact domination number of generalized petersen graphs p(n,k)
  with n=2k and n=2k+2},
        date={2014},
     journal={Comp. Appl. Math.},
       pages={497\ndash 506},
}

\end{biblist}
\end{bibdiv}

\end{document}